\theoremstyle{definition}
\definecolor{mygreen}{RGB}{28,172,0} 
\definecolor{mylilas}{RGB}{170,55,241}
\theoremstyle{definition}
\newtheorem{definition}{Definition}[section]
\newtheorem{proposition}{Proposition}
\newtheorem{theorem}{Theorem}
\newtheorem{lemma}{Lemma}
\newtheorem{remark}{Remark}
\declaretheoremstyle[
  bodyfont=\normalfont\itshape,
  headformat=\NAME\NUMBER  
]{nospacetheorem}
\declaretheorem[style=nospacetheorem,name=A]{assumption}
\declaretheoremstyle[
  bodyfont=\normalfont\itshape,
  headformat=\NAME\NUMBER  
]{nospacetheorem}
\declaretheorem[style=nospacetheorem,name=H]{hypothesis}
\let\oldbibliography\thebibliography
\renewcommand{\thebibliography}[1]{\oldbibliography{#1}
\setlength{\itemsep}{0pt}} 
\newcommand{\vertiii}[1]{{\left\vert\kern-0.25ex\left\vert\kern-0.25ex\left\vert #1 
    \right\vert\kern-0.25ex\right\vert\kern-0.25ex\right\vert}}
\newcommand{\footremember}[2]{%
    \footnote{#2}
    \newcounter{#1}
    \setcounter{#1}{\value{footnote}}%
}
\newcommand{\footrecall}[1]{%
    \footnotemark[\value{#1}]%
} 
\newlength{\tempdima}
\newcommand{\rowname}[1]
{\rotatebox{90}{\makebox[\tempdima][c]{\textbf{#1}}}}
\renewcommand{\thesubfigure}{\alph{subfigure}}
\newcommand{\mycaption}[1]
{\refstepcounter{subfigure}\textbf{(\thesubfigure) }{\ignorespaces #1}}
\begin{document}
\author{P.F. Antonietti\footremember{alley}{MOX, Department of Mathematics, Politecnico di Milano, Italy 
  (paola.antonietti@polimi.it, marco.verani@polimi.it)}, S. Berrone\footremember{trailer}{Department of Applied Mathematics, Politecnico di Torino, Italy
  (stefano.berrone@polito.it, martina.busetto@polito.it).}, M. Busetto\footrecall{trailer}{}, M. Verani\footrecall{alley}{}}
\title{Agglomeration-based geometric multigrid schemes for the Virtual Element Method}
\date{}
\maketitle

\begin{abstract}
In this paper we analyse the convergence properties of two-level, W-cycle and V-cycle agglomeration-based geometric multigrid schemes for the numerical solution of the linear system of equations stemming from the lowest order $C^{0}$-conforming Virtual Element discretization of two-dimensional second-order elliptic partial differential equations. The sequence of agglomerated tessellations are nested, but the corresponding multilevel virtual discrete spaces are generally non-nested thus resulting into non-nested multigrid algorithms. 
We prove the uniform convergence of the two-level method with respect to the mesh size and the uniform convergence of the W-cycle and the V-cycle multigrid algorithms with respect to the mesh size and the number of levels.   Numerical experiments confirm the theoretical findings.
\end{abstract}

\vspace{0.5cm}

\textbf{Keywords}: geometric multigrid algorithms, agglomeration, virtual element method, elliptic problems, polygonal meshes

\vspace{0.5cm}

\textbf{AMS}: 65N22, 65N30, 65N55

\section{Introduction}
The Virtual Element Method (VEM) is a very recent extension of the Finite Element Method (FEM) originally introduced in \cite{beirao2013basic} for the discretization of the Poisson problem on fairly general polytopal meshes. From its original introduction, the VEM has been applied to a variety of problems  \cite{Antonietti2022, beirao2022}.
However, the design of efficient solvers for the solution of the linear system stemming from the virtual element discretization is still a relatively unexplored field of research. So far, the few existing works in literature have mainly focused on the study of the condition number of the stiffness matrix due to either the increase in the order of the method or to the degradation of the quality of the meshes \cite{berrone2017orthogonal, mascotto2018ill} and on the development of preconditioners based on domain decomposition techniques \cite{calvo2019overlapping, calvo2018approximation, bertoluzza2017bddc, prada2017feti, dassi2020parallel}. Instead, the analysis of multigrid methods for VEM is much less developed. In particular, \cite{antonietti2018multigrid} presents the development of an efficient geometric multigrid (GMG) algorithm for the iterative solution of the linear system of equations stemming from the p-version of the Virtual Element discretization of two-dimensional Poisson problems, whereas \cite{prada2018algebraic} presents the development of an efficient algebraic multigrid (AMG) method for the solution of the system of equations related to the Virtual Element discretization of elliptic problems. To the best of our knowledge, the design and analysis of a GMG method for the h-version of the VEM  has not been investigated yet.

In this paper, hinging upon the geometric flexibility of VEM, we consider agglomerated grids and focus on the analysis of geometric multigrid methods (two-level, W-cycle, V-cycle) for the h-version of the lowest order virtual element method. It is worth noticing that the idea of exploiting the flexibility of the element shape  has been investigated in  \cite{antonietti2017multigrid, antonietti2019v}  where multigrid methods for the numerical solution of the linear system of equations stemming from the discontinuous Galerkin discretization of second-order elliptic partial differential equations have been analysed. 

Throughout this paper we mainly consider nested sequences of agglomerated meshes obtained from a fine grid of triangles by applying a recursive coarsening strategy. 
It is crucial to underline that even if the tessellations are nested, the corresponding multilevel discrete virtual element spaces are not. Therefore, our approach results into a non-nested multigrid method. A generalized framework for non-nested multilevel methods was developed by Bramble, Pasciak and Xu in \cite{bramble1991analysis} and later extended by Duan, Gao, Tan and Zhang in \cite{duan2007generalized} to analyze the non-nested V-cycle methods.  Following the so called BPX framework, we study the convergence of our method. In particular,  we prove that, under suitable assumptions on the quality of the agglomerated coarse grids, our two-level iterative method converges uniformly with respect to the granularity of the mesh for inherited and non-inherited bilinear forms. Moreover, we prove that  the W-cycle and V-cycle schemes with non-nested virtual element spaces converge uniformly with respect to the mesh size and the number of levels for both inherited and non-inherited bilinear forms. In the case of non-inherited bilinear form the W-cycle and V-cycle schemes are proved to converge provided that a sufficiently large number of smoothing steps is chosen. The theoretical results are confirmed by the numerical experiments.

The outline of the paper is as follows. In \Cref{Model_problem}  we describe the model problem and its Virtual Element discretization. In \Cref{Multigrid_algorithms} we introduce the two-level, the W-cycle and V-cycle multigrid virtual element methods. In  \Cref{Coarsening_strategy} we present the coarsening strategy adopted to construct the sequence of nested meshes, while in \Cref{Prolongation_operator} we define suitable prolongation operators that are a key ingredient in multilevel methods. In  \Cref{BPX_framework} we introduce the BPX framework for the theoretical convergence analysis of our multigrid schemes, while in \Cref{Convergence_analysis} we analyse the convergence of our virtual element multigrid algorithm and state the main theoretical results. 
In \Cref{Numerical_experiments} we present the algebraic counterpart of the algorithm focusing on the implementation details and we discuss some numerical results obtained applying the method to the numerical solution of the linear systems stemming from the h-version of the lowest order Virtual Element discretization of the Poisson equation. Finally, in \Cref{Conclusions} we draw some conclusions.

Throughout this paper, we use the notation $x \lesssim y$ and $x \gtrsim y$ instead of $x \leq C y$ and $x \geq C y$, respectively, where $C$ is a positive constant independent of the mesh size. When needed the constant will be written explicitly. Moreover, $\mathbb{P}_{l}(D)$ denotes the space of polynomials of degree less than or equal to $l \ge 1$ on the open bounded domain $D$ and $[\mathbb{P}_{l}(D)]^{2}$ the corresponding vector-valued space. 

\section{Model problem} \label{Model_problem}
Let $\Omega \subset \mathbb{R}^{2}$ be a convex polygonal domain  with Lipschitz boundary and let $f \in L^{2}(\Omega)$. We consider the following model problem: find $u \in V := H^{1}_{0}(\Omega)$ such that
\begin{equation}\label{elliptic_problem}
\mathcal{A}(u,v) = (f, v)_{L^{2}(\Omega)} \ \ \ \forall v \in V,
\end{equation} 
where $\mathcal{A}(u, v) := (\mu \nabla u, \nabla v)_{L^{2}(\Omega)}$ with $ \mu \in L^{\infty}(\Omega)$ a positive constant. This problem is well-posed and its unique solution $u \in H^{2}(\Omega)$ satisfies
\begin{equation}\label{elliptic_regularity}
\|u\|_{H^{2}(\Omega)} \lesssim \|f\|_{L^{2}(\Omega)}.
\end{equation}
For the analysis under weaker regularity assumptions see, e.g. \cite{brenner1999convergence}.

For the purposes of this work, we consider a sequence $\{\mathcal{T}_{j}\}_{j = 1}^{J}$ of tessellations of the domain $\Omega$. Therefore, all the parameters characterizing a given tessellation $\mathcal{T}_{j}$ will be denoted by the subscript $j$.
Each tessellation is made of disjoint open polytopic elements $E_{j}$ such that $\bar{\Omega} := \bigcup_{E_{j} \in \mathcal{T}_{j}} \bar{E}_{j}, \ j = 1, \dots, J$. For each element $E_{j}$, we denote by $\mathcal{E}_{E_{j}}$ the set of its edges and by $\delta_{E_j}$ its diameter. The mesh size of $\mathcal{T}_{j}$ is denoted by $\delta_j := \max_{E_{j} \in \mathcal{T}_{j}} \delta_{E_{j}}$. We assume that the elements $E_{j}$ of each tessellation $\mathcal{T}_{j}$ satisfy the following assumptions \cite{beirao2016virtual}.

\begin{assumption}\label{mesh_assumption}
For any $j = 1, \dots, J$, every element $E_{j} \in \mathcal{T}_{j}$ is the union of a finite and uniformly bounded number of star-shaped domains with respect to a disk of radius $\rho_{E_{j}} \delta_{E_{j}}$ and every edge $e_j \in \mathcal{E}_{E_{j}}$ must be such that $|e_j| \geq \rho_{E_{j}} \delta_{E_{j}}$, being $|e_{j}|$ its length. Moreover, given a sequence of tessellations $\{ \mathcal{T}_{j} \}_{j=1}^{J}$ there exists a $\rho_{0}$ independent of the tessellation such that $\rho_{E_{j}} \geq \rho_{0} > 0$.
\end{assumption}

\begin{assumption}\label{quasi_uniform_mesh}
\textit{The sequence of tessellations $\{\mathcal{T}_{j}\}_{j=0}^{J}$ are quasi-uniform, i.e., they are regular and there exists a constant $\tau > 0$ such that}
\begin{equation*}
\min_{E_{j} \in \mathcal{T}_{j}} \delta_{E_{j}} \geq \tau \delta_{j} \ \ \ \forall \delta_{j} > 0.
\end{equation*}
\textit{Moreover,  $\{\mathcal{T}_{j}\}_{j=0}^{J}$ satisfies a bounded variation hypothesis between subsequent levels, i.e., $\delta_{j-1} \lesssim \delta_{j} \leq \delta_{j-1} \  \forall j = 2, \dots, J.$}
\end{assumption}
We introduce the h-version of the enhanced Virtual Element Method and we associate to each $\mathcal{T}_{j}$ the corresponding global virtual element space $V_{j}$ of order $k=1$, constructed from the local element spaces ${V}^{E_j}$ defined on each element $E_j \in \mathcal{T}_{j}$.

We define
\begin{equation*}
\mathbb{B}_{1}(\partial E_j) := \Big \{ v \in C^{0}(\partial E_j): v_{|_{e_j}} \in \mathbb{P}_{1}(e_j) \ \ \ \forall e_j \in \mathcal{E}_{E_j} \Big \},
\end{equation*} 
and the local enhanced virtual element space ${V}^{E_j}$ of order $k=1$ as
\begin{align*}
{V}^{E_j} := \Big \{ & v \in H^{1}(E_j): v_{|_{\partial E_j}} \in \mathbb{B}_{1}(\partial E_j), \ \Delta v_{|_{E_j}} \in \mathbb{P}_{1}(E_j), \\& (v,p)_{L^{2}(E_j)} = (\Pi^{\nabla}_{1,E} v , p)_{L^{2}(E_j)} \ \ \ \forall p \in \mathbb{P}_{1}(E_j)  \Big \}.
\end{align*}
Here, $\Pi^{\nabla}_{1,E_j}: H^{1}(E_j) \to \mathbb{P}_{1}(E_j)$ is the $H^{1}(E_j)$-orthogonal operator, defined as
\begin{align*}
(\nabla \Pi^{\nabla}_{1,E_j} v, \nabla p)_{L^{2}(E_j)} &= (\nabla v, \nabla p)_{L^{2}(E_j)}   \ \ \ \ \ \forall p \in \mathbb{P}_{1}(E_j), \\
(\Pi^{\nabla}_{1,E} v, 1)_{L^{2}(\partial  E_j)} &= (v,1)_{ L^{2}(\partial E_j)}.
\end{align*}
As a basis for the local polynomial space $\mathbb{P}_1(E_j)$, we choose the set of scaled monomials defined as
\begin{equation}\label{scaled_monomials}
\mathcal{M}_{1}(E_j) := \Big \{ m \in \mathbb{P}_{1}(E_j) : m(x,y) := \frac{(x - x_{E_j})^{{\alpha}_{x}}(y - y_{E_j})^{{\alpha}_{y}}}{\delta_{E_j}^{\alpha_x + \alpha_y}}, \ 0 \leq \alpha_x + \alpha_y \leq 1 \Big \},
\end{equation}
where $(x_{E_j},y_{E_j})$ are the coordinates of the center of the disk in respect of which the element $E_{j}$ is star-shaped. We denote by $N_{1} = 3$ the dimension of the local polynomial space $\mathbb{P}_{1}(E_{j})$.

As set of degrees of freedom of the local virtual element space ${V}^{E_j}$, we choose the standard set consisting of the values of $v \in {V}^{E_j}$ at the vertices of the polygon $E_j$. We denote by $N_{dof}^{E_j}$ the total number of degrees of freedom of ${V}^{E_j}$ and by  $\mathcal{N}(E_{j})$ the set of the indices of the nodes relative to the element $E_{j} \in \mathcal{T}_{j}$. Therefore, $N_{dof}^{E_j} := \# \mathcal{N}(E_{j})$. Moreover, we denote by 
\begin{equation}\label{dof_i}
\text{dof}_{i}(v) := v(x_i) \ \ \ \forall i \in \mathcal{N}(E_{j}),
\end{equation}
the operator  returning the $i$-th degree of freedom of $v \in {V}^{E_j}$.

As basis functions for ${V}^{E_j}$, we choose the Lagrangian shape functions with respect to the degrees of freedom of the element $E_j$ , i.e., the $\varphi_{i}^{E_j}, i \in \mathcal{N}(E_{j})$ such that
$\varphi_i^{E_j}(x_l) = \delta_{li} \ \ \ \forall i, l \in \mathcal{N}(E_{j}).$
Consequently, $v \in {V}^{E_{j}}$ can be written with respect to the local VEM basis as
\begin{equation*}
v =  \sum_{i \in \mathcal{N}(E_j)} \text{dof}_{i}(v) \varphi_{i}^{E_j} =  \sum_{i \in \mathcal{N}(E_j)} v(x_i) \varphi_{i}^{E_j}.
\end{equation*}

In addition, we consider the $L^{2}(E_j)$-projection $\Pi^{0}_{1,E_j}: {V}^{E_j} \to \mathbb{P}_{1}(E_j)$ defined as
\begin{equation*}
(\Pi^{0}_{1,E_j}v, p)_{L^{2}(E_j)} = (v,p)_{L^{2}(E_j)} \ \ \ \forall p \in \mathbb{P}_{1}(E_j), 
\end{equation*}
and the projections of the derivatives $\Pi^{0}_{0,E_j} \frac{\partial}{\partial x}, \Pi^{0}_{0,E_j} \frac{\partial}{\partial y}: {V}^{E_j} \to \mathbb{P}_{0}(E_j)$ such that
\begin{align*}
\Big ( \Pi^{0}_{0,E_j} \frac{\partial v}{\partial x}, p \Big )_{L^{2}(E_j)} &= \Big ( \frac{\partial v}{\partial x}, p \Big )_{L^{2}(E_j)} \ \ \ \forall p \in \mathbb{P}_{0}(E_j), \ \forall v \in {V}^{E_j},\\
\Big ( \Pi^{0}_{0,E_j} \frac{\partial v}{\partial y}, p \Big )_{L^{2}(E_j)} &= \Big ( \frac{\partial v}{\partial y}, p \Big )_{L^{2}(E_j)} \ \ \ \forall p \in \mathbb{P}_{0}(E_j),\  \forall v \in {V}^{E_j}.
\end{align*}
We denote by $\Pi^{0}_{0,E_j} \nabla v$ the vector having $\Pi^{0}_{0,E_j} \frac{\partial v}{\partial x}$ and $\Pi^{0}_{0, E_j} \frac{\partial v}{\partial y}$ as components.

We recall the following result reported in \cite{beirao2016virtual}.

\begin{lemma}\label{lemma_projector}
For all $E_{j} \in \mathcal{T}_{j}$ and all smooth enough functions $u$ on $E_{j}$, it holds
\begin{equation}\label{lemma_projector_equation}
\| u - \Pi^{0}_{1, E_{j}} u \|_{L^{2}(E_{j})} \lesssim \delta_{j}^{s} |u|_{H^{s}(E_j)} \ \ \ s \in \mathbb{N}, \ \ \ s = \{1,2\},
\end{equation}
where the hidden constant depends on $\rho_{0}$ defined as in Assumption A\ref{mesh_assumption}.
\end{lemma}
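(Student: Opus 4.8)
The plan is to exploit the optimality of the $L^{2}$-orthogonal projection together with a Bramble--Hilbert (Dupont--Scott) polynomial approximation estimate on star-shaped domains. Since $\Pi^{0}_{1,E_{j}}$ is the orthogonal projection onto $\mathbb{P}_{1}(E_{j})$ with respect to the $L^{2}(E_{j})$ inner product, it is the best $L^{2}(E_{j})$ approximation of $u$ among all polynomials of degree at most one; in particular, for every $q \in \mathbb{P}_{s-1}(E_{j}) \subseteq \mathbb{P}_{1}(E_{j})$ (which holds for $s \in \{1,2\}$) one has
\[
\| u - \Pi^{0}_{1,E_{j}} u \|_{L^{2}(E_{j})} \leq \| u - q \|_{L^{2}(E_{j})}.
\]
It therefore suffices to exhibit a single polynomial $q$ of degree $s-1$ realising the claimed rate.

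To this end I would invoke the classical polynomial approximation theory on domains that are star-shaped with respect to a ball: there exists $q \in \mathbb{P}_{s-1}(E_{j})$ such that
\[
\| u - q \|_{L^{2}(E_{j})} \lesssim \delta_{E_{j}}^{s} \, |u|_{H^{s}(E_{j})}, \qquad s \in \{1,2\},
\]
where the hidden constant depends only on the chunkiness parameter of $E_{j}$. Concretely, for $s=1$ one may take $q$ to be the mean value of $u$ over $E_{j}$ and invoke a Poincar\'e inequality, while for $s=2$ one takes $q$ to be the averaged Taylor polynomial of degree one. The explicit power $\delta_{E_{j}}^{s}$ is obtained by rescaling $E_{j}$ to a reference configuration of unit diameter, applying the estimate there, and scaling back, the $s$ derivatives in the seminorm producing exactly this factor.

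Combining the two displays and using $\delta_{E_{j}} \leq \delta_{j}$ yields the stated bound. The essential point to verify---and the reason Assumption~A\ref{mesh_assumption} is invoked---is that the Bramble--Hilbert constant is \emph{uniform} over all elements and all levels. Since each $E_{j}$ is the union of a finite, uniformly bounded number of subdomains star-shaped with respect to disks of radius $\rho_{E_{j}} \delta_{E_{j}}$ with $\rho_{E_{j}} \geq \rho_{0} > 0$, its chunkiness parameter is controlled in terms of $\rho_{0}$ alone; hence the approximation constant, and therefore the hidden constant in the lemma, depends only on $\rho_{0}$ and not on $\delta_{j}$ nor on the particular element. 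The only mild obstacle is the bookkeeping required to pass from the star-shaped pieces to the whole element $E_{j}$, which is absorbed by the uniform bound on the number of such pieces.
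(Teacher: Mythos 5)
Your argument is correct and is the standard one: best-approximation property of the $L^{2}$-orthogonal projection combined with the Dupont--Scott/Bramble--Hilbert estimate on finite unions of uniformly star-shaped domains, with uniformity of the constant coming from Assumption A\ref{mesh_assumption}. The paper itself offers no proof---it simply recalls the lemma from \cite{beirao2016virtual}---and your reasoning is precisely the argument underlying that cited result, so there is nothing to compare beyond noting that the ``bookkeeping'' for passing from the star-shaped pieces to the whole element is the overlapping-chain argument of Dupont and Scott, which is where the uniform bound on the number of pieces is genuinely used.
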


The global virtual element space $V_{j}$ is defined as 
\begin{equation}\label{global_VEM}
V_{j} := \{ v \in H^{1}_{0}(\Omega): {v}_{|_{E_j}} \in {V}^{E_j} \ \ \ \forall E_j \in \mathcal{T}_{j} \}, \ \ \ j = 1, \dots, J.
\end{equation}

Its set of degrees of freedom can be defined similarly as done for the local space. We denote by $N_{dof}^{j}$ the total number of degrees of freedom of $V_{j}$ and by $\mathcal{N}(\mathcal{T}_{j}): = \cup_{E_{j} \in \mathcal{T}_{j}} \mathcal{N}(E_{j})$ the set of the indices of all the nodes of all the elements $E_{j}$ of the tessellation $ \mathcal{T}_{j}$ (excluding the nodes on the boundary of the domain $\partial \Omega$). Therefore, $N_{dof}^{j} := \# \mathcal{N}(\mathcal{T}_{j})$.

Similarly to the local space, we choose the Lagrangian set $\varphi_i^{j}, \ i \in \mathcal{N}(\mathcal{T}_{j})$ with respect to the global degrees of freedom as basis functions of ${V}_j$.  Consequently, $v \in V_{j}$ can be written with respect to the global VEM basis functions as
\begin{equation*}
v =  \sum_{i \in \mathcal{N}(\mathcal{T}_{j})} \text{dof}_{i}(v) \varphi_{i}^{j} =  \sum_{i \in \mathcal{N}(\mathcal{T}_{j})} v(x_i) \varphi_{i}^{j}.
\end{equation*}
We point out that $\varphi^{j}_{i|_{E_{j}}} = \varphi_{i}^{E_{j}}$, with $\varphi_{i}^{E_{j}}$ defined as above.

The VEM for the approximate solution of our model problem on the finest level grid $J$ is: find $u_{J} \in V_{J}$ such that 
\begin{equation}\label{PoissonVEM}
\mathcal{A}_{J}(u_{J}, v_{J}) = \langle f, v_{J} \rangle \ \ \ \forall v_{J} \in V_{J}.
\end{equation}
The bilinear form $\mathcal{A}_{J}(\cdot, \cdot)$ in \eqref{PoissonVEM} is defined as
\begin{equation}\label{form_A}
\begin{aligned}
\mathcal{A}_{J}(u_{J},v_{J}) &:= \sum_{E_J \in \mathcal{T}_J} \mathcal{A}^{E_J}_{J}(u_{J}, v_J) := \sum_{E_J \in \mathcal{T}_J} \Big [ (\mu \Pi^{0}_{0,E_J} \nabla u_J, \Pi^{0}_{0,E_J} \nabla v_J)_{L^{2}(E_J)}  \\
&+  \| \mu\|_{L^{\infty}(E_J)} S^{E_J} \Big ((I - \Pi^{\nabla}_{1,E_J} ) u_J,  (I - \Pi^{\nabla}_{1,E_J}) v_J \Big ) \Big ], 
\end{aligned}
\end{equation}
and the right-hand side $\langle f, v_{J} \rangle$ is defined as 
\begin{equation}\label{right_hand_side}
\langle f, v_{J} \rangle := \sum_{E_J \in \mathcal{T}_J} (f, \Pi^{0}_{0, E_{J}} v_{J})_{L^{2}(E_{J})}.
\end{equation}

For the stabilization form $S^{E_J}$ in \eqref{form_A} we consider the scalar product of the vectors of degrees of freedom of the two functions
\begin{equation*}
\begin{aligned}
& S^{E_J}((I - \Pi^{\nabla}_{1,E_J}) u_J, (I - \Pi^{\nabla}_{1,E_J}) v_J) 
 :=  \sum_{i \in \mathcal{N}(E_{J})} \text{dof}_i \Big ((I - \Pi^{\nabla}_{1,E_J}) u_J \Big ) \  \text{dof}_i \Big ((I - \Pi^{\nabla}_{1,E_J}) v_J \Big),
\end{aligned}
\end{equation*}
where $\text{dof}_{i}(\cdot)$ is defined as in \cref{dof_i}.

\section{Multigrid algorithms}\label{Multigrid_algorithms}
In this section we introduce the h-multigrid two-level, W-cycle and V-cycle  schemes to solve the VEM discrete formulation \eqref{PoissonVEM}.

Let $V_{j}, \ j = 1, \dots, J$, be the sequence of finite-dimensional virtual element spaces defined in \eqref{global_VEM}. In order to define the multigrid cycle, we introduce the following intergrid transfer operators. The prolongation operator (see \Cref{Prolongation_operator}) connecting the coarser space $V_{j-1}$ to the finer space $V_{j}, \ j = 2, \dots, J$, is denoted by $I^{j}_{j-1} : V_{j-1} \to V_{j}$, whereas the restriction operator $I^{j-1}_{j} : V_{j} \to V_{j-1}$ connecting the finer space $V_{j}$ to the coarser space $V_{j-1}, \ j = 2, \dots, J$, is defined as the adjoint of $I^{j}_{j-1}$ with respect to the inner product $(\cdot, \cdot)_{j}$, i.e.,
\begin{equation*}
(I^{j-1}_{j} w_{j}, v_{j-1})_{j-1} = (w_{j}, I^{j}_{j-1} v_{j-1})_{j}  \ \ \ \forall v_{j-1} \in V_{j-1},
\end{equation*}
where $(\cdot, \cdot)_{j}$ is the $L^{2}$ scalar product on $V_{j}, \ j = 1, \dots, J$.

Let $\mathcal{A}_{J}(\cdot, \cdot)$ be the symmetric positive definite discrete bilinear form defined as in \eqref{form_A}. On each level $j-1$, with $j = 2, \dots, J$, we define the symmetric and positive definite bilinear form $\mathcal{A}_{j-1}(\cdot, \cdot) : V_{j-1} \times V_{j-1} \to \mathbb{R}$ as follows.
\begin{definition}[Inherited and non-inherited bilinear forms]\label{inherited_non_inherited_bilinear_form}
The inherited bilinear form $\mathcal{A}_{j-1}(\cdot, \cdot)$ is defined as
\begin{equation*}
\mathcal{A}_{j-1}(u,v) := \mathcal{A}_{j}(I_{j-1}^{j} u, I_{j-1}^{j} v) \ \ \ \forall  u, v \in  V_{j-1}, \   \  j = 2, \dots, J.
\end{equation*}
The non-inherited bilinear form $\mathcal{A}_{j-1}(\cdot, \cdot)$ is defined as in \eqref{form_A}  but on the level $j-1$.
\end{definition}


We also introduce the operators $A_{j} : V_{j} \to V_{j},$ defined as
\begin{equation}\label{operator_A}
(A_j w, v)_{j} = \mathcal{A}_j(w,v) \ \ \ \forall w , v \in V_j, \ \ \ j = 1, \dots, J.
\end{equation}

For the theoretical analysis, we also need the operator $P_{j}^{j-1} : V_{j} \to V_{j-1}$ for $j = 2, \dots, J$, defined as
\begin{equation*}
\mathcal{A}_{j-1} (P^{j-1}_{j} w_{j}, v_{j-1}) = \mathcal{A}_j (w_{j}, I^{j}_{j-1} v_{j-1}) \ \ \ \forall v_{j-1} \in V_{j-1}, \ w_{j} \in V_{j}.
\end{equation*}

As a smoothing scheme, we choose the symmetric Gauss-Seidel method. However, we point out that other smoothing schemes can be selected. 
We denote by $R_{j} : V_{j} \to V_{j}$ the linear smoothing operator and by $R^{T}_{j}$ the adjoint operator of $R_{j}$ with respect to the selected inner product $(\cdot, \cdot)_{j}$. We set $R^{(l)}_{j}$ equals to $R_{j}$  if $l$ is odd and $R^{T}_{j}$ if $l$ is even.
                 
Now, we are ready to introduce the multigrid method \cite{bramble1991analysis}. We denote by $\nu$ the number of smoothing steps. Then, at the level $j$ with $j = 1, \dots, J$, the multigrid operator $B_{j}: V_{j} \to V_{j}$ is defined by induction in the following way. We set $B_1 := A_1^{-1}$ and given an initial iterate $x^{0}$, we define $B_{j} g \in V_{j}$ for $g \in V_{j}$ as in \cref{multigrid_algorithm}.

\begin{algorithm}
\footnotesize
\caption{Multigrid algorithm (MG) \ \ \ $B_{j} g = \text{MG}(p, j,g,x^{0},\nu)$}\label{multigrid_algorithm}
\begin{enumerate}
\item Set $q^{0} = 0$.
\item Define $x^{l}$ for $l = 1, \dots, \nu$ by
\begin{equation*}
x^{l} = x^{l-1} + R_{j}^{(l+\nu)}(g-A_{j}x^{l-1}).
\end{equation*}
\item Set $r_{j-1} = I^{j-1}_{j}(g - A_{j} x^{\nu})$
\item Define $q^{i}$ for $i = 1, \dots, p$ by
\begin{equation*}
q^{i} = \text{MG}(p,j-1,r_{j-1},q^{i-1}, \nu)
\end{equation*}
\item Set $y^{\nu} = x^{\nu} + I_{j-1}^{j} q^{p}$
\item Define $y^{l}$ for $l = \nu + 1, \dots, 2 \nu$ by
\begin{equation*}
y^{l} = y^{l-1} + R_{j}^{(l+\nu)}(g -A_{j} y^{l-1}).
\end{equation*}
\item Set $B_{j} g = y^{2 \nu}$.
\end{enumerate}
\end{algorithm}

The quantity $p$ is assumed to be a positive integer. We focus on the cases $p=1$ and $p=2$ that correspond to the symmetric $V$-cycle and the symmetric $W$-cycle, respectively. We underline that in Step 2 of the algorithm, we alternate between $R_{j}$ and $R_{j}^{T}$, whereas in Step 4, we use their adjoints applied in the reverse order. 

Furthermore, we introduce the following notation that will be useful in the convergence analysis.  We set ${K}_{j} := I - R_{j} A_{j}$, where $I$ is the identity operator, and we define its adjoint with respect to $\mathcal{A}_j(\cdot,\cdot)$ as ${K}_{j}^{*} := I - R^{T}_{j}A_{j}$. Moreover, we set
\begin{equation*}
\tilde{K}_{j}^{(\nu)} := 
\begin{cases}
(K^{*}_{j} K_{j})^{\frac{\nu}{2}} \ \ \ \ \ \ \ \ \ \text{if  $\nu$ is even}, \\
(K^{*}_{j} K_{j})^{\frac{\nu -1}{2}} K^{*}_{j} \ \ \text{if $\nu$ is odd}.
\end{cases}
\end{equation*} 

It can be proved (see \cite{bramble1987new}) that the following fundamental recursive relation for the multigrid operators $B_{j}$ introduced above holds true for $\ j = 2, \dots, J$
\begin{equation}\label{error_propagator_operator}
I - B_{j} A_{j} = (\tilde{K}_{j}^{(\nu)})^{*} [(I - I_{j-1}^{j} P^{j-1}_{j}) + I_{j-1}^{j}(I - B_{j-1} A_{j-1})^{p} P_{j}^{j-1}] \tilde{K}_{j}^{(\nu)}.
\end{equation}
The quantity $I - B_{j} A_{j}$ is known as the error propagation operator.

\section{Coarsening strategy} \label{Coarsening_strategy}
In this section, we describe the construction of the sequence of tessellations $\{\mathcal{T}_{j}\}_{j=1}^{J}$ by means of an agglomeration strategy.
Given the open bounded connected domain $\Omega \subset \mathbb{R}^{2}$, we introduce a tessellation $\mathcal{T}_{J}$ of triangular elements $E_{J}$ having characteristic mesh size $\delta_{J}$. Starting from this tessellation $\mathcal{T}_{J}$, by agglomeration we generate a sequence of coarser nested meshes $\{ \mathcal{T}_{j} \}_{j = 1}^{J}$, where $j$ refers to the level of the agglomeration process. For instance, $j=J-1$, denotes the mesh at level $J-1$, i.e. the mesh $\mathcal{T}_{J-1}$ generated by the agglomeration of the mesh $\mathcal{T}_{J}$. Examples of coarsening strategy are reported in Figure \ref{Meshes}, each column is obtained by the coarsening strategy.


The elements of each mesh $\mathcal{T}_{j}, \ j = 1, \dots,J$ can be expressed as the union of the triangular elements of the original fine mesh $\mathcal{T}_{J}$. More formally, each mesh $\mathcal{T}_{j}$ satisfies the following requirements.
\begin{enumerate}
\item $\mathcal{T}_{j-1}$ represents a disjoint partition of $\Omega$ into elements obtained by a suitable cluster of elements of the mesh $\mathcal{T}_{j}$.
\item Each element $E_{j-1} \in \mathcal{T}_{j-1}$ is an open bounded connected subset of the domain $\Omega$ and it is possible to find  a set $\mathcal{T}_{E_{j-1}} \subset \mathcal{T}_{j}$ such that $\bar{E}_{j-1} =  \bigcup_{E_j \in \mathcal{T}_{E_{j-1}}}\bar{E}_{j}.$
\item For every open polytopic element $E_{j} \in \mathcal{T}_{j}$ there exists $\mathcal{T}_{E_{j}}^{J} \subset \mathcal{T}_{J}$ such that $\bar{E}_{j} = \bigcup_{E_J \in \mathcal{T}_{E_{j}}^{J}} \bar{E}_{J}.$
\end{enumerate}

\begin{remark}
Given a fine-level tessellation $\mathcal{T}_{J}$ consisting of uniformly star-shaped triangular elements, a finite number of agglomeration steps  will produce a sequence of tessellations such that every element $E_{j} \in \mathcal{T}_{j}, \ j = 1, \dots, J-1$, satisfies the above requirements and it is the union of a finite and uniformly bounded number of star-shaped domains with respect to a disk of radius $\rho_{E_{j}} \delta_{E_{j}}$ as required by Assumption A\ref{mesh_assumption}. In particular, we can select the $\rho_{0}$ in Assumption A\ref{mesh_assumption} to be the infimum of the values achieved by $\rho_{E_{j}}$ on all the considered tessellations $\mathcal{T}_{j}$.

As explained, the coarse tessellation $\mathcal{T}_{j-1}$ is obtained by agglomeration of the fine tessellation $\mathcal{T}_{j}$ and, in practice, each $E_{j-1}$ will be given by the bounded union of elements $E_{j} \in \mathcal{T}_{j}$. Consequently, in practical applications, the bounded variation hypothesis $\delta_{j-1} \lesssim \delta_{j} \leq \delta_{j-1}, \  j = 2, \dots, J$ in Assumption A\ref{quasi_uniform_mesh} is usually satisfied by construction.
\end{remark}

\begin{remark}
In general, what follows applies also to other nested meshes satisfying the following boundary compatibility condition, i.e, the edges of the element $E_{j} \in \tilde{E}_{j}^{j-1}$ that lie on the boundary of the element $E_{j-1}$ share the same nodes of the element $E_{j-1}$.
In Figure \ref{compatible_non_compatible}, we report an example of nested elements that satisfy and that do not satisfy the boundary compatibility condition. 
\end{remark}

Since the coarse level $\mathcal{T}_{j-1}, \ j  = 2, \dots, J$, is obtained by agglomeration from $\mathcal{T}_{j}$, the  partitions $\{\mathcal{T}_{j}\}_{j=1}^{J}$ are nested and this is of fundamental importance for the theoretical analysis that we will perform. We underline that even if the partitions satisfies a nestedness property, in general  the finite-dimensional spaces $\{V_{j}\}_{j=1}^{J}$ are non-nested. Indeed, $V_{j-1} \not\subset V_j, \  j = 2, \dots, J$. Consequently, the analysis of the proposed method will make use of the general framework of non-nested multigrid methods.

\begin{figure}
\centering
\captionsetup{justification=centering}
\subfloat[][Admissible nested elements.]{
\includegraphics[width=0.35\textwidth]{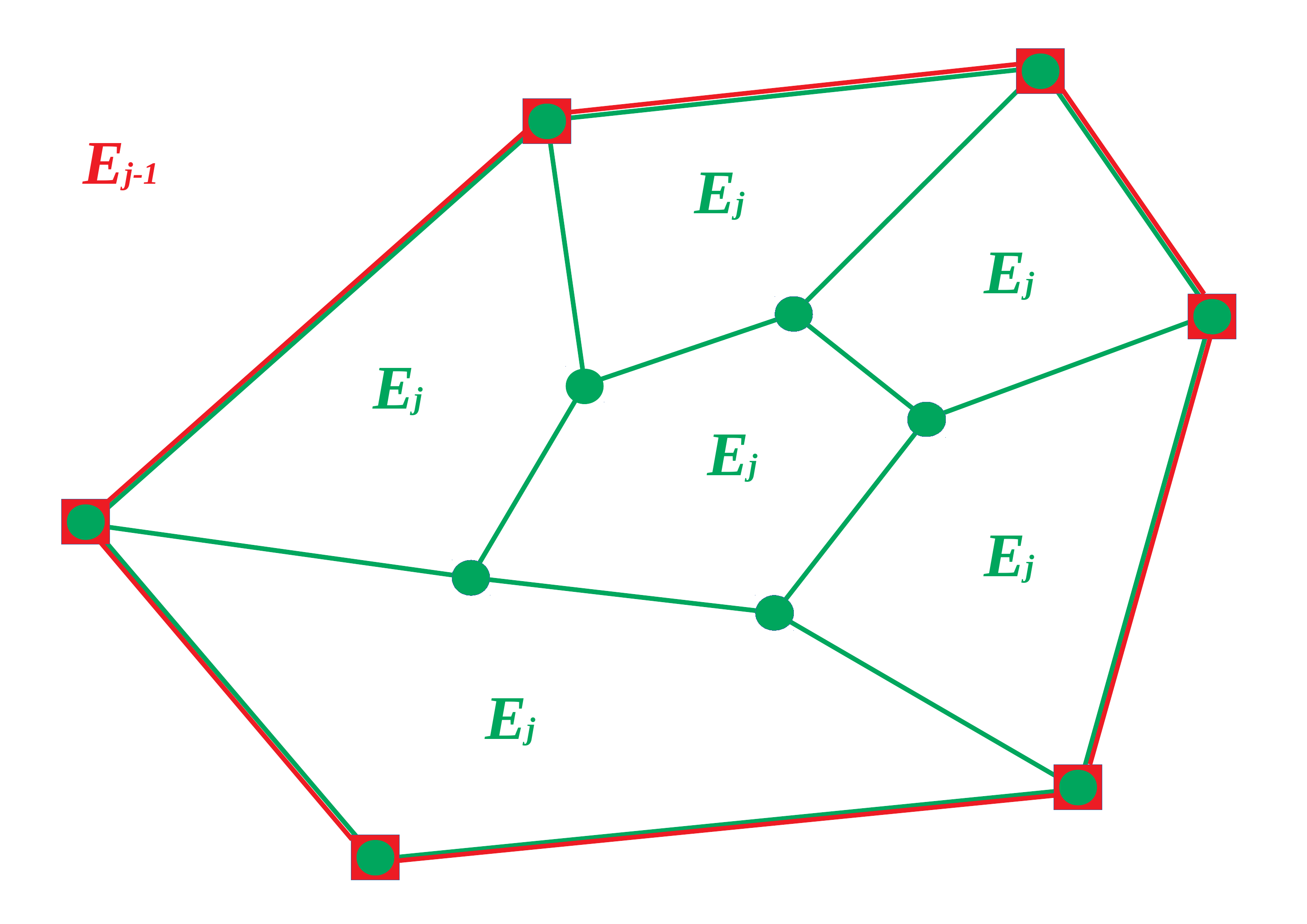}}
\subfloat[][Non admissible nested elements.]{
\includegraphics[width=0.35\textwidth]{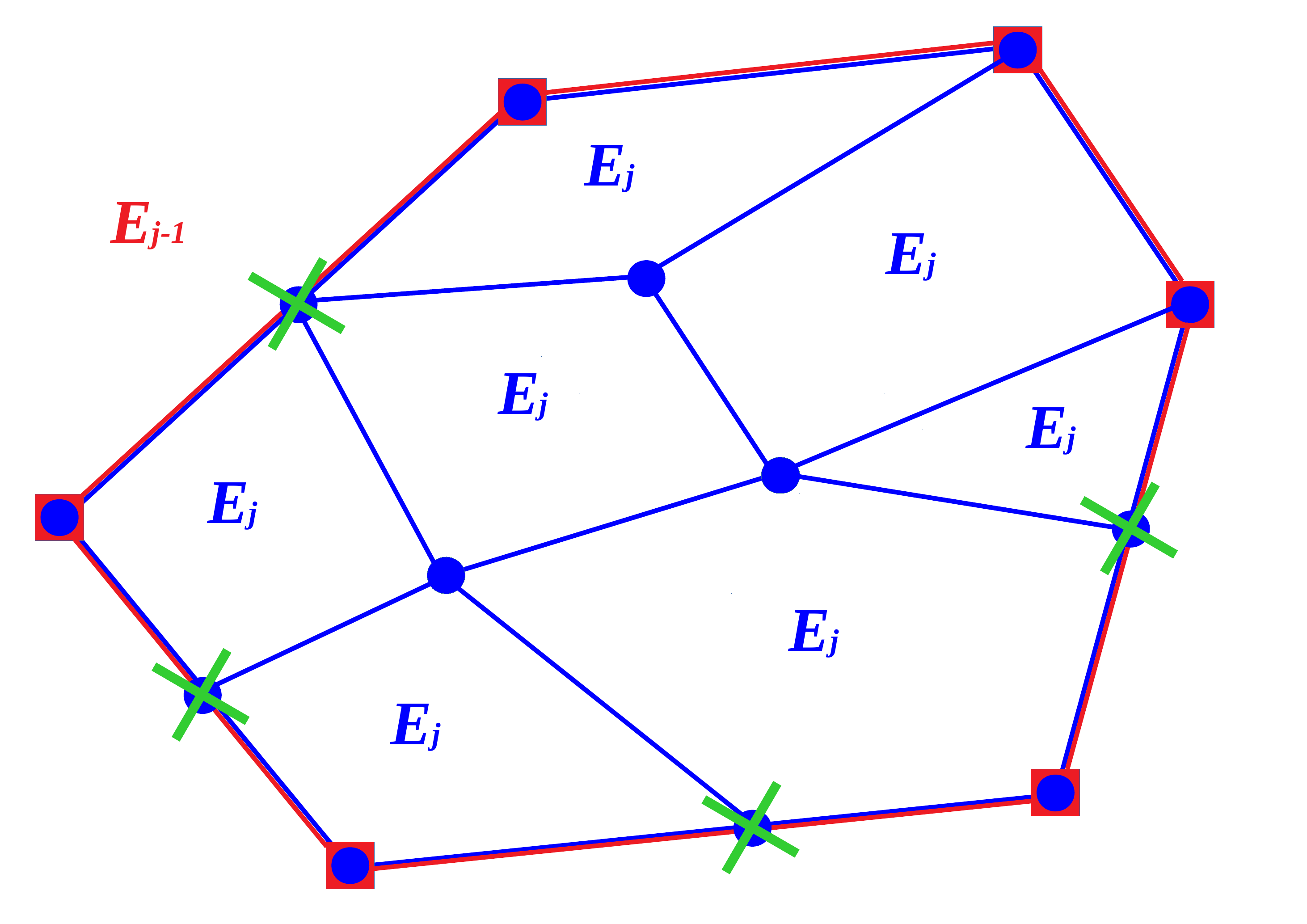}}
\caption{Example of (a) admissible nested elements   and (b) non-admissible nested elements. Circles and squares represent the nodes of $E_j$ and $E_{j-1}$, respectively. The green cross markers in (b) highlight nodes violating the boundary compatibility condition.}
\label{compatible_non_compatible}
\end{figure}

\section{Prolongation operator}\label{Prolongation_operator}
We underline that since in general $V_{j-1} \not\subset V_j$, the prolongation operator $I^{j}_{j-1}$ cannot be chosen as the classical injection operator.
In order to define the prolongation operator $I^{j}_{j-1}: V_{j-1} \to V_{j}$, we introduce the following notation. 

Let $\mathcal{T}_{E_{j-1}}$ the subset of $\mathcal{T}_{j}$  made of elements $E_{j} \in \mathcal{T}_{j}$ introduced in \Cref{Coarsening_strategy}, i.e.,  $\mathcal{T}_{E_{j-1}} := \bigcup_{\{ E_{j} \in \mathcal{T}_{j} : \  E_{j} \subset E_{j-1}\}} E_{j}.$ 
We introduce the  virtual element space $V_{j}^{E_{j-1}}$ given by a patch of local virtual element spaces ${V}^{E_{j}}$ where $E_{j}\in \mathcal{T}_{E_{j-1}}$, i.e.,
\begin{equation*}
V_{j}^{E_{j-1}} := \{ u \in H^{1}(E_{j-1}) \cap C^{0}(E_{j-1}): u_{{|_{E_{j}}}} \in {V}^{E_{j}}, \ E_{j} \in \mathcal{T}_{E_{j-1}} \}.
\end{equation*}

We denote by $\mathcal{N}(\mathcal{T}_{E_{j-1}}) := \cup_{E_{j} \in \mathcal{T}_{E_{j-1}}} \mathcal{N}(E_{j})$ the set of the indices of the nodes of all the elements $E_{j} \in \mathcal{T}_{E_{j-1}}$ and, finally, by $\mathcal{N}(\mathcal{T}_{E_{j-1}} \backslash E_{j-1}) := \mathcal{N}(\mathcal{T}_{E_{j-1}}) \backslash \mathcal{N}(E_{j-1})$ the set of the indices of the  nodes that belong to the elements $E_{j} \in \mathcal{T}_{E_{j}}$, but not to the element $E_{j-1}$. In Figure \ref{nodes_representation}, we provide a graphic example of the  different sets of nodes.

We choose $I^{j}_{j-1}$ as the operator locally defined as
\begin{equation}\label{prolongation_operator}
\begin{aligned}
& I^{j}_{j-1} u_{{j-1}_{|_{E_{j-1}}}} :=  \!\!
\sum_{i \in \mathcal{N}(E_{j-1})} \!\! \text{dof}_{i} (u_{j-1}) \varphi^{E_{j}}_{i}   + \!\! \sum_{i \in \mathcal{N}(\mathcal{T}_{E_{j-1}} \backslash E_{j-1})} \!\! \text{dof}_{i} (\Pi^{0}_{1,E_{j-1}} u_{j-1}) \varphi^{E_j}_{i},
\end{aligned}
\end{equation}
with $I^{j}_{j-1} u_{{j-1}_{|_{E_{j-1}}}} \in V_{j}^{E_{j-1}}$.

To better clarify the local construction of the prolongation operator, let us consider the example shown in Figure \ref{nodes_representation}. In this picture the coarse element  $E_{j-1}$ consists of six elements $E_{j}$.  Given the VEM function $u_{j-1} \in V_{j-1}$ restricted to the element $E_{j-1}$, i.e., $u_{{j-1}_{|_{E_{j-1}}}} \in V^{E_{j-1}}$, the prolongation operator gives the VEM function  $I^{j}_{j-1} u_{{j-1}_{|_{E_{j-1}}}} \in V_{j}^{E_{j-1}}$. As $I^{j}_{j-1} u_{{j-1}_{|_{E_{j-1}}}}$ is a VEM function on $ V_{j}^{E_{j-1}}$, then it is also a VEM function of the local virtual element space $V^{E_{j}}$ defined on each of the six elements $E_{j}$. Therefore, it is locally defined as the linear combination of the local VEM basis functions $\varphi^{E_j}_{i}, \ i \in \mathcal{N}(\mathcal{T}_{E_{j-1}})$. As coefficients of the linear combination we select the values assumed by $u_{j-1}$ in the nodes $x_{i}, i \in \mathcal{N}(E_{j-1})$ (squared nodes) and the values assumed by its local polynomial projection $\Pi^{0}_{1,E_{j-1}} u_{j-1}$ in the nodes $x_{i}, i \in \mathcal{N}(\mathcal{T}_{E_{j-1}} \backslash E_{j-1})$ (circular nodes).

\begin{figure}
\centering
\includegraphics[width=5cm]{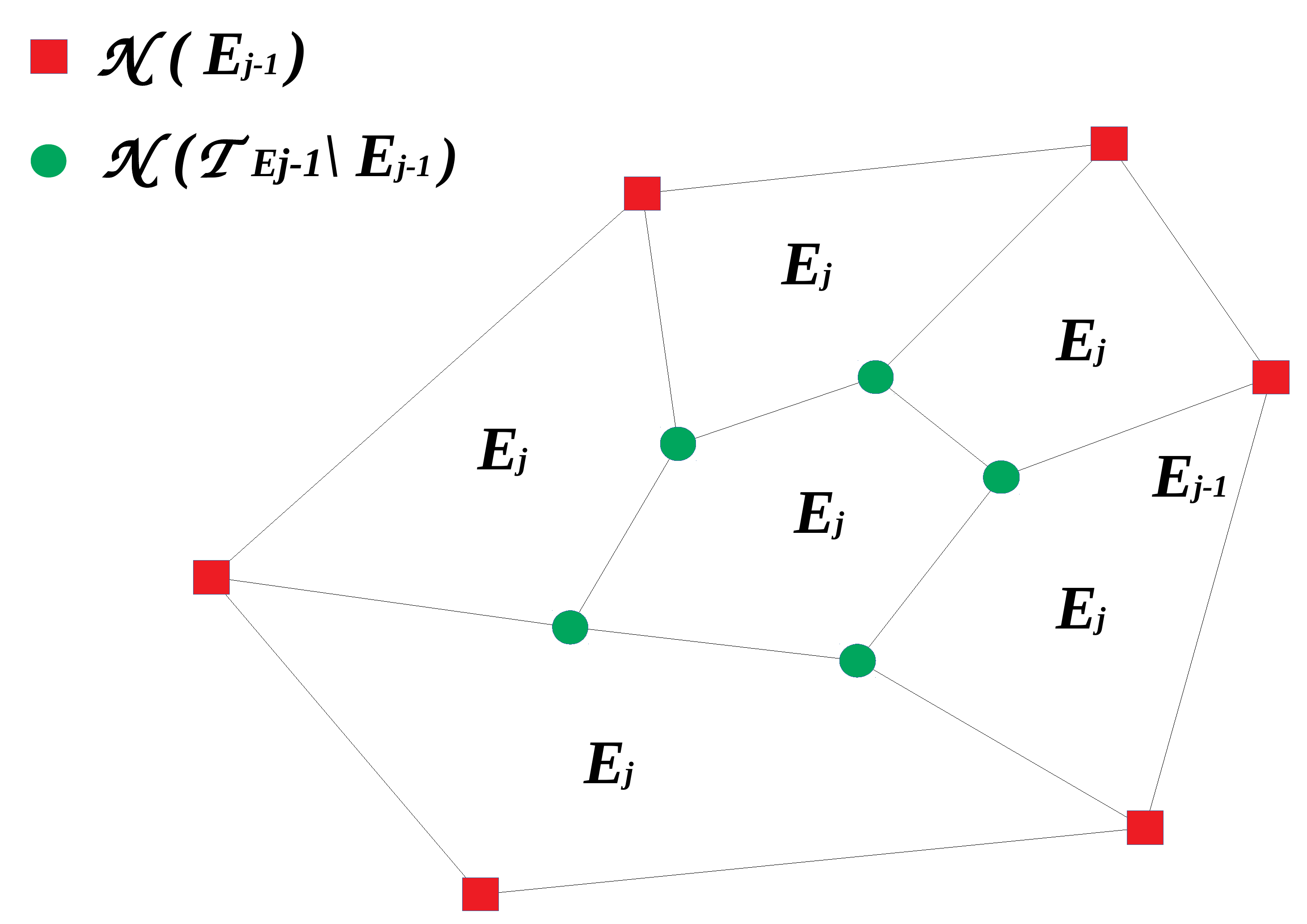}
\caption{Example of nodes related to the set of indices $\mathcal{N}(\mathcal{T}_{E_{j-1}} \backslash E_{j-1})$ (red squares) and to the set of indices $\mathcal{N}(E_{j-1})$ (green circles).}
\label{nodes_representation}
\end{figure}

\section{The BPX framework}\label{BPX_framework}
In the following section, we apply the BPX multigrid framework to the theoretical convergence analysis of our multigrid virtual element method. The BPX multigrid theory was firstly developed by Bramble, Pasciak and Xu in \cite{bramble1991analysis} for the analysis of multigrid methods with non-nested and non-inherited quadratic forms. Then, it was later extended in \cite{duan2007generalized}.

First, we introduce the assumptions that stands at the basis of the BPX theory and then we recall the theorems that guarantee the convergence of the method under these assumptions.

The BPX multigrid theory is based on the following assumptions.
\begin{assumption}\label{bilinearform_assumption}
$\exists C_{A3} > 0$ such that for any $j = 2, \dots, J$
\begin{equation*}
\mathcal{A}_{j}(I_{j-1}^{j} u, I_{j-1}^{j} u) \leq C_{A3} \ \mathcal{A}_{j-1}(u,u) \ \ \ \forall u \in V_{j-1},
\end{equation*}
where $C_{A3}$ is independent of $j$.
\end{assumption}

\begin{assumption}\label{regularity_approximation_assumption} 
Approximation property: $\exists C_{A4} > 0$ such that
\begin{equation}
| \mathcal{A}_{j}((I - I_{j-1}^{j} P_{j}^{j-1}) u,u )| \leq C_{A4} \frac{\|A_{j} u\|^{2}_{j}}{\lambda_j} \ \ \ \forall u \in V_{j}, \ j = 2 \dots, J,
\end{equation}
where $\lambda_j$ is the largest eigenvalue of $A_{j}$, $C_{A4}$ is independent of $j$, and $\| \cdot \|_j$ is the norm induced by $(\cdot,\cdot)_j$.
\end{assumption}

\begin{assumption}\label{smoothing_hypothesis}
Smoothing property: $ \exists C_{A5} > 0 $ such that
\begin{equation}\label{equation_smoothing_hypothesis}
\frac{\|u\|^{2}_{j}}{\lambda_j} \leq C_{A5} (\tilde{R}_{j}u,u)_{j} \ \ \ \forall u \in V_{j}, \ j = 1 \dots, J,
\end{equation}
where $\tilde{R}_{j} = (I - K_{j}^{*}K_{j})A_{j}^{-1}$ and $C_{A5}$ is independent of $j$.
\end{assumption}

The validity of Assumptions A\ref{bilinearform_assumption} and 
A\ref{smoothing_hypothesis} is proved in  \Cref{Convergence_analysis}.  Concerning Assumption A\ref{regularity_approximation_assumption}, in \cite{duan2007generalized} it has been proved that the following hypotheses are sufficient for the validity of Assumption A\ref{regularity_approximation_assumption}. In \Cref{Convergence_analysis}, we prove that Hypotheses H\ref{hypothesis2H}-H\ref{hypothesis8H} are satisfied in our framework involving the elliptic problem \eqref{elliptic_problem} satisfying the elliptic regularity assumption \eqref{elliptic_regularity}.

\begin{hypothesis} \label{hypothesis2H}
$\mathcal{A}_{j}(\cdot, \cdot): V_{j} \times V_{j} \to \mathbb{R}$ is a symmetric, positive definite and bounded bilinear form and we define
\begin{equation*}
\vertiii{u}_{1,j} := \sqrt{\mathcal{A}_j(u,u)} \ \ \ \forall u \in V_{j}, \ \forall j.
\end{equation*}
\end{hypothesis}

\begin{hypothesis} \label{hypothesis3H}
There exists an interpolation operator $\mathcal{I}^{i}: H^{2}(\Omega) \to V_{i}$ such that for all $j = 2, \dots, J$,

\begin{equation}\label{hypothesis3}
\| u - \mathcal{I}^{i} u \|_{L^{2}(\Omega)} + \delta_{i} \vertiii{u - \mathcal{I}^{i} u }_{1,i} \leq C \delta^{2}_{i} \| u \|_{H^{2}(\Omega)} \ \ \ i = j-1,j. 
\end{equation}
\end{hypothesis}

\begin{hypothesis}\label{hypothesis4H}
For all $j = 2, \dots, J$, it holds
\begin{equation}
\| I_{j-1}^{j} v \|_{L^{2}(\Omega)} \leq C_{H4} \  \| v \|_{L^{2}(\Omega)} \ \ \ \forall v \in V_{j-1}.
\end{equation}
\end{hypothesis}

\begin{hypothesis} \label{hypothesis5H}
For all $j = 1, \dots, J$, it holds 
\begin{equation} \label{equivalence_scalar_product}
C^{-1} \| v \|_{L^{2}(\Omega)} \leq \|v\|_{j} \leq C \|v\|_{L^{2}(\Omega)} \ \ \ \forall v \in V_{j}.
\end{equation}
\end{hypothesis}

\begin{hypothesis} \label{hypothesis6H}
For all $j = 1, \dots, J$, the following inverse inequality holds
\begin{equation}\label{hypothesis4}
\vertiii{v}_{1,j} \leq C \delta_{j}^{-1} \| v \|_{L^{2}(\Omega)} \ \ \ \forall v \in V_{j}.
\end{equation}
\end{hypothesis}

\begin{hypothesis} \label{hypothesis7H}
Let $f \in L^{2}(\Omega)$. Let $u \in V$ and $u_{i} \in V_{i}$ be respectively the solution of 
\begin{equation}\label{hypothesis7}
\mathcal{A}(u,u) = (f,v)_{L^{2}(\Omega)} \ \ \ \forall v \in V, \ \ \ \mathcal{A}_{i}(u_i,v) = (f,v)_{L^{2}(\Omega)} \ \ \ \forall v \in V_{i}.
\end{equation}
For all $j = 2, \dots, J$, we require that
\begin{equation*}
\| u - u_{i} \|_{L^{2}(\Omega)} + \delta_{i} \vertiii{ u - u_{i} }_{1,i} \leq C \delta_{i}^{2} \| f \|_{L^{2}(\Omega)} \ \ \ i = j-1, j.
\end{equation*}
\end{hypothesis}

\begin{hypothesis}\label{hypothesis8H}
The following estimate holds true
\begin{equation*}
\| \mathcal{I}^{j} w - I_{j-1}^{j} \mathcal{I}^{j-1} w \|_{L^2(\Omega)} \leq C_{H7} \  \delta_{j}^{2} \| w \|_{H^{2}(\Omega)} \ \ \ \forall w \in H^2(\Omega).
\end{equation*}
\end{hypothesis}

The convergence analysis of the multigrid method is stated in the following 
two theorems \cite{bramble1991analysis,duan2007generalized} that prove that under Assumptions A\ref{bilinearform_assumption}, A\ref{regularity_approximation_assumption} and  A\ref{smoothing_hypothesis}, the error propagation operator $I - B_j A_j$ defined in \eqref{error_propagator_operator} satisfies
\begin{equation}\label{convergence}
|\mathcal{A}_{j}((I -B_j A_j) u, u)| \leq \sigma \mathcal{A}_{j}(u,u) \ \ \ \forall u \in V_{j}, \ \ \ \forall j \geq 1,
\end{equation}
with constant $\sigma < 1$. In particular, \cref{V_cycle_convergence} and \cref{W_cycle_convergence} state the convergence of the symmetric V-cycle method and W-cycle methods, respectively. They include both the case in which the bilinear form $\mathcal{A}_{j-1}(\cdot, \cdot)$ is  inherited and non-inherited as in \cref{inherited_non_inherited_bilinear_form}.
\begin{theorem}\citetext{\citealp[Theorem 2]{bramble1991analysis}; \citealp[Theorem 3.1]{duan2007generalized}}
\label{V_cycle_convergence}
If Assumption A\ref{bilinearform_assumption} holds with $C_{A3} = 1$ and if Assumptions A\ref{regularity_approximation_assumption} and A\ref{smoothing_hypothesis} hold, then for the V-cycle multigrid ($p=1$) inequality \eqref{convergence} holds true with
\begin{equation}\label{delta}
\sigma = \frac{M}{M + \nu},
\end{equation}
where $M$ depends on $C_{A4}$ and $C_{A5}$, and $\nu$ is the number of smoothing steps.
Moreover, if Assumptions A\ref{bilinearform_assumption}, A\ref{regularity_approximation_assumption} and A\ref{smoothing_hypothesis} hold, then for the V-cycle multigrid ($p=1$) inequality \eqref{convergence} holds true with
\begin{equation*}
\sigma = \frac{C_{A4} C_{A5}}{\nu - C_{A4} C_{A5}},
\end{equation*}
provided that $\nu > 2 C_{A4} C_{A5}$.
\end{theorem}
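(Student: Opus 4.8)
The plan is to establish the energy contraction \eqref{convergence} by induction on the level $j$, using the recursive identity \eqref{error_propagator_operator} for the error propagation operator $I-B_jA_j$ with $p=1$. The base case $j=1$ is trivial: since $B_1=A_1^{-1}$ we have $I-B_1A_1=0$, so \eqref{convergence} holds with $\sigma=0$. For the inductive step I would first record two structural facts. By \Cref{inherited_non_inherited_bilinear_form} and the definition of $P_j^{j-1}$, the operator $T_j:=I_{j-1}^{j}P_j^{j-1}$ is $\mathcal{A}_j$-self-adjoint and nonnegative, since $\mathcal{A}_j(T_j u,u)=\mathcal{A}_{j-1}(P_j^{j-1}u,P_j^{j-1}u)\ge 0$; moreover Assumption A\ref{smoothing_hypothesis} forces $I-K_j^{*}K_j\ge 0$, so the smoother is $\mathcal{A}_j$-nonexpansive and $\tilde{K}_j^{(\nu)}$ reduces the energy. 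Consequently $I-B_jA_j$ is $\mathcal{A}_j$-symmetric and nonnegative, and $\sigma$ may be identified with its $\mathcal{A}_j$-operator norm.

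For the first statement (inherited form, $C_{A3}=1$) the key is that $T_j$ becomes the $\mathcal{A}_j$-orthogonal projection onto $I_{j-1}^{j}V_{j-1}$ and $P_j^{j-1}I_{j-1}^{j}=I$ on $V_{j-1}$. Writing $w:=\tilde{K}_j^{(\nu)}u$ and inserting \eqref{error_propagator_operator}, Pythagoras splits the energy exactly as
\[
\mathcal{A}_j\big((I-B_jA_j)u,u\big)=\mathcal{A}_j\big((I-T_j)w,(I-T_j)w\big)+\mathcal{A}_{j-1}\big((I-B_{j-1}A_{j-1})P_j^{j-1}w,\,P_j^{j-1}w\big).
\]
The first term is bounded by the approximation property A\ref{regularity_approximation_assumption}, giving $\mathcal{A}_j((I-T_j)w,w)\le C_{A4}\|A_j w\|_j^{2}/\lambda_j$, while A\ref{smoothing_hypothesis}, telescoped over the $\nu$ sweeps encoded in $\tilde{K}_j^{(\nu)}$, yields $\|A_j w\|_j^{2}/\lambda_j\le (C_{A5}/\nu)[\mathcal{A}_j(u,u)-\mathcal{A}_j(w,w)]$; the second term is handled by the inductive hypothesis together with $\mathcal{A}_{j-1}(P_j^{j-1}w,P_j^{j-1}w)=\mathcal{A}_j(T_jw,T_jw)$. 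Setting $x=\mathcal{A}_j((I-T_j)w,(I-T_j)w)$ and $y=\mathcal{A}_j(T_jw,T_jw)$ and $M:=C_{A4}C_{A5}$, this produces the scalar constraint $x\le \frac{M}{\nu}(\mathcal{A}_j(u,u)-x-y)$ and the objective bound $\mathcal{A}_j((I-B_jA_j)u,u)\le x+\sigma_{j-1}y$; the constraint gives $x\le M(\mathcal{A}_j(u,u)-y)/(M+\nu)$, and since $\sigma_{j-1}\le M/(M+\nu)$ the coefficient of $y$ is nonpositive, so the maximum is attained at $y=0$ and the induction closes with $\sigma=M/(M+\nu)$.

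For the second statement (no assumption $C_{A3}=1$, covering the non-inherited form) the obstruction is precisely that $T_j$ is no longer idempotent, so the clean orthogonal splitting above fails and cross terms survive. Following \cite{duan2007generalized}, I would instead control these perturbation terms using the stability Assumption A\ref{bilinearform_assumption}, $\mathcal{A}_j(I_{j-1}^{j}u,I_{j-1}^{j}u)\le C_{A3}\,\mathcal{A}_{j-1}(u,u)$, together with A\ref{regularity_approximation_assumption} and A\ref{smoothing_hypothesis}, absorbing the non-projection defect into the smoothing-induced energy drop. This is exactly where extra smoothing is required: imposing $\nu>2C_{A4}C_{A5}$ keeps the denominator of $\sigma=C_{A4}C_{A5}/(\nu-C_{A4}C_{A5})$ positive and strictly larger than the numerator, so that $\sigma<1$ and the contraction is uniform in $j$.

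The hard part is closing the recursion in the non-inherited case, where the absence of orthogonality of $T_j$ means the energy cannot be split for free and one must show the residual cross terms are dominated by the smoothing reduction $\mathcal{A}_j(u,u)-\mathcal{A}_j(\tilde{K}_j^{(\nu)}u,\tilde{K}_j^{(\nu)}u)$. Establishing the sharp smoothing estimate $\|A_j\tilde{K}_j^{(\nu)}u\|_j^{2}/\lambda_j\lesssim \nu^{-1}[\mathcal{A}_j(u,u)-\mathcal{A}_j(\tilde{K}_j^{(\nu)}u,\tilde{K}_j^{(\nu)}u)]$ from A\ref{smoothing_hypothesis}, namely that each symmetric Gauss–Seidel sweep contributes an energy decrement comparable to the squared scaled residual, is the technical heart of both statements and is what forces the threshold $\nu>2C_{A4}C_{A5}$ in the general case.
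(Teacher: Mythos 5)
The paper does not prove this theorem at all: it is imported verbatim from the cited sources (Theorem~2 of Bramble--Pasciak--Xu and Theorem~3.1 of Duan et al.), and the paper's own contribution is confined to verifying Assumptions A\ref{bilinearform_assumption}--A\ref{smoothing_hypothesis} in the VEM setting. Your sketch is a faithful reconstruction of the standard argument in those references --- induction on the level via \eqref{error_propagator_operator}, the splitting through $T_j = I_{j-1}^{j}P_j^{j-1}$, the approximation bound from A\ref{regularity_approximation_assumption}, the telescoped smoothing estimate from A\ref{smoothing_hypothesis}, and the closing scalar optimization --- so there is no divergence of approach to report. One small imprecision: your blanket claim that $I-B_jA_j$ is $\mathcal{A}_j$-nonnegative is only justified when $C_{A3}=1$ (where $0\le T_j\le I$ follows); in the general non-inherited case one only controls $\lvert\mathcal{A}_j((I-B_jA_j)u,u)\rvert$, which is precisely why \eqref{convergence} is stated with an absolute value, and your second paragraph correctly leaves that case to the perturbation analysis of Duan et al. rather than claiming the orthogonal splitting survives.
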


\begin{theorem}{\cite[Theorems 3 and 7]{bramble1991analysis}}\label{W_cycle_convergence}
If Assumption A\ref{bilinearform_assumption} holds with $C_{A3} = 1$ and if Assumptions A\ref{regularity_approximation_assumption} and A\ref{smoothing_hypothesis} hold, then for the W-cycle multigrid ($p=2$) \eqref{convergence} holds true with $\sigma$ defined as in \eqref{delta}.
Moreover, if Assumptions A\ref{bilinearform_assumption}, A\ref{regularity_approximation_assumption} and A\ref{smoothing_hypothesis} hold, then for the W-cycle multigrid ($p=2$) \eqref{convergence} holds true with $\sigma$ defined as in \eqref{delta} provided that $\nu$ is chosen sufficiently large.
\end{theorem}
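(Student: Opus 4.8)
The plan is to establish the energy contraction \eqref{convergence} for the W-cycle ($p=2$) by induction on the level $j$, feeding the recursion \eqref{error_propagator_operator} into the $\mathcal{A}_j$ inner product and using the three BPX assumptions to close the induction. Throughout I abbreviate $E_j := I - B_j A_j$, $T_j := I_{j-1}^{j} P_{j}^{j-1}$, and $w := \tilde{K}_j^{(\nu)} u$ for a fixed $u \in V_j$. The base case $j=1$ is immediate: $B_1 = A_1^{-1}$ gives $E_1 = 0$. For the inductive step I would first verify that each factor in \eqref{error_propagator_operator} is $\mathcal{A}_j$-self-adjoint and positive semidefinite. Indeed, assuming inductively that $E_{j-1}$ is $\mathcal{A}_{j-1}$-self-adjoint with $0 \le \mathcal{A}_{j-1}(E_{j-1}v,v) \le \sigma\,\mathcal{A}_{j-1}(v,v)$, the operator $E_{j-1}^2$ is self-adjoint and PSD with energy norm at most $\sigma^2$; since $(\tilde{K}_j^{(\nu)})^*$ is the $\mathcal{A}_j$-adjoint of $\tilde{K}_j^{(\nu)}$, conjugating the PSD operator $(I-T_j) + I_{j-1}^{j} E_{j-1}^2 P_{j}^{j-1}$ keeps $E_j$ self-adjoint and PSD. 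This secures the lower bound $0 \le \mathcal{A}_j(E_j u,u)$ and removes the absolute value in \eqref{convergence}.

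Next I would expand \eqref{error_propagator_operator} in the energy product. Using the adjointness of $(\tilde{K}_j^{(\nu)})^*$ and the defining relation $\mathcal{A}_j(I_{j-1}^{j} v, z) = \mathcal{A}_{j-1}(v, P_{j}^{j-1} z)$ (a consequence of the definition of $P_{j}^{j-1}$ and the symmetry of the forms), I obtain
\[
\mathcal{A}_j(E_j u, u) = \mathcal{A}_j\big((I-T_j)w, w\big) + \mathcal{A}_{j-1}\big(E_{j-1}^2 P_{j}^{j-1}w,\, P_{j}^{j-1}w\big).
\]
When $C_{A3}=1$, the inherited-form identity forces $T_j$ to be the $\mathcal{A}_j$-orthogonal projection onto the range of $I_{j-1}^{j}$, so that $a := \mathcal{A}_j((I-T_j)w,w) \ge 0$ and $b := \mathcal{A}_j(T_jw,w) = \mathcal{A}_{j-1}(P_{j}^{j-1}w, P_{j}^{j-1}w) \ge 0$ with $a+b = \mathcal{A}_j(w,w)$. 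The inductive bound on $E_{j-1}^2$ then gives
\[
\mathcal{A}_j(E_j u, u) \le a + \sigma^2 b.
\]

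To close the induction I must show $a + \sigma^2 b \le \sigma\,\mathcal{A}_j(u,u)$ with $\sigma = M/(M+\nu)$. Writing $D := \mathcal{A}_j(u,u) - \mathcal{A}_j(w,w) \ge 0$ for the energy dissipated by the $\nu$ pre-smoothing sweeps (so $\mathcal{A}_j(u,u) = a+b+D$), a short rearrangement reduces the target to $(1-\sigma)(a-\sigma b) \le \sigma D$, and since $a - \sigma b \le a$ and $\sigma/(1-\sigma) = M/\nu$, it suffices to prove the single estimate $a \le \tfrac{M}{\nu}\,D$. Here the approximation property A\ref{regularity_approximation_assumption}, applied to $w$, yields $a = \mathcal{A}_j((I-T_j)w,w) \le C_{A4}\,\|A_j w\|_j^2/\lambda_j$, and the smoothing property A\ref{smoothing_hypothesis} — via \eqref{equation_smoothing_hypothesis} — is used to bound the scaled residual $\|A_j w\|_j^2/\lambda_j$ by $D/\nu$ up to a constant, giving $M \simeq C_{A4}C_{A5}$. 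This quantitative smoothing estimate is the step I expect to be the main obstacle: one must telescope the per-step energy drops produced by the alternating factors $K_j, K_j^{*}$ composing $\tilde{K}_j^{(\nu)}$, and argue that the final residual $\|A_j w\|_j$ is controlled by the total drop $D$ divided by the number of sweeps $\nu$, which rests on the monotone decay of the residual along the sweep together with \eqref{equation_smoothing_hypothesis}.

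Finally, for the non-inherited case ($C_{A3}>1$) the operator $T_j$ is no longer a projection, and Assumption A\ref{bilinearform_assumption} enters through the amplification $\mathcal{A}_j(I_{j-1}^{j}\,\cdot,\,I_{j-1}^{j}\,\cdot) \le C_{A3}\,\mathcal{A}_{j-1}(\cdot,\cdot)$, which inserts a factor $C_{A3}$ into the coarse-grid term of the split. The same induction carries through, but the level-to-level recursion for the contraction factor now admits a fixed point strictly below $1$ only once the smoothing gain $M/\nu$ dominates the amplification by $C_{A3}$; this is precisely the origin of the requirement that $\nu$ be chosen sufficiently large. It is worth noting that the W-cycle's use of $E_{j-1}^2$ rather than $E_{j-1}$ is what keeps this recursion contractive uniformly in the number of levels $J$.
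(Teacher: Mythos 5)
This theorem is not proved in the paper at all: it is quoted verbatim from Bramble--Pasciak--Xu (Theorems 3 and 7 of \cite{bramble1991analysis}), so there is no in-paper argument to compare against. Measured against the cited source, your reconstruction follows essentially the same route: the induction on levels via \eqref{error_propagator_operator}, the positive semidefiniteness of $I - I_{j-1}^{j}P_j^{j-1}$ under $C_{A3}=1$, the split $\mathcal{A}_j(E_j u,u) \le a + \sigma^2 b$, and the reduction of the inductive step to the single estimate $a \le (M/\nu)\,D$ obtained by chaining A\ref{regularity_approximation_assumption} with A\ref{smoothing_hypothesis} and the telescoped, monotonically decreasing per-sweep energy drops --- this is exactly the BPX mechanism, and your algebra closing the induction with $\sigma = M/(M+\nu)$ is correct. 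Two small imprecisions worth noting: $C_{A3}=1$ is only an inequality, so $T_j$ need not literally be the $\mathcal{A}_j$-orthogonal projection (though $a\ge 0$ still follows by Cauchy--Schwarz plus $C_{A3}=1$, which is all you use); and the non-inherited case (BPX Theorem 7) is genuinely more delicate than your closing paragraph suggests, since $I-T_j$ is then indefinite and one must control $\lvert\mathcal{A}_j(E_j u,u)\rvert$ from both sides before the fixed-point argument in $\nu$ applies.
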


In the rest of the paper, we prove the validity of Hypotheses H\ref{hypothesis2H}-H\ref{hypothesis8H} and of Assumptions A\ref{bilinearform_assumption} and A\ref{smoothing_hypothesis} for the two-level method. Therefore, we set $J=2$ and we consider the two non-nested spaces $V_{J-1}$ and $V_{J}$. Next, we generalize the analysis to the V-cycle and the W-cycle.

\section{Convergence analysis}\label{Convergence_analysis}

In this section we prove the validity of all Hypotheses H\ref{hypothesis2H}-H\ref{hypothesis8H} and of Assumptions A\ref{bilinearform_assumption} and A\ref{smoothing_hypothesis}. In  \Cref{Convergence_analysis_two_level_method} we focus on the convergence of the two-level method and then, in  \Cref{uniformly_bounded_constant} we extend the results to the analysis of the convergence of the V-cycle and the W-cycle multigrid schemes.

\subsection{Convergence analysis of the two-level method}\label{Convergence_analysis_two_level_method}
Hypothesis H\ref{hypothesis2H} is satisfied since by construction the forms $\mathcal{A}_{j}(\cdot, \cdot)$ are symmetric, positive definite and bounded bilinear forms for all $j$.

We set $\vertiii{u}_{1,E_j} : = \sqrt{\mathcal{A}_{j}^{E_{j}}(u,u)}$. Therefore, 
\begin{equation*}
\vertiii{u}_{1,j}^{2} = \sum_{E_{j} \in \mathcal{T}_{j}} \vertiii{u}_{1, E_j}^{2} = \sum_{E_{j} \in \mathcal{T}_{j}} \mathcal{A}_{j}^{E_{j}}(u,u) = \mathcal{A}_{j}(u,u).
\end{equation*}

In particular, proceeding as in \cite{chen2018some},  it can be proved that for any $u \in {V}^{E_{j}}$, the following norm equivalence holds
\begin{equation*}
|  u |_{H^{1}(E_{j})}^2  \eqsim \mathcal{A}^{E_{j}}_{j}(u,u).
\end{equation*}
As a consequence,  we conclude that
\begin{equation}\label{equivalenceH1}
\vertiii{u}_{1,j} \eqsim |u|_{H^{1}(\Omega)},
\end{equation}
 and we will use this equivalence in the following proofs.
 
As interpolation operator, we consider the 
operator $\mathcal{I}^{j}: H^{2}(\Omega) \to V_{j}$ defined as 
\begin{equation}\label{interpolation_operator}
\text{dof}_{i}(u - \mathcal{I}^{j}u) = 0 \ \ \ \forall u \in V_{j}, \  i \in \mathcal{N}(\mathcal{T}_{j}).
\end{equation}
For the enhanced virtual element framework, Hypothesis H\ref{hypothesis3H} follows from the following proposition given in \cite{ahmad2013equivalent}.

\begin{proposition}\label{interpolant_estimates}
Assume that Assumption A\ref{mesh_assumption} is satisfied. Then, for  $E_{i} \in \mathcal{T}_{i}, \ i = j-1, j$ and for every $u \in H^2(E_{i})$, the interpolant $\mathcal{I}^{i} u \in V_{i}$ defined in \eqref{interpolation_operator} satisfies
\begin{equation}\label{interpolant_estimates_local}
\| u - \mathcal{I}^{i} u \|_{L^{2}(E_i)} + \delta_{i} \\|u - \mathcal{I}^{i} u \\|_{H^1(E_i)} \lesssim \delta^{2}_{i} | u |_{H^{2}(E_i)},
\end{equation}
the hidden constant depends on $\rho_{0}$ defined in Assumption A\ref{mesh_assumption}.
\end{proposition}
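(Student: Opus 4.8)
The plan is to establish the local interpolation estimate \eqref{interpolant_estimates_local} by combining standard polynomial approximation with the stability of the interpolation operator $\mathcal{I}^i$ on each element $E_i$. The key observation is that, although $\mathcal{I}^i u$ is a virtual (non-explicitly-known) function, it is defined purely through the pointwise degrees of freedom \eqref{interpolation_operator}, so it reproduces polynomials of degree $\le 1$: for any $p \in \mathbb{P}_1(E_i)$ one has $\mathcal{I}^i p = p$, since $p \in V^{E_i}$ and the two functions share all nodal values. This reproduction property is what allows a Bramble--Hilbert-type argument.

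First I would introduce, for a fixed $u \in H^2(E_i)$, a suitable polynomial approximant $p_u \in \mathbb{P}_1(E_i)$ (for instance the averaged Taylor polynomial or the $L^2$-projection) for which the classical estimates
\begin{equation*}
\|u - p_u\|_{L^2(E_i)} \lesssim \delta_i^2 |u|_{H^2(E_i)}, \qquad |u - p_u|_{H^1(E_i)} \lesssim \delta_i |u|_{H^2(E_i)}
\end{equation*}
hold, with constants depending only on $\rho_0$ through the star-shapedness in Assumption A\ref{mesh_assumption}. Then I would write the error as
\begin{equation*}
u - \mathcal{I}^i u = (u - p_u) - \mathcal{I}^i(u - p_u),
\end{equation*}
using $\mathcal{I}^i p_u = p_u$. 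It therefore suffices to control the interpolation error of the \emph{reduced} function $w := u - p_u$, which already has small norms, and this reduces the whole proposition to proving a stability (boundedness) bound for $\mathcal{I}^i$ of the form $\|\mathcal{I}^i w\|_{L^2(E_i)} + \delta_i |\mathcal{I}^i w|_{H^1(E_i)} \lesssim \|w\|_{L^2(E_i)} + \delta_i|w|_{H^1(E_i)}$, after which \eqref{interpolant_estimates_local} follows by the triangle inequality.

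The main obstacle, and the heart of the argument, is the stability estimate for $\mathcal{I}^i$ on the local virtual element space, because $\mathcal{I}^i w$ is defined only through its nodal values $w(x_\ell)$ at the vertices, and bounding a virtual function by its degrees of freedom requires controlling it through the boundary data. I expect to proceed by first bounding the nodal values by the edge traces: since $\mathcal{I}^i w$ is piecewise linear on $\partial E_i$ and agrees with $w$ at the vertices, a trace and scaling argument (using $|e| \gtrsim \rho_0 \delta_i$ from Assumption A\ref{mesh_assumption}) gives control of $\mathcal{I}^i w$ on $\partial E_i$ in terms of $\|w\|_{H^1(E_i)}$ via a trace inequality and a one-dimensional inverse estimate on each edge. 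The interior behaviour is then governed by the defining constraints of $V^{E_i}$ (namely $\Delta(\mathcal{I}^i w) \in \mathbb{P}_1$ and the enhancement condition), so an elliptic/harmonic-type stability bound propagates the boundary control into the element, yielding the desired $H^1$ and $L^2$ bounds. Throughout, a careful scaling argument on a reference configuration keeps track of the powers of $\delta_i$ and shows that all hidden constants depend only on $\rho_0$, as asserted. I would note that the full technical details of this step are precisely what is carried out in \cite{ahmad2013equivalent}, to which the proposition is attributed, so I would either reproduce the scaling argument or cite it directly.
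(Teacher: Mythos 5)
First, a point of comparison: the paper does not prove this proposition at all --- it is imported verbatim from \cite{ahmad2013equivalent}, so there is no in-paper argument to measure yours against. Your outline is the standard route taken in that reference and throughout the VEM literature: polynomial reproduction ($\mathcal{I}^i p = p$ for $p\in\mathbb{P}_1(E_i)$, which is correct since $\mathbb{P}_1(E_i)\subset V^{E_i}$ and the vertex values are unisolvent on $V^{E_i}$), a Bramble--Hilbert approximant $p_u$ whose constants depend only on the star-shapedness parameter $\rho_0$, the splitting $u-\mathcal{I}^i u=(u-p_u)-\mathcal{I}^i(u-p_u)$, and finally a stability bound for $\mathcal{I}^i$. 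The first three ingredients are sound.

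The step that fails as written is the stability estimate you reduce everything to, namely $\|\mathcal{I}^i w\|_{L^2(E_i)}+\delta_i|\mathcal{I}^i w|_{H^1(E_i)}\lesssim \|w\|_{L^2(E_i)}+\delta_i|w|_{H^1(E_i)}$. The interpolant depends on $w$ only through its vertex values, and in two dimensions $H^1(E_i)$ does not embed into $C^0(\overline{E_i})$; likewise the trace of an $H^1$ function lies only in $H^{1/2}(\partial E_i)$, which still does not control point values, so your trace-plus-one-dimensional-inverse-estimate argument cannot close and no bound of this form can hold. The standard repair is to keep the full scaled $H^2$ norm on the right: a scaled Sobolev embedding on the star-shaped element gives $\max_{\ell}|w(x_\ell)|\lesssim \delta_i^{-1}\bigl(\|w\|_{L^2(E_i)}+\delta_i|w|_{H^1(E_i)}+\delta_i^2|w|_{H^2(E_i)}\bigr)$, after which the dof--$L^2$ norm equivalence of \cref{norm_equivalence} and the inverse inequality of \cref{theorem_inverse_inequality} (rather than a boundary-to-interior elliptic argument) yield $\|\mathcal{I}^i w\|_{L^2(E_i)}+\delta_i|\mathcal{I}^i w|_{H^1(E_i)}\lesssim \|w\|_{L^2(E_i)}+\delta_i|w|_{H^1(E_i)}+\delta_i^2|w|_{H^2(E_i)}$. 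Since $w=u-p_u$ with $p_u\in\mathbb{P}_1(E_i)$, one has $|w|_{H^2(E_i)}=|u|_{H^2(E_i)}$, so the extra term is of exactly the right order and \eqref{interpolant_estimates_local} follows unchanged. With that correction your sketch matches the proof in the cited reference.
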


If we choose  $(\cdot,\cdot)_j$ as the $L^{2}$-inner product, then Hypothesis H\ref{hypothesis5H} is satisfied.  
In the following, we denote by $\mathcal{C}_{E_{j-1}}^{x_{i}}$ the set of elements $E_{j} \in \mathcal{T}_{E_{j-1}}$ having the node $x_i$ as vertex and by $\#\mathcal{C}_{E_{j-1}}^{x_{i}}$ its cardinality. 
\begin{proposition}\cite[Corollary 4.6]{chen2018some} \label{norm_equivalence}
For any $u \in {V}^{E_j}, \  E_j \in \mathcal{T}_{j}$, the following norm equivalence holds true
\begin{equation}\label{norms}
\delta_{j} \sqrt{ \sum_{i \in \mathcal{N}(E_{j})} \big (\text{dof}_{i}(u) \big)^2}
 \lesssim \| u \|_{L^{2}(E_j)} \lesssim \delta_{j} \sqrt{ \sum_{i \in \mathcal{N}(E_{j})} \big (\text{dof}_{i}(u) \big)^2}.
\end{equation}
Moreover, for any $u \in V_{j}^{E_{j-1}}$, $E_{j-1} \in \mathcal{T}_{j-1}$, the following norm equivalence holds
\begin{equation*}
\delta_{j}^2 \sum_{i \in \mathcal{N}(\mathcal{T}_{E_{j-1}})} \# \mathcal{C}_{E_{j-1}}^{x_i} |\text{dof}_{i}(u)|^{2} \lesssim \| u \|^{2}_{L^{2}(E_{j-1})} \lesssim \delta_{j}^2 \sum_{i \in \mathcal{N}( \mathcal{T}_{E_{j-1}})} \# \mathcal{C}_{E_{j-1}}^{x_i} |\text{dof}_{i}(u)|^{2}.
\end{equation*}
\end{proposition}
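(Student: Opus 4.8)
The plan is to derive both statements from a single local norm equivalence on each fine element: the first statement is exactly that equivalence, and the second follows from it by summation. The key observation is that, for $k=1$, the vertex values $\text{dof}_i(u) = u(x_i)$ constitute a unisolvent set of degrees of freedom for ${V}^{E_j}$, so the map $u \mapsto \big(\sum_{i \in \mathcal{N}(E_j)} |\text{dof}_i(u)|^2\big)^{1/2}$ is a genuine norm on the finite-dimensional space ${V}^{E_j}$; since $\|\cdot\|_{L^2(E_j)}$ is also a norm there, the two are equivalent on each fixed element. The entire difficulty is therefore to show that the equivalence constants can be chosen independently of $E_j$ and of the mesh, that is, depending only on $\rho_0$.

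For the first equivalence I would make this uniformity explicit by a scaling argument. Mapping $E_j$ to a reference polygon $\hat E$ of unit diameter via dilation by $\delta_{E_j}^{-1}$, the vertex values are invariant while $\|u\|^2_{L^2(E_j)} = \delta_{E_j}^2 \|\hat u\|^2_{L^2(\hat E)}$, and Assumption A\ref{quasi_uniform_mesh} gives $\delta_{E_j} \eqsim \delta_j$; thus it suffices to prove $\|\hat u\|^2_{L^2(\hat E)} \eqsim \sum_i |\hat u(\hat x_i)|^2$ with a constant uniform over the admissible reference shapes. Assumption A\ref{mesh_assumption} confines $\hat E$ to a class of polygons that are star-shaped with respect to a disk of radius $\geq \rho_0$, have edges of length $\geq \rho_0$, and a uniformly bounded number of vertices, and on this class the equivalence constant is controlled purely by $\rho_0$. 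Concretely, I would first establish the boundary equivalence $\|\hat u\|^2_{L^2(\partial \hat E)} \eqsim \sum_i |\hat u(\hat x_i)|^2$, which is the standard one-dimensional mass-matrix estimate for the piecewise-linear trace $\hat u|_{\partial \hat E} \in \mathbb{B}_1(\partial \hat E)$ and uses only the lower bound on the edge lengths, and then pass from the boundary to the interior using a trace inequality together with the facts that $\Delta \hat u \in \mathbb{P}_1(\hat E)$ and that $\hat u$ satisfies the projection constraint in the definition of ${V}^{\hat E}$, which let one control $\hat u$ in the interior by its boundary data and its polynomial part through $\Pi^0_{1,\hat E}$ and $\Pi^{\nabla}_{1,\hat E}$; all constants here depend only on $\rho_0$ for star-shaped domains.

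The second equivalence then follows by summation with no new ingredients. Since $u \in V_j^{E_{j-1}}$ restricts to an element of ${V}^{E_j}$ on each $E_j \in \mathcal{T}_{E_{j-1}}$, additivity of the $L^2$ norm gives $\|u\|^2_{L^2(E_{j-1})} = \sum_{E_j \in \mathcal{T}_{E_{j-1}}} \|u\|^2_{L^2(E_j)}$, and applying the first equivalence on each $E_j$ yields $\|u\|^2_{L^2(E_{j-1})} \eqsim \delta_j^2 \sum_{E_j \in \mathcal{T}_{E_{j-1}}} \sum_{i \in \mathcal{N}(E_j)} |\text{dof}_i(u)|^2$. Reordering the double sum and noting that each index $i \in \mathcal{N}(\mathcal{T}_{E_{j-1}})$ contributes once for every element of $\mathcal{T}_{E_{j-1}}$ having $x_i$ as a vertex, that is, $\# \mathcal{C}_{E_{j-1}}^{x_i}$ times, converts the right-hand side into $\delta_j^2 \sum_{i \in \mathcal{N}(\mathcal{T}_{E_{j-1}})} \# \mathcal{C}_{E_{j-1}}^{x_i} |\text{dof}_i(u)|^2$, which is the claim, the uniform constants being inherited directly from the first equivalence. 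The main obstacle in the whole argument is the interior-from-boundary step for a single element: because a virtual element function is not known explicitly inside $E_j$, controlling $\|u\|_{L^2(E_j)}$ by the vertex values requires exploiting the defining relations ($\Delta u \in \mathbb{P}_1$ and the enhancement constraint) and the star-shapedness in a way whose constants remain uniform in $\rho_0$, which is precisely where the work of Chen and Huang in \cite{chen2018some} is used.
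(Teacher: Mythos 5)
The paper does not actually prove this proposition: both the single-element equivalence and, implicitly, the patch version are imported from \cite[Corollary 4.6]{chen2018some}, so there is no in-paper argument to compare against line by line. Your reconstruction is nevertheless consistent with how that result is established. For the first equivalence, your outline (reduce to a unit-diameter configuration by dilation, prove the boundary equivalence $\|\hat u\|_{L^2(\partial\hat E)}^2\eqsim\sum_i|\hat u(\hat x_i)|^2$ from the edge-length lower bound, then pass to the interior using $\Delta\hat u\in\mathbb{P}_1(\hat E)$, the enhancement constraint and star-shapedness) is the standard route; the one point to state more carefully is that the dilation does not produce a single fixed reference element, so the uniformity in $\rho_0$ cannot come from a compactness argument over shapes and must instead come from direct trace/inverse estimates on star-shaped domains --- which is exactly the content you correctly defer to Chen and Huang. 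You should also note explicitly that the statement uses the global $\delta_j$ rather than $\delta_{E_j}$, which is harmless only under the quasi-uniformity of Assumption A\ref{quasi_uniform_mesh}; you do invoke this. The second equivalence is the part not literally contained in the cited corollary, and your summation argument is correct and is surely what the authors intend: since $u\in V_j^{E_{j-1}}$ is continuous on $E_{j-1}$, the value $\text{dof}_i(u)=u(x_i)$ is unambiguous, $\|u\|^2_{L^2(E_{j-1})}=\sum_{E_j\in\mathcal{T}_{E_{j-1}}}\|u\|^2_{L^2(E_j)}$, and reordering the double sum produces exactly the multiplicity $\#\mathcal{C}_{E_{j-1}}^{x_i}$. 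In short, the proposal is sound; its only ``gap'' is the interior-from-boundary estimate on a single element, which is precisely the cited external result and so does not need to be reproved here.
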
 

Hypothesis H\ref{hypothesis6H} can be proved from the inverse inequality of a VEM function reported in the following theorem \cite{chen2018some}.
\begin{theorem}\cite[Theorem 3.6]{chen2018some}\label{theorem_inverse_inequality}
The following inverse inequality holds true
\begin{equation}\label{inverse_inequality_equation}
\| \nabla u \|_{L^{2}(E_j)} \lesssim \delta_{j}^{-1} \| u \|_{L^{2}(E_j)} \ \ \ \forall v \in {V}^{E_{j}}.
\end{equation}
\end{theorem}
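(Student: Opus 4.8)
The plan is to establish the inverse inequality
$$\| \nabla u \|_{L^{2}(E_j)} \lesssim \delta_{j}^{-1} \| u \|_{L^{2}(E_j)}$$
for a VEM function $u \in V^{E_j}$ by reducing it to a statement purely on the degrees of freedom, and then invoking the two-sided norm equivalences already at our disposal. First I would bound the gradient energy from above by the degrees of freedom. Using the definition of the discrete bilinear form $\mathcal{A}_j^{E_j}$ in \eqref{form_A} together with the local norm equivalence $|u|_{H^1(E_j)}^2 \eqsim \mathcal{A}_j^{E_j}(u,u)$ established just before \eqref{equivalenceH1}, it suffices to control $\mathcal{A}_j^{E_j}(u,u)$. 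The consistency term $\|\Pi^0_{0,E_j}\nabla u\|_{L^2(E_j)}^2$ and the stabilization term built from the degrees of freedom of $(I - \Pi^\nabla_{1,E_j})u$ can each be estimated; the key point is that the stabilization contribution is, by its very definition, $\sum_{i\in\mathcal{N}(E_j)}\big(\mathrm{dof}_i((I-\Pi^\nabla_{1,E_j})u)\big)^2$, which is controlled by the degrees of freedom of $u$ after accounting for the projection.

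The core scaling argument is a standard polynomial inverse estimate on the consistency part. For the polynomial projection $\Pi^\nabla_{1,E_j} u \in \mathbb{P}_1(E_j)$, the inverse inequality for polynomials on a star-shaped domain of diameter $\delta_j$ (valid under Assumption A\ref{mesh_assumption}) gives $\|\nabla \Pi^\nabla_{1,E_j} u\|_{L^2(E_j)} \lesssim \delta_j^{-1}\|\Pi^\nabla_{1,E_j} u\|_{L^2(E_j)}$; the hidden constant depends only on $\rho_0$ via the shape regularity. Combining this with the fact that $\Pi^\nabla_{1,E_j}$ and $\Pi^0_{0,E_j}\nabla$ are stable in the relevant norms, one concludes $\mathcal{A}_j^{E_j}(u,u)\lesssim \delta_j^{-2}\|u\|_{L^2(E_j)}^2$ provided we can pass between the degree-of-freedom norm and the $L^2$ norm.

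That passage is exactly where \Cref{norm_equivalence} enters and carries the essential scaling in $\delta_j$. By the first equivalence in \eqref{norms},
$$\delta_j^2 \sum_{i\in\mathcal{N}(E_j)} \big(\mathrm{dof}_i(u)\big)^2 \eqsim \|u\|_{L^2(E_j)}^2,$$
so the scaled sum of squared degrees of freedom is comparable to $\delta_j^{-2}\|u\|_{L^2(E_j)}^2$. Since both the stabilization term and (after the polynomial inverse estimate) the consistency term are bounded by $\sum_{i\in\mathcal{N}(E_j)}\big(\mathrm{dof}_i(u)\big)^2$ up to constants, the factor $\delta_j^{-2}$ appears precisely as required, yielding $\mathcal{A}_j^{E_j}(u,u)\lesssim \delta_j^{-2}\|u\|_{L^2(E_j)}^2$ and hence $\|\nabla u\|_{L^2(E_j)}\lesssim \delta_j^{-1}\|u\|_{L^2(E_j)}$.

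I expect the main obstacle to be the careful control of the projection terms when translating the energy $\mathcal{A}_j^{E_j}(u,u)$ into degrees of freedom: one must verify that both $\Pi^0_{0,E_j}\nabla u$ and the stabilization of $(I-\Pi^\nabla_{1,E_j})u$ are uniformly bounded by $\sum_i (\mathrm{dof}_i(u))^2$ with constants depending only on $\rho_0$ and not on $\delta_j$. This requires a scaling argument to a reference configuration together with the shape-regularity furnished by Assumption A\ref{mesh_assumption}, ensuring the polynomial inverse constant and the projection stability constants remain uniform across the mesh sequence. Once this uniformity is secured, the remaining steps are a direct application of the norm equivalences in \Cref{norm_equivalence} and the equivalence $|u|_{H^1(E_j)}^2\eqsim \mathcal{A}_j^{E_j}(u,u)$.
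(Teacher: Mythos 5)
The paper does not actually prove this statement---it is imported verbatim as \cite[Theorem 3.6]{chen2018some}---so the only meaningful question is whether your self-contained argument is sound. It is not: it is circular. The pivot of your proof is the norm equivalence $|u|_{H^{1}(E_j)}^{2}\eqsim\mathcal{A}_{j}^{E_j}(u,u)$, and specifically the lower stability bound $|u|_{H^{1}(E_j)}^{2}\lesssim\mathcal{A}_{j}^{E_j}(u,u)$, which for the dofi--dofi stabilization amounts to showing $|w|_{H^{1}(E_j)}^{2}\lesssim\sum_{i\in\mathcal{N}(E_j)}(\mathrm{dof}_{i}(w))^{2}$ for $w=(I-\Pi^{\nabla}_{1,E_j})u$. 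The standard route to that bound---and the one taken in \cite{chen2018some}, where it appears downstream of Theorem 3.6---is precisely $|w|_{H^{1}(E_j)}^{2}\lesssim\delta_{j}^{-2}\|w\|_{L^{2}(E_j)}^{2}\eqsim\sum_{i}(\mathrm{dof}_{i}(w))^{2}$, i.e., the inverse inequality you are trying to prove. A VEM function in the kernel of $\Pi^{\nabla}_{1,E_j}$ is not a polynomial, so no reference-configuration scaling argument hands you $|w|_{H^{1}}\lesssim\delta_{j}^{-1}\|w\|_{L^{2}}$ for free; that is exactly the content of the theorem. Your handling of the two pieces of $\mathcal{A}_{j}^{E_j}(u,u)$ is fine (polynomial inverse estimate for $\nabla\Pi^{\nabla}_{1,E_j}u$ together with the $L^{2}$-stability of $\Pi^{\nabla}_{1,E_j}=\Pi^{0}_{1,E_j}$ on the enhanced space for the consistency term; Proposition~\ref{norm_equivalence} applied to $(I-\Pi^{\nabla}_{1,E_j})u$ for the stabilization term), and it does yield $\mathcal{A}_{j}^{E_j}(u,u)\lesssim\delta_{j}^{-2}\|u\|_{L^{2}(E_j)}^{2}$; the gap is entirely in the step that converts $\mathcal{A}_{j}^{E_j}(u,u)$ back into $\|\nabla u\|_{L^{2}(E_j)}^{2}$.

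A non-circular proof has to work directly with the definition of the virtual space. The argument in \cite{chen2018some} integrates by parts, $\|\nabla u\|_{L^{2}(E_j)}^{2}=-(\Delta u,u)_{L^{2}(E_j)}+(\partial_{n}u,u)_{L^{2}(\partial E_j)}$, and exploits that $\Delta u\in\mathbb{P}_{1}(E_j)$ and that $u|_{\partial E_j}$ is an edgewise polynomial, so that a generalized polynomial inverse estimate ($\|\Delta u\|_{L^{2}(E_j)}\lesssim\delta_{j}^{-2}\|u\|_{L^{2}(E_j)}$) and edge trace/inverse inequalities control the two terms; shape regularity from Assumption A\ref{mesh_assumption} keeps the constants uniform. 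If you prefer to keep your structure, you would first need an independent proof of $|u|_{H^{1}(E_j)}^{2}\lesssim\mathcal{A}_{j}^{E_j}(u,u)$, which is at least as hard as the theorem itself.
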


Assuming $f \in H^{1}(\Omega)$, Hypothesis H\ref{hypothesis7H} results from   \eqref{equivalenceH1} and the following theorem reported in \cite{ahmad2013equivalent}.
\begin{theorem}\cite[Theorem 3]{ahmad2013equivalent}
Let $u$ be the solution to the problem  $\mathcal{A}(u,u) = (f,v)_{L^{2}(\Omega)}, \ \forall v \in V$ and let $u_{i} \in V_{i}, \ i = j-1, j$ be the solution to the discrete problem $\mathcal{A}_{i}(u_i,v) = (f,v)_{L^{2}(\Omega)}, \ \forall v \in V_{i}$. Assume further that $\Omega$ is convex, that the right-hand side $f$ belongs to $H^{1}(\Omega)$, and that the exact solution $u$ belongs to $H^{2}(\Omega)$. Then the following estimate holds true
\begin{equation*}
\|u - u_{i} \|_{L^{2}(\Omega)} + \delta_i \|u - u_{i} \|_{H^{1}(\Omega)} \lesssim \delta_{i}^{2} |u|_{H^{2}(\Omega)},
\end{equation*}
where the hidden constant is independent of $\delta_i$. 
\end{theorem}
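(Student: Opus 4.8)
The plan is to prove the two contributions separately: first the energy ($H^1$) estimate, and then the $L^2$ estimate by a duality (Aubin--Nitsche) argument, following the by now standard strategy for the a priori analysis of the Virtual Element Method. Throughout I would work with the computable right-hand side $\langle f,\cdot\rangle$ of \eqref{right_hand_side} and exploit two structural facts: by the norm equivalence \eqref{equivalenceH1} the discrete form $\mathcal{A}_i(\cdot,\cdot)$ is both coercive and continuous with respect to $|\cdot|_{H^1(\Omega)}$, and it is polynomially consistent, i.e.\ on each $E_i$ the stabilization vanishes on $\mathbb{P}_1(E_i)$ and $\mathcal{A}_i^{E_i}(p,v)=(\mu\nabla p,\nabla v)_{L^2(E_i)}$ for every $p\in\mathbb{P}_1(E_i)$.

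For the energy estimate I would split $u-u_i=(u-\mathcal{I}^i u)+(\mathcal{I}^i u-u_i)$ with $\mathcal{I}^i$ the interpolant of \eqref{interpolation_operator}, set $\chi:=\mathcal{I}^i u-u_i\in V_i$, and use coercivity to write $|\chi|_{H^1(\Omega)}^2\lesssim\mathcal{A}_i(\chi,\chi)=\mathcal{A}_i(\mathcal{I}^i u,\chi)-\langle f,\chi\rangle$. Inserting an element-wise polynomial best approximation $u_\pi$ of $u$ and invoking polynomial consistency together with the continuous equation $\mathcal{A}(u,\chi)=(f,\chi)_{L^2(\Omega)}$ (legitimate since $\chi\in V_i\subset V$), the right-hand side reorganizes into (i) an interpolation error $\mathcal{A}_i(\mathcal{I}^i u-u_\pi,\chi)$, (ii) a polynomial approximation error $(\mu\nabla(u_\pi-u),\nabla\chi)_{L^2(\Omega)}$, and (iii) the load consistency error $(f,\chi)_{L^2(\Omega)}-\langle f,\chi\rangle$. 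Bounding (i) and (ii) by \Cref{interpolant_estimates} and standard polynomial approximation (cf.\ \Cref{lemma_projector}), and (iii) by the approximation properties of $\Pi^0_{0,E_i}$, each factor is $\mathcal{O}(\delta_i)|u|_{H^2(\Omega)}$ times $|\chi|_{H^1(\Omega)}$; dividing by $|\chi|_{H^1(\Omega)}$ and a triangle inequality then yield $\vertiii{u-u_i}_{1,i}\lesssim\delta_i|u|_{H^2(\Omega)}$.

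For the $L^2$ estimate I would introduce the dual solution $w\in V$ of $\mathcal{A}(w,v)=(u-u_i,v)_{L^2(\Omega)}$ for all $v\in V$, which by convexity of $\Omega$ and elliptic regularity \eqref{elliptic_regularity} satisfies $w\in H^2(\Omega)$ with $\|w\|_{H^2(\Omega)}\lesssim\|u-u_i\|_{L^2(\Omega)}$. Taking $v=u-u_i$ gives $\|u-u_i\|_{L^2(\Omega)}^2=\mathcal{A}(w,u-u_i)$, and I would then systematically insert the interpolant $\mathcal{I}^i w$ and element-wise polynomial approximations of both $u$ and $w$, converting the right-hand side into a sum in which every surviving term is the product of a first-order primal factor ($\mathcal{O}(\delta_i)|u|_{H^2(\Omega)}$, from the energy analysis) and a first-order dual factor ($\mathcal{O}(\delta_i)\|w\|_{H^2(\Omega)}$), hence $\mathcal{O}(\delta_i^2)|u|_{H^2(\Omega)}\|u-u_i\|_{L^2(\Omega)}$. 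Cancelling one power of $\|u-u_i\|_{L^2(\Omega)}$ delivers the claimed second-order bound.

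The main obstacle is not a single deep estimate but the careful bookkeeping of the variational crimes, namely the consistency errors introduced by (a) the stabilization term, which is exact only on polynomials, and (b) the computable load $\langle f,\cdot\rangle=\sum_{E_i}(f,\Pi^0_{0,E_i}\cdot)_{L^2(E_i)}$; one must verify that in both arguments these perturbations are of the correct order. The delicate point is the $L^2$ estimate: a crude bound on the load consistency term would only give $\mathcal{O}(\delta_i)$, and it is precisely here that the hypothesis $f\in H^1(\Omega)$ enters, since subtracting the element mean of $f$ in $(f,\cdot-\Pi^0_{0,E_i}\cdot)_{L^2(E_i)}$ supplies the extra factor $\delta_i|f|_{H^1(E_i)}$ needed to recover the full $\mathcal{O}(\delta_i^2)$ convergence.
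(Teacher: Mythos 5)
This statement is not proved in the paper at all: it is imported verbatim from \cite{ahmad2013equivalent} (Theorem 3 there), so the only meaningful comparison is with that reference, whose proof is exactly the standard energy-plus-duality argument you outline (Strang-type consistency estimate for the $H^1$ bound, Aubin--Nitsche with convexity-based $H^2$ regularity for the $L^2$ bound, and the observation that $f\in H^1(\Omega)$ is what upgrades the load consistency term $(f,\chi-\Pi^0_{0,E_i}\chi)_{L^2(E_i)}$ to second order). Your sketch is correct and takes essentially that same route; the one cosmetic point worth noting is that the paper's restatement writes the discrete right-hand side as $(f,v)_{L^2(\Omega)}$ whereas both the actual discrete problem \eqref{PoissonVEM} and the cited theorem use the computable load $\langle f,v\rangle$, which is the version you (correctly) analyse.
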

 
\begin{remark}
We underline that with respect to the standard BPX theory, we need to require $f \in H^{1}(\Omega)$ in order to have Hypothesis H\ref{hypothesis7H} satisfied.
\end{remark}

Now, we show that the stability result of the prolongation operator $I_{j-1}^{j}$ in Hypothesis H\ref{hypothesis4H} holds true.
\begin{proposition}\label{stability_prolongation_operator}
Let $I^{j}_{j-1}$ be the prolongation operator defined as in \eqref{prolongation_operator}. The stability estimate Hypothesis H\ref{hypothesis4H} holds true.
\end{proposition}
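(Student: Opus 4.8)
We must show that the prolongation operator $I^{j}_{j-1}: V_{j-1}\to V_{j}$ defined in \eqref{prolongation_operator} satisfies the $L^2$-stability estimate of Hypothesis~H4, i.e.
$$\|I^{j}_{j-1}v\|_{L^2(\Omega)}\lesssim \|v\|_{L^2(\Omega)}\qquad\forall v\in V_{j-1}.$$

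**Strategy.** Since both the prolongation and the $L^2$-norm decompose elementwise over the coarse mesh, it suffices to prove the local estimate $\|I^{j}_{j-1}v\|_{L^2(E_{j-1})}\lesssim\|v\|_{L^2(E_{j-1})}$ on each coarse element $E_{j-1}\in\mathcal{T}_{j-1}$ and then sum. The natural tool is Proposition~\ref{norm_equivalence}: the norm equivalences there convert $L^2$-norms of VEM functions into weighted $\ell^2$-sums of degrees of freedom, at which point the estimate becomes purely algebraic.

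**Plan.** First I would fix a coarse element $E_{j-1}$ and set $w := I^{j}_{j-1}v|_{E_{j-1}}\in V_j^{E_{j-1}}$. Applying the second norm equivalence of Proposition~\ref{norm_equivalence} to $w$ gives
$$\|w\|_{L^2(E_{j-1})}^2\lesssim \delta_j^2\sum_{i\in\mathcal{N}(\mathcal{T}_{E_{j-1}})}\#\mathcal{C}_{E_{j-1}}^{x_i}\,|\mathrm{dof}_i(w)|^2,$$
so I need to bound the right-hand side. By the definition \eqref{prolongation_operator} of the prolongation, the degrees of freedom of $w$ split into two groups: for $i\in\mathcal{N}(E_{j-1})$ we have $\mathrm{dof}_i(w)=\mathrm{dof}_i(v)=v(x_i)$, while for $i\in\mathcal{N}(\mathcal{T}_{E_{j-1}}\backslash E_{j-1})$ we have $\mathrm{dof}_i(w)=\mathrm{dof}_i(\Pi^{0}_{1,E_{j-1}}v)=(\Pi^{0}_{1,E_{j-1}}v)(x_i)$. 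I would therefore estimate the two contributions separately.

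**Key steps.** For the first (boundary-node) group, the values $v(x_i)$ are exactly the degrees of freedom of $v|_{E_{j-1}}\in V^{E_{j-1}}$, so the first norm equivalence of Proposition~\ref{norm_equivalence}, applied on the coarse element $E_{j-1}$ (whose size is $\delta_{j-1}\eqsim\delta_j$ by the bounded-variation hypothesis in Assumption~A\ref{quasi_uniform_mesh}), controls this sum by $\delta_{j-1}^{-2}\|v\|_{L^2(E_{j-1})}^2\eqsim\delta_j^{-2}\|v\|^2_{L^2(E_{j-1})}$; the factor $\#\mathcal{C}_{E_{j-1}}^{x_i}$ is uniformly bounded under Assumption~A\ref{mesh_assumption}, so it is harmless. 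For the second (interior-node) group, the values are point values of the polynomial $p:=\Pi^{0}_{1,E_{j-1}}v\in\mathbb{P}_1(E_{j-1})$; since $p$ is a polynomial evaluated at a uniformly bounded number of nodes, an inverse-type estimate on $\mathbb{P}_1(E_{j-1})$ bounds $\delta_j^2\sum_i|p(x_i)|^2$ by $\|p\|_{L^2(E_{j-1})}^2$, and the $L^2$-stability of the projection $\Pi^{0}_{1,E_{j-1}}$ gives $\|p\|_{L^2(E_{j-1})}\le\|v\|_{L^2(E_{j-1})}$. Combining the two groups yields the local bound, and summing over $E_{j-1}\in\mathcal{T}_{j-1}$ completes the proof.

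**Main obstacle.** The delicate point is the second group: one must control the \emph{nodal values} of the $L^2$-projection $\Pi^{0}_{1,E_{j-1}}v$ at the interior fine nodes by the $L^2$-norm of $v$. This requires a scaled equivalence between the nodal-value $\ell^2$-norm and the $L^2$-norm on the polynomial space $\mathbb{P}_1(E_{j-1})$ (with the correct power of $\delta_j$), together with uniform control of the cardinalities $\#\mathcal{C}_{E_{j-1}}^{x_i}$ and of the number of fine elements per coarse element, all of which rest on the mesh-regularity Assumptions~A\ref{mesh_assumption} and A\ref{quasi_uniform_mesh}. Keeping the powers of the mesh size consistent across the two norm equivalences (one posed on fine-scale $\delta_j$, one effectively on the coarse element $E_{j-1}$) is where the bounded-variation hypothesis $\delta_{j-1}\eqsim\delta_j$ is essential.
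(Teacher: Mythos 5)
Your proposal is correct and follows essentially the same route as the paper: localize to a coarse element, apply the norm equivalences of Proposition~\ref{norm_equivalence} to convert to degree-of-freedom sums, split the nodes into $\mathcal{N}(E_{j-1})$ and $\mathcal{N}(\mathcal{T}_{E_{j-1}}\backslash E_{j-1})$, handle the first group via the coarse-element norm equivalence and the second via a nodal/$L^2$ equivalence for the polynomial projection together with the $L^2$-stability of $\Pi^{0}_{1,E_{j-1}}$, then sum over elements. The only cosmetic difference is that the paper realizes your ``inverse-type estimate on $\mathbb{P}_1(E_{j-1})$'' by padding the interior-node sum to all of $\mathcal{N}(\mathcal{T}_{E_{j-1}})$ and reusing Proposition~\ref{norm_equivalence} on the polynomial viewed as an element of $V_j^{E_{j-1}}$.
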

\begin{proof} To begin with, let us focus on the element $E_{j-1} \in \mathcal{T}_{j-1}$. We show that 
\begin{equation*}
\| I_{j-1}^{j} u_{j-1} \|_{L^{2}(E_{j-1})} \lesssim \| u_{j-1} \|_{L^{2}(E_{j-1})} \ \ \ \forall u_{j-1} \in V_{j-1}.
\end{equation*}
We apply \cref{norm_equivalence}  to $\| I_{j-1}^{j} u_{j-1} \|_{L^{2}(E_{j-1})}$ and we use the definition of the prolongation operator $I_{j-1}^{j}$. Moreover, we define $\#\mathcal{C}_{E_{j-1}} := \max_{i \in \mathcal{N}(\mathcal{T}_{E_{j-1}})} \# \mathcal{C}^{x_i}_{E_{j-1}}$
\begin{equation*}
\begin{aligned}
&\| I_{j-1}^{j} u_{j-1} \|_{L^{2}(E_{j-1})}^2 \lesssim \delta_{j}^{2} \sum_{i \in \mathcal{N}(\mathcal{T}_{E_{j-1}})} \# \mathcal{C}^{x_i}_{E_{j-1}} | \text{dof}_{i} (I_{j-1}^{j} u_{j-1})|^{2} \\
& = \delta_{j}^{2} \sum_{i \in \mathcal{N}(\mathcal{T}_{E_{j-1}})} \# \mathcal{C}^{x_i}_{E_{j-1}} | I_{j-1}^{j} u_{j-1}(x_i)|^{2}  \lesssim \delta_{j}^{2}  \# \mathcal{C}_{E_{j-1}} \sum_{i \in \mathcal{N}(\mathcal{T}_{E_{j-1}})}  | I_{j-1}^{j} u_{j-1}(x_i)|^{2}  \\
&= \delta_{j}^{2}  \# \mathcal{C}_{E_{j-1}} \Big (\sum_{i \in \mathcal{N}(E_{j-1})} |u_{j-1}(x_i)|^2  + \sum_{i \in \mathcal{N}(\mathcal{T}_{E_{j-1}} \backslash E_{j-1})} |\Pi^{0}_{1, E_{j-1}} u_{j-1}(x_i)|^{2} \Big ).
\end{aligned}
\end{equation*}

Next, we bound each of the two terms on the right-hand side separately. For the first one, we apply \cref{norm_equivalence} for $u_{j-1} \in {V}^{E_{j-1}}, E_{j-1} \in \mathcal{T}_{j-1}$ to obtain
\begin{equation} \label{estimate1}
\begin{aligned}
& \sum_{i \in \mathcal{N}(E_{j-1})} |u_{j-1}(x_i)|^2  \lesssim  \frac{1}{\delta_{j-1}^2}  \| u_{j-1}\|_{L^{2}(E_{j-1})}^2.
\end{aligned}
\end{equation}
For the second one, firstly, we add positive quantities and then we make use of \cref{norm_equivalence} and of the $L^{2}(E_{j-1})$-stability of the projection operator $\Pi^{0}_{1, E_{j-1}}$
\begin{equation}\label{estimate2}
\begin{aligned}
&  \sum_{i \in \mathcal{N}(\mathcal{T}_{E_{j-1}} \backslash E_{j-1})} |\Pi^{0}_{1, E_{j-1}} u_{j-1}(x_i)|^{2}   \lesssim \sum_{i \in \mathcal{N}(\mathcal{T}_{E_{j-1}})} |\Pi^{0}_{1, E_{j-1}} u_{j-1}(x_i)|^{2} \\
&  \lesssim  \sum_{i \in \mathcal{N}(\mathcal{T}_{E_{j-1}})} \# \mathcal{C}^{x_i}_{E_{j-1}} |\Pi^{0}_{1, E_{j-1}} u_{j-1}(x_i)|^{2}   \lesssim \frac{1}{\delta_j^2} \|\Pi^{0}_{1, E_{j-1}} u_{j-1} \|^2_{L^2(E_{j-1})}  \lesssim  \frac{1}{\delta_j^2} \| u_{j-1}\|_{L^{2}(E_{j-1})}^2.
\end{aligned}
\end{equation}
Estimate \eqref{estimate1} together with estimate \eqref{estimate2} leads to
\begin{equation*}
\| I_{j-1}^{j} u_{j-1} \|_{L^{2}(E_{j-1})}^2 \lesssim \Big [ \Big ( \frac{\delta_{j}}{\delta_{j-1}} \Big)^2 + 1 \Big ] \# \mathcal{C}_{E_{j-1}}  \| u_{j-1}\|_{L^{2}(E_{j-1})}^2.
\end{equation*}
Finally, summing on all $E_{j-1} \in \mathcal{T}_{j-1}$, we obtain
\begin{equation*}
\| I_{j-1}^{j} u_{j-1} \|_{L^{2}(\Omega)} \leq C_{H4}  \| u_{j-1}\|_{L^{2}(\Omega)},
\end{equation*}
where $C_{H4} = C_{H4} \Big (\frac{\delta_{j}}{\delta_{j-1}}, \# \mathcal{C}_{E_{j-1}} \Big )$ and the proof is complete.
\end{proof}

In order to verify Hypothesis H\ref{hypothesis8H} we first prove the following preliminary results. 

\begin{lemma}\label{lemma_for_hypothesis_8}
For any $u_{j-1} \in V_{j}^{E_{j-1}}, \ E_{j-1} \in \mathcal{T}_{j-1}$, the following estimate holds
\begin{equation}\label{lemma1}
\| \Pi^{0}_{1, E_{j-1}} u_{j-1} - I_{j-1}^{j} u_{j-1} \|_{L^{2}(E_{j-1})} \lesssim \| \Pi^{0}_{1, E_{j-1}} u_{j-1} - u_{j-1} \|_{L^{2}(E_{j-1})}, 
\end{equation}
where the hidden constant depends on $\frac{\delta_{j}}{\delta_{j-1}}$ and $\# \mathcal{C}_{E_{j-1}}$.
\end{lemma}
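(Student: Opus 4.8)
The plan is to exploit the explicit structure of the prolongation operator $I_{j-1}^{j}$ in \eqref{prolongation_operator}, reducing everything to the norm equivalences of \Cref{norm_equivalence}. The key observation is that $I_{j-1}^{j} u_{j-1}$ and $\Pi^{0}_{1,E_{j-1}} u_{j-1}$ are both VEM functions on the patch $V_{j}^{E_{j-1}}$, so their difference lives in $V_{j}^{E_{j-1}}$ and its $L^{2}$-norm is controlled by the degrees of freedom via the second equivalence in \Cref{norm_equivalence}.

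First I would write, for $x_i$ a node of the patch, the nodal values of $I_{j-1}^{j}u_{j-1} - \Pi^{0}_{1,E_{j-1}}u_{j-1}$. By the definition \eqref{prolongation_operator}, at a node $x_i$ with $i \in \mathcal{N}(\mathcal{T}_{E_{j-1}} \backslash E_{j-1})$ the prolongation equals $\Pi^{0}_{1,E_{j-1}}u_{j-1}(x_i)$, so the difference vanishes there; the only surviving contributions come from the nodes $i \in \mathcal{N}(E_{j-1})$, where the difference equals $u_{j-1}(x_i) - \Pi^{0}_{1,E_{j-1}}u_{j-1}(x_i)$. This is the crucial cancellation. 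Applying the lower (equivalently upper) bound of \Cref{norm_equivalence} to $\Pi^{0}_{1,E_{j-1}}u_{j-1} - I_{j-1}^{j}u_{j-1} \in V_{j}^{E_{j-1}}$ then gives
\begin{equation*}
\| \Pi^{0}_{1, E_{j-1}} u_{j-1} - I_{j-1}^{j} u_{j-1} \|^{2}_{L^{2}(E_{j-1})} \lesssim \delta_{j}^{2} \# \mathcal{C}_{E_{j-1}} \sum_{i \in \mathcal{N}(E_{j-1})} | u_{j-1}(x_i) - \Pi^{0}_{1, E_{j-1}} u_{j-1}(x_i) |^{2}.
\end{equation*}

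Next I would recognize that $u_{j-1} - \Pi^{0}_{1,E_{j-1}}u_{j-1}$ is itself a function in $V_{j}^{E_{j-1}}$ (a difference of a patch VEM function and a polynomial), so I can again invoke \Cref{norm_equivalence} — this time the second equivalence in the reverse direction — to bound the nodal sum on the right by $\delta_{j}^{-2} \| u_{j-1} - \Pi^{0}_{1,E_{j-1}}u_{j-1} \|^{2}_{L^{2}(E_{j-1})}$, up to the factor $\# \mathcal{C}_{E_{j-1}}$. Combining the two applications the $\delta_{j}^{2}$ factors cancel, and only the mesh-regularity constant $\# \mathcal{C}_{E_{j-1}}$ and the ratio $\delta_j/\delta_{j-1}$ survive, yielding \eqref{lemma1} with the stated dependence of the hidden constant.

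The main obstacle, and the step requiring the most care, is the bookkeeping over the two disjoint index sets $\mathcal{N}(E_{j-1})$ and $\mathcal{N}(\mathcal{T}_{E_{j-1}} \backslash E_{j-1})$: one must verify precisely that the prolongation reproduces $\Pi^{0}_{1,E_{j-1}}u_{j-1}$ at the interior patch nodes so that those terms drop, while keeping track of the multiplicity weights $\# \mathcal{C}^{x_i}_{E_{j-1}}$ that appear in \Cref{norm_equivalence} but not in the raw nodal sums. Once the cancellation at the $\mathcal{N}(\mathcal{T}_{E_{j-1}} \backslash E_{j-1})$ nodes is established, the rest is a routine double application of the norm equivalence, entirely analogous to the argument already carried out in the proof of \Cref{stability_prolongation_operator}.
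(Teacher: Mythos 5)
Your argument is correct and follows the paper's proof essentially verbatim: both apply the patch norm equivalence of \Cref{norm_equivalence} to $\Pi^{0}_{1,E_{j-1}}u_{j-1}-I_{j-1}^{j}u_{j-1}\in V_{j}^{E_{j-1}}$, observe that the degrees of freedom cancel at the nodes indexed by $\mathcal{N}(\mathcal{T}_{E_{j-1}}\backslash E_{j-1})$ by the definition \eqref{prolongation_operator}, and are left with the nodal sum over $\mathcal{N}(E_{j-1})$ of $|u_{j-1}(x_i)-\Pi^{0}_{1,E_{j-1}}u_{j-1}(x_i)|^{2}$. The one place you diverge is in converting that residual sum back into an $L^{2}$ norm: the paper invokes the \emph{first} (single-element, level-$(j-1)$) equivalence in \Cref{norm_equivalence}, applied to $u_{j-1}-\Pi^{0}_{1,E_{j-1}}u_{j-1}$ viewed in $V^{E_{j-1}}$, which is precisely what produces the factor $(\delta_{j}/\delta_{j-1})^{2}$ in the hidden constant; you instead reuse the patch (level-$j$) equivalence, which yields a constant depending on $\#\mathcal{C}_{E_{j-1}}$ alone --- so the ratio $\delta_{j}/\delta_{j-1}$ you claim ``survives'' at the end does not in fact appear in your bound. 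Your variant is legitimate under the lemma's literal hypothesis $u_{j-1}\in V_{j}^{E_{j-1}}$ (both $u_{j-1}$ and the linear polynomial $\Pi^{0}_{1,E_{j-1}}u_{j-1}$ then lie in the patch space), but note that where the lemma is actually used, in the proof of Hypothesis H\ref{hypothesis8H}, it is applied to $\mathcal{I}^{j-1}w$, whose restriction to $E_{j-1}$ belongs to the coarse local space $V^{E_{j-1}}$ and not, in general, to $V_{j}^{E_{j-1}}$; for that application your second use of the patch equivalence is not available, and the paper's level-$(j-1)$ step is the one that goes through.
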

\begin{proof}
To begin with, we apply \cref{norm_equivalence} to the left hand side term of \eqref{lemma1} and then we use the definition of the prolongation operator $I_{j-1}^{j}$ 
\begin{equation}\label{lemma1_1}
\begin{aligned}
& \| \Pi^{0}_{1, E_{j-1}} u_{j-1} - I_{j-1}^{j} u_{j-1}\|^2_{L^{2}(E_{j-1})}  \lesssim  \delta_{j}^{2} \sum_{i \in\mathcal{N}(\mathcal{T}_{E_{j-1}})} \# \mathcal{C}^{x_i}_{E_{j-1}} | \text{dof}_{i}( \Pi^{0}_{1, E_{j-1}} u_{j-1} - I_{j-1}^{j} u_{j-1})|^2 \\
& = \delta_{j}^{2} \sum_{i \in\mathcal{N}(\mathcal{T}_{E_{j-1}})} \# \mathcal{C}^{x_i}_{E_{j-1}} | \Pi^{0}_{1, E_{j-1}} u_{j-1}(x_i) - I_{j-1}^{j} u_{j-1}(x_i) |^2 \\
& \lesssim \delta_{j}^{2} \max_{i \in \mathcal{N}(\mathcal{T}_{E_{j-1}})} \# \mathcal{C}^{x_i}_{E_{j-1}} \sum_{i \in\mathcal{N}(\mathcal{T}_{E_{j-1}})}  | \Pi^{0}_{1, E_{j-1}} u_{j-1}(x_i) - I_{j-1}^{j} u_{j-1}(x_i) |^2 \\
& \lesssim \delta_{j}^{2} \# \mathcal{C}_{E_{j-1}} \Big ( \sum_{i \in \mathcal{N}(E_{j-1})}  | \Pi^{0}_{1, E_{j-1}} u_{j-1}(x_i) - u_{j-1}(x_i)|^2 \\
& +  \sum_{i \in\mathcal{N}(\mathcal{T}_{E_{j-1}} \backslash E_{j-1})}  | \Pi^{0}_{1, E_{j-1}} u_{j-1}(x_i) - \Pi^{0}_{1, E_{j-1}} u_{j-1}(x_i)|^2 \Big ).
\end{aligned}
\end{equation}
The second term of the last inequality of \eqref{lemma1_1} is zero. Therefore, we only need to estimate the first term. Using \cref{norm_equivalence}, we obtain
\begin{equation*}
\begin{aligned}
& \delta_{j}^{2} \# \mathcal{C}_{E_{j-1}}  \sum_{i \in \mathcal{N}(E_{j-1})} | \Pi^{0}_{1, E_{j-1}} u_{j-1}(x_i) - u_{j-1}(x_i)|^2 \\
& \lesssim  \# \mathcal{C}_{E_{j-1}} \Big (\frac{\delta_{j}}{\delta_{j-1}} \Big )^2 \| u_{j-1} - \Pi^{0}_{1, E_{j-1}} u_{j-1} \|_{L^{2}(E_{j-1})}^2  \lesssim \| u_{j-1} - \Pi^{0}_{1, E_{j-1}} u_{j-1} \|_{L^{2}(E_{j-1})}^2,
\end{aligned}
\end{equation*}
where the hidden constant depends on $\frac{\delta_{j}}{\delta_{j-1}}$ and  $\# \mathcal{C}_{E_{j-1}}$.
\end{proof}

\begin{lemma}\label{lemma_for_hypothesis_8_2}
For any $w \in H^{2}(E_{j-1}), \ E_{j-1} \in \mathcal{T}_{j-1}$, the following estimate holds
\begin{equation}\label{proposition2_1_3_bis}
\| \Pi^{0}_{1, E_{j-1}} w - \Pi^{0}_{1, E_{j-1}} \mathcal{I}^{j-1} w \|_{L^{2}(E_{j-1})}^2  \lesssim  \delta_{j}^{4} \| w \|_{H^{2}(E_{j-1})}^2,
\end{equation}
where the hidden constant depends on  $\frac{\delta_{j-1}}{\delta_{j}}$.
\end{lemma}
\begin{proof}
First, adding and subtracting $w - \mathcal{I}^{j-1}w$ and applying the triangle inequality yields
\begin{equation}\label{proposition2_1_3}
\begin{aligned}
& \| \Pi^{0}_{1, E_{j-1}} w - \Pi^{0}_{1, E_{j-1}} \mathcal{I}^{j-1} w \|_{L^{2}(E_{j-1})} \leq \| w - \mathcal{I}^{j-1}w \|_{L^{2}(E_{j-1})} \\
&  + \| (I - \Pi^{0}_{1, E_{j-1}}) ( w - \mathcal{I}^{j-1}w) \|_{L^{2}(E_{j-1})}.
\end{aligned}
\end{equation}
Next, we bound each of the two terms on the right-hand side of \eqref{proposition2_1_3} separately. For the first term, since $w \in H^{2}(E_{j-1})$, we can apply \cref{interpolant_estimates}. Then
\begin{equation} \label{proposition2_2_1}
\begin{aligned}
& \| \mathcal{I}^{j-1} w - w \|_{L^{2} (E_{j-1})}^2  \lesssim  \delta_{j-1}^{4} \|w \|_{H^{2}(E_{j-1})}^{2}  \lesssim \Big (\frac{\delta_{j-1}}{\delta_{j}} \Big)^4 \delta_{j}^{4} \| w \|_{H^{2}(E_{j-1})}^2 \lesssim  \delta_{j}^{4} \| w \|_{H^{2}(E_{j-1})}^2.
\end{aligned}
\end{equation}
For the second term, we notice that $\mathcal{I}^{j-1} w \in V_{j-1} \subset H^{1}(\Omega)$ and $w \in H^{2}(\Omega)$. Therefore, $w - \mathcal{I}^{j-1} w\in H^{1}(E_{j-1})$. Consequently, we can apply \cref{lemma_projector}
\begin{equation}\label{proposition2_2_2}
\| (I - \Pi^{0}_{1, E_{j-1}}) ( w - \mathcal{I}^{j-1}w) \|_{L^{2}(E_{j-1})}^2 \lesssim \delta_{j-1}^2 | w - \mathcal{I}^{j-1} w |_{H^{1}(E_{j-1})}^2.
\end{equation}
Since $w \in H^{2}(E_{j-1})$, we can apply \cref{interpolant_estimates} and we obtain
\begin{equation}\label{proposition2_2_2_bis}
\begin{aligned}
& \delta_{j-1}^2 | w - \mathcal{I}^{j-1} w |_{H^{1}(E_{j-1})^2}^2 \lesssim \delta_{j-1}^{4} \|w \|_{H^{2}(E_{j-1})}^2  \lesssim \Big (\frac{\delta_{j-1}}{\delta_{j}} \Big)^4 \delta_{j}^{4} \| w \|_{H^{2}(E_{j-1})}^2 \lesssim \delta_{j}^{4} \| w \|_{H^{2}(E_{j-1})}^2.
\end{aligned}
\end{equation}
Combining estimates \eqref{proposition2_2_1}, \eqref{proposition2_2_2} and \eqref{proposition2_2_2_bis} leads to \eqref{proposition2_1_3_bis}.
\end{proof}

Using the previous lemmata, we prove that the following holds true. 
\begin{proposition}
Let $\mathcal{I}^{j}$ be the interpolation operator defined in \eqref{interpolation_operator}. Then, Hypothesis H\ref{hypothesis8H} holds true.
\end{proposition}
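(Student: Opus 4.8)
The goal is to establish Hypothesis H8, namely the bound
\[
\| \mathcal{I}^{j} w - I_{j-1}^{j} \mathcal{I}^{j-1} w \|_{L^2(\Omega)} \lesssim \delta_{j}^{2} \| w \|_{H^{2}(\Omega)} \quad \forall w \in H^2(\Omega).
\]
The plan is to work element by element on the coarse tessellation, so I would fix $E_{j-1} \in \mathcal{T}_{j-1}$ and bound $\| \mathcal{I}^{j} w - I_{j-1}^{j} \mathcal{I}^{j-1} w \|_{L^2(E_{j-1})}$, then sum over all coarse elements at the end. The natural strategy is to insert the local $L^2$-projection $\Pi^{0}_{1,E_{j-1}} \mathcal{I}^{j-1}w$ as an intermediate quantity and split via the triangle inequality into
\[
\| \mathcal{I}^{j} w - I_{j-1}^{j} \mathcal{I}^{j-1} w \|_{L^2(E_{j-1})}
\leq \| \mathcal{I}^{j} w - \Pi^{0}_{1,E_{j-1}} \mathcal{I}^{j-1}w \|_{L^2(E_{j-1})}
+ \| \Pi^{0}_{1,E_{j-1}} \mathcal{I}^{j-1}w - I_{j-1}^{j}\mathcal{I}^{j-1}w \|_{L^2(E_{j-1})}.
\]
The second term is handled directly by \Cref{lemma_for_hypothesis_8} applied to $u_{j-1} = \mathcal{I}^{j-1}w \in V_{j}^{E_{j-1}}$, which reduces it to $\|\Pi^{0}_{1,E_{j-1}}\mathcal{I}^{j-1}w - \mathcal{I}^{j-1}w\|_{L^2(E_{j-1})}$; I can then further insert $w$ and $\Pi^{0}_{1,E_{j-1}}w$ and control everything by a combination of \Cref{lemma_for_hypothesis_8_2}, \Cref{interpolant_estimates} (the interpolation estimate on $\mathcal{I}^{j-1}$), and \Cref{lemma_projector} for the projection error, each yielding the target order $\delta_j^2 \|w\|_{H^2(E_{j-1})}$.

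For the first term, I would insert $\Pi^{0}_{1,E_{j-1}}w$ and split again,
\[
\| \mathcal{I}^{j} w - \Pi^{0}_{1,E_{j-1}} \mathcal{I}^{j-1}w \|_{L^2(E_{j-1})}
\leq \|\mathcal{I}^{j}w - w\|_{L^2(E_{j-1})}
+ \|w - \Pi^{0}_{1,E_{j-1}}w\|_{L^2(E_{j-1})}
+ \|\Pi^{0}_{1,E_{j-1}}w - \Pi^{0}_{1,E_{j-1}}\mathcal{I}^{j-1}w\|_{L^2(E_{j-1})}.
\]
The first of these is $\delta_j^2$-order by \Cref{interpolant_estimates} (using the bounded-variation hypothesis to replace $\delta_j$ by $\delta_{j-1}$), the second by \Cref{lemma_projector} with $s=2$, and the third is exactly the statement of \Cref{lemma_for_hypothesis_8_2}. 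All three contributions are therefore of order $\delta_j^{2}\|w\|_{H^2(E_{j-1})}$, and the hidden constants depend only on $\rho_0$, the ratio $\delta_j/\delta_{j-1}$ (uniformly bounded by Assumption A\ref{quasi_uniform_mesh}), and $\#\mathcal{C}_{E_{j-1}}$.

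Finally, I would square the local estimate, sum over $E_{j-1}\in\mathcal{T}_{j-1}$, and use that the local $H^2$-seminorms (and norms) combine into the global $\|w\|_{H^2(\Omega)}^2$, giving the desired global bound with $C_{H7}$ depending on the quasi-uniformity constants. I expect the main obstacle to be purely organizational rather than deep: the delicate point is ensuring that $\mathcal{I}^{j-1}w$ genuinely lies in $V_{j}^{E_{j-1}}$ so that \Cref{lemma_for_hypothesis_8} and \Cref{norm_equivalence} apply — this follows because $\mathcal{I}^{j-1}w \in V_{j-1}$ restricts to an element of $V^{E_{j-1}} \subset V_{j}^{E_{j-1}}$ — and in correctly bookkeeping the factors of $\delta_j/\delta_{j-1}$ across the several applications of the interpolation and projection estimates so that every term collapses to the single target order $\delta_j^2$.
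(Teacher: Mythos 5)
Your argument is correct and essentially the same as the paper's: you insert the same intermediate quantities $w$, $\Pi^{0}_{1,E_{j-1}}w$ and $\Pi^{0}_{1,E_{j-1}}\mathcal{I}^{j-1}w$ (merely applying the triangle inequality in two stages rather than in one four-term split), bound each resulting piece with the same ingredients --- \Cref{interpolant_estimates}, \Cref{lemma_projector}, \Cref{lemma_for_hypothesis_8_2} and \Cref{lemma_for_hypothesis_8} --- and sum over $E_{j-1}\in\mathcal{T}_{j-1}$ exactly as in the paper. One small caveat: the inclusion $V^{E_{j-1}}\subset V_{j}^{E_{j-1}}$ you invoke to justify applying \Cref{lemma_for_hypothesis_8} to $\mathcal{I}^{j-1}w$ is false in general (this is precisely the non-nestedness of the virtual spaces), but the point is immaterial since that lemma is, here and in the paper, applied to a coarse VEM function on which the prolongation $I_{j-1}^{j}$ is defined.
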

\begin{proof} Let us focus on the element $E_{j-1} \in \mathcal{T}_{j-1}.$ We want to show that
\begin{equation}\label{proposition2}
\| \mathcal{I}^{j} w - I_{j-1}^{j} \mathcal{I}^{j-1} w \|_{L^{2}(E_{j-1})} \lesssim \delta_{j}^{2} \| w \|_{H^{2}(E_{j-1})} \ \ \ \forall w \in H^{2}(\Omega).
\end{equation}
By adding and subtracting  $w - \Pi^{0}_{1, E_{j-1}} w +  \Pi^{0}_{1, E_{j-1}} \mathcal{I}^{j-1} w $ and applying the triangle inequality, we obtain
\begin{equation}\label{proposition2_1}
\begin{aligned}
&\| \mathcal{I}^{j} w - I_{j-1}^{j} \mathcal{I}^{j-1} w \|_{L^{2}(E_{j-1})} \leq || \mathcal{I}^{j} w - w \|_{L^{2} (E_{j-1})} \\
&+ \| w - \Pi^{0}_{1, E_{j-1}} w \|_{L^{2}(E_{j-1})} + \| \Pi^{0}_{1, E_{j-1}} w - \Pi^{0}_{1, E_{j-1}} \mathcal{I}^{j-1} w \|_{L^{2}(E_{j-1})} \\
& + \| \Pi^{0}_{1, E_{j-1}} \mathcal{I}^{j-1} w - I_{j-1}^{j} \mathcal{I}^{j-1} w \|_{L^2(E_{j-1})}.
\end{aligned}
\end{equation}

In order to estimate the first term on the right-hand side of \eqref{proposition2_1}, we use \cref{interpolant_estimates} to obtain
\begin{equation}\label{proposition2_1_1}
\begin{aligned}
& \| \mathcal{I}^{j} w - w \|_{L^{2} (E_{j-1})}^2 = \sum_{E_{j} \in \mathcal{T}_{E_{j-1}}} \| \mathcal{I}^{j} w - w \|_{L^{2}(E_{j})}^{2} \lesssim \sum_{E_{j} \in \mathcal{T}_{E_{j-1}}} \delta_{j}^{4} |w|_{H^{2}(E_{j})}^2 \\
& \lesssim \sum_{E_{j} \in \mathcal{T}_{E_{j-1}}} \delta_{j}^{4} \|w \|_{H^{2}(E_{j})}^{2} \lesssim \delta_{j}^{4} \| w \|_{H^{2}(E_{j-1})}^2.
\end{aligned}
\end{equation}
To estimate the second term on the right-hand side of \eqref{proposition2_1}, we use \cref{lemma_projector}
\begin{equation}\label{proposition2_1_2}
\begin{aligned}
& \| w - \Pi^{0}_{1, E_{j-1}} w \|_{L^{2}(E_{j-1})}^2 = \sum_{E_{j} \in \mathcal{T}_{E_{j-1}}} \| w - \Pi^{0}_{1, E_{j-1}} w \|_{L^{2}(E_{j})}^2 \\
& \lesssim \sum_{E_{j} \in \mathcal{T}_{E_{j-1}}} \delta_{j}^{4} \|w \|_{H^{2}(E_{j})}^{2} \lesssim \delta_{j}^{4} \| w \|_{H^{2}(E_{j-1})}^2.
\end{aligned}
\end{equation}

To estimate the third term on the right-hand side of \eqref{proposition2_1}, we use  \cref{lemma_for_hypothesis_8_2}
\begin{equation}\label{proprosition2_1_3_new}
\| \Pi^{0}_{1, E_{j-1}} w - \Pi^{0}_{1, E_{j-1}} \mathcal{I}^{j-1} w \|_{L^{2}(E_{j-1})}^2  \lesssim \delta_{j}^{4} \| w \|_{H^{2}(E_{j-1})}^2.
\end{equation}

It remains to estimate the fourth term on the right-hand side of \eqref{proposition2_1}. Firstly, we apply \cref{lemma_for_hypothesis_8}, then we add and subtract the term $\Pi^{0}_{1, E_{j-1}} w - w$ and we apply the triangle inequality, to obtain
\begin{equation}\label{proposition2_1_4}
\begin{aligned}
& \| \Pi^{0}_{1, E_{j-1}} \mathcal{I}^{j-1} w - I_{j-1}^{j}  \mathcal{I}^{j-1} w \|_{L^{2}(E_{j-1})} \lesssim \| \Pi^{0}_{1, E_{j-1}}  \mathcal{I}^{j-1} w  -  \mathcal{I}^{j-1} w  \|_{L^{2}(E_{j-1})} \\
& \lesssim \| \Pi^{0}_{1, E_{j-1}} \mathcal{I}^{j-1} w - \Pi^{0}_{1, E_{j-1}} w \|_{L^{2}(E_{j-1})} + \| \Pi^{0}_{1, E_{j-1}} w - w \|_{L^{2}(E_{j-1})}  + \| \mathcal{I}^{j-1} w - w \|_{L^{2}(E_{j-1})}.
\end{aligned}
\end{equation}
An estimate for the first term on the right hand side of \eqref{proposition2_1_4} is provided in \cref{lemma_for_hypothesis_8_2}, whereas for the second term we use \cref{lemma_projector} as done in \eqref{proposition2_1_2} and for the third term we use \cref{interpolant_estimates}. Therefore, we obtain
\begin{equation}\label{proposition2_1_4_bis}
\begin{aligned}
& \| \Pi^{0}_{1, E_{j-1}} \mathcal{I}^{j-1} w - I_{j-1}^{j}  \mathcal{I}^{j-1} w \|_{L^{2}(E_{j-1})} \lesssim \delta_{j}^{2} \|w\|_{H^{2}(E_{j-1})}.
\end{aligned}
\end{equation}
Combining \eqref{proposition2_1_1}, \eqref{proposition2_1_2}, \eqref{proprosition2_1_3_new} and \eqref{proposition2_1_4_bis}, we obtain \eqref{proposition2}. Finally, summing on all $E_{j-1} \in \mathcal{T}_{j-1}$, we obtain the thesis with constant $C_{H7} = C_{H7}   \Big (\frac{\delta_{j}}{\delta_{j-1}}, \frac{\delta_{j-1}}{\delta_{j}}, \# \mathcal{C}_{E_{j-1}} \Big ).$
\end{proof}

If the bilinear form $\mathcal{A}_{j-1}(\cdot, \cdot)$ is inherited, cf. \cref{inherited_non_inherited_bilinear_form}, then Assumption A\ref{bilinearform_assumption} is trivially satisfied with $C_{A3} = 1$. Consequently, it remains to prove the following proposition. 
\begin{proposition} Let $\mathcal{A}_{j-1}(\cdot, \cdot)$ be the non-inherited bilinear form defined as in \cref{inherited_non_inherited_bilinear_form}. Then, Assumption A\ref{bilinearform_assumption} holds true.
\end{proposition}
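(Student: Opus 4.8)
The plan is to establish Assumption A\ref{bilinearform_assumption} for the non-inherited form, i.e.
\[
\mathcal{A}_j(I_{j-1}^j u, I_{j-1}^j u) \lesssim \mathcal{A}_{j-1}(u,u) \qquad \forall u \in V_{j-1},
\]
with a constant independent of $j$. First I would reduce this to an $H^1$-stability estimate for the prolongation. By the norm equivalence \eqref{equivalenceH1} one has $\mathcal{A}_i(v,v) \eqsim |v|_{H^1(\Omega)}^2$ uniformly in $i$, for all $v \in V_i$, and this applies to the non-inherited $\mathcal{A}_{j-1}$ since it is \eqref{form_A} posed on level $j-1$. Hence it suffices to prove
\[
|I_{j-1}^j u|_{H^1(\Omega)}^2 \lesssim |u|_{H^1(\Omega)}^2 \qquad \forall u \in V_{j-1},
\]
with $C_{A3}$ then absorbing the (uniform) equivalence constants.

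I would argue locally on each coarse element $E_{j-1} \in \mathcal{T}_{j-1}$ and sum at the end. The crucial structural observation is that $I_{j-1}^j$ reproduces constants: for a constant $c$ one has $\text{dof}_i(c) = c$ and $\Pi^0_{1,E_{j-1}} c = c$, so by \eqref{prolongation_operator} $I_{j-1}^j c = c$. Since the $H^1$-seminorm annihilates constants and $I_{j-1}^j$ is linear, $|I_{j-1}^j u|_{H^1(E_{j-1})} = |I_{j-1}^j(u-c)|_{H^1(E_{j-1})}$ for every constant $c$. Restricting to each fine element $E_j \subset E_{j-1}$, the function $I_{j-1}^j(u-c)|_{E_j}$ lies in $V^{E_j}$, so the inverse inequality \cref{theorem_inverse_inequality} gives $|I_{j-1}^j(u-c)|_{H^1(E_j)} \lesssim \delta_j^{-1}\|I_{j-1}^j(u-c)\|_{L^2(E_j)}$; summing over $E_j \in \mathcal{T}_{E_{j-1}}$ yields $|I_{j-1}^j(u-c)|_{H^1(E_{j-1})} \lesssim \delta_j^{-1}\|I_{j-1}^j(u-c)\|_{L^2(E_{j-1})}$.

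Next I would invoke the local $L^2$-stability established inside the proof of \cref{stability_prolongation_operator} (applicable because $u-c \in V^{E_{j-1}}$, constants belonging to the local space), obtaining $\|I_{j-1}^j(u-c)\|_{L^2(E_{j-1})} \lesssim \|u-c\|_{L^2(E_{j-1})}$. Choosing $c$ to be the mean value of $u$ on $E_{j-1}$ and applying the Poincaré–Wirtinger inequality on $E_{j-1}$ (valid with constant depending only on $\rho_0$ by Assumption A\ref{mesh_assumption}, since $E_{j-1}$ is a union of a uniformly bounded number of star-shaped domains) gives $\|u-c\|_{L^2(E_{j-1})} \lesssim \delta_{j-1}\,|u|_{H^1(E_{j-1})}$. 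Chaining these bounds produces $|I_{j-1}^j u|_{H^1(E_{j-1})} \lesssim (\delta_{j-1}/\delta_j)\,|u|_{H^1(E_{j-1})}$, and the bounded-variation hypothesis in Assumption A\ref{quasi_uniform_mesh} makes the ratio uniformly bounded. Squaring, summing over all $E_{j-1} \in \mathcal{T}_{j-1}$ (the seminorm being additive over the partition since $I_{j-1}^j u \in V_j \subset H^1_0(\Omega)$), and re-applying \eqref{equivalenceH1} closes the argument.

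The hard part is controlling the scale mismatch introduced by the inverse inequality: the factor $\delta_j^{-1}$ must be absorbed, and the only way to do so is to pair it with a first-order approximation estimate contributing $\delta_{j-1}$. This is precisely why constant-reproduction of $I_{j-1}^j$ (which licenses subtracting the mean and using Poincaré) together with the uniform control $\delta_{j-1}/\delta_j \lesssim 1$ from Assumption A\ref{quasi_uniform_mesh} are indispensable; without them the estimate would degenerate across levels. A minor point to verify is that the local $L^2$-stability constant from \cref{stability_prolongation_operator} is itself uniform, which again follows from Assumption A\ref{quasi_uniform_mesh} and the uniform bound on $\#\mathcal{C}_{E_{j-1}}$.
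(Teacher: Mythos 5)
Your proposal is correct and follows essentially the same route as the paper's proof: reduce to the $H^1$-seminorm via the continuity/coercivity of the discrete forms, exploit the constant-reproduction property of $I_{j-1}^j$ to subtract the local mean, then chain the element-wise inverse inequality, the local $L^2$-stability of the prolongation, and the Poincar\'e--Wirtinger inequality, absorbing the ratio $\delta_{j-1}/\delta_j$ via Assumption A2. No substantive differences to report.
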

\begin{proof}
Let $\bar{u}_{E_{j-1}}$ be the mean value of $u$ on $E_{j-1}$.  By the continuity of $\mathcal{A}_{j}(\cdot, \cdot)$ and noticing that $I^{j}_{{j-1}} \bar{u}_{E_{{j-1}_{|_{E_{j-1}}}}} = \bar{u}_{E_{j-1}}$, we obtain
\begin{equation*}
\begin{aligned}
& \mathcal{A}_{j}(I_{j-1}^{j}u, I_{j-1}^{j} u) = \sum_{E_{j} \in \mathcal{T}_{j}} \mathcal{A}_{j}^{E_{j}}(I_{j-1}^{j}u, I_{j-1}^{j} u) \lesssim \sum_{E_j \in \mathcal{T}_{j}} \|\nabla I_{j-1}^{j} u \|_{L^{2}(E_{j})}^2 \\
&  \lesssim \sum_{E_{j} \in \mathcal{T}_{j}} \| \nabla (I^{j}_{j-1} u - \bar{u}_{E_{j-1}}) \|^{2}_{L^{2}(E_j)}  \lesssim \sum_{E_{j} \in \mathcal{T}_{j}} \|\nabla I_{j-1}^{j}(u-\bar{u}_{E_{j-1}})\|^2_{L^{2}(E_j)}.
\end{aligned}
\end{equation*}
Next, we apply the inverse inequality \cref{inverse_inequality_equation} for VEM function on $V^{E_{j}}$ and the stability of $I^{j}_{j-1}$ in $L^{2}(E_{j-1})$ proved in \cref{stability_prolongation_operator}
\begin{equation*}
\begin{aligned}
&\sum_{E_{j} \in \mathcal{T}_{j}} \|\nabla I_{j-1}^{j}(u-\bar{u}_{E_{j-1}})\|^2_{L^{2}(E_j)} \lesssim \sum_{E_{j} \in \mathcal{T}_{j}} \frac{1}{\delta_{j}^2} \| I_{j-1}^{j} (u - \bar{u}_{E_{j-1}}) \|^{2}_{L^{2}(E_{j})} \\
& = \sum_{E_{j-1} \in \mathcal{T}_{j-1}} \sum_{E_{j} \in \mathcal{T}_{E_{j-1}}} \frac{1}{\delta_{j}^{2}} \|I_{j-1}^{j}(u-\bar{u}_{E_{j-1}})\|^{2}_{L^{2}(E_{j})} \\
& = \frac{1}{\delta_{j}^{2}} \sum_{E_{j-1} \in \mathcal{T}_{j-1}} \|I_{j-1}^{j}(u - \bar{u}_{E_{j-}})\|^{2}_{L^{2}(E_{j-1})} \lesssim \frac{1}{\delta_{j}^{2}} \sum_{E_{j-1} \in \mathcal{T}_{j-1}} \| u - \bar{u}_{E_{j-1}}\|^{2}_{L^{2}(E_{j-1})}.
\end{aligned}
\end{equation*}
Finally, we apply the Poincar\'e inequality to the virtual element function $u - \bar{u}_{E_{j-1}}$ that has zero mean on $E_{j-1}$ by definition of $\bar{u}_{E_{j-1}}$ and we conclude by the coercivity of $\mathcal{A}_{j-1}(\cdot,\cdot)$
\begin{equation*}
\begin{aligned}
& \frac{1}{\delta_{j}^{2}} \sum_{E_{j-1} \in \mathcal{T}_{j-1}} \| u - \bar{u}_{E_{j-1}}\|^{2}_{L^{2}(E_{j-1})} \lesssim \Big (\frac{\delta_{j-1}}{\delta_{j}} \Big)^2 \sum_{E_{j-1}\in \mathcal{T}_{j-1}} \| \nabla (u - c_{E_{j-1}}) \|^{2}_{L^{2}(E_{j-1})} \\
& = \Big( \frac{\delta_{j-1}}{\delta_{j}}\Big)^2 \sum_{E_{j-1}\in \mathcal{T}_{j-1}} \| \nabla u \|^{2}_{L^{2}(E_{j-1})} 
\lesssim \Big( \frac{\delta_{j-1}}{\delta_{j}}\Big)^2 \mathcal{A}_{j-1}(u,u).
\end{aligned}
\end{equation*}
Therefore, Assumption A\ref{bilinearform_assumption} holds true with constant $C_{A3} = C_{A3} \Big (\frac{\delta_{j-1}}{\delta_{j}} \Big )$.
\end{proof}

We prove Assumption A\ref{smoothing_hypothesis} relying on the abstract results reported in \cite{bramble1992analysis} for smoothing operators defined in terms of subspace decomposition such as Parallel Subspace Correction (PSC) and Successive Subspace Correction (SSC). Indeed, the Gauss-Seidel method can be interpreted as a SSC method. Let us consider the following decomposition of the global virtual element space $V_{j}$ defined in \cref{global_VEM}
\begin{equation}\label{V_decomposition}
V_{j} = \sum_{i = 1}^{\mathcal{N}^{j}_{dof}} V_{j}^{i},
\end{equation}
where $V_{j}^{i} := \text{span} \{ \varphi^{j}_{i} \}$. Moreover, let $A_{j,i} : V_{j}^{i} \to V_{j}^{i}$ be defined by $(A_{j,i} v, u)_{j} = (A_{j} v, u)_{j} \ \forall v \in V_{j}^{i}$,
and $Q_{j}^{i}: V_{j} \to V^{i}_{j}$ be the projection onto $V_{j}^{i}$ with respect to the inner product $(\cdot,\cdot)_{j}$. Let $w \in V_{j}$. Given the subspace decomposition \eqref{V_decomposition} of $V_{j}$, the SSC operator $R_{j}: V_{j} \to V_{j}$ is defined in \cref{SSC}. 

\begin{algorithm}
\footnotesize
\caption{Successive subspace correction method (SSC) \ \ \ $R_{j} w = \text{SSC}(j,w)$}\label{SSC}
\begin{enumerate}
\item Set $v_0 = 0$.
\item Define $v_{i}$ for $i = 1, \dots, \mathcal{N}^{j}_{dof}$ by
\begin{equation*}
v_{i} = v_{i-1} + A_{j,i}^{-1} Q_{j}^{i}(w - A_{j}v_{i-1}).
\end{equation*}
\item Set $R_{j} w = v_{l}$.
\end{enumerate}
\end{algorithm}

In \cite{bramble1992analysis}, it is shown that Assumption A\ref{smoothing_hypothesis} holds for $R_{j}$ defined as in \cref{SSC}.
\begin{theorem}\cite[Theorem 3.2]{bramble1992analysis}\label{Smoothing_theorem}
Let $R_{j}$ be defined as in \cref{SSC} and let the projection $P^{i}_{j}: V_{j} \to V_{j}^{i}$ be defined by
\begin{equation*}
(A_{j} P^{i}_{j} v, u)_{j} = (A_{j} v, u)_{j} \ \ \  \forall u \in V_{j}^{i}.
\end{equation*}
Moreover, define $\kappa_{im} = 0 $ if $ P_{j}^{i} P_{j}^{m} = 0$ and equal to $1$ otherwise, 
and set $n_0 = \text{max}_{i} \sum_{m=1}^{\mathcal{N}^{j}_{dof}} \kappa_{im}.$ Assume that the following two conditions hold:
\begin{enumerate}
\item The subspaces satisfy a limited interaction property, i.e., $n_0 \leq c_1,$ with $c_1$ independent of $j$.
\item There exists a positive constant $c_0$ not depending on $j$ such that for each $u \in V_{j}$ there is a decomposition $u = \sum_{i = 1}^{\mathcal{N}^{j}_{dof}} u_{i}$ with $u_{i} \in V_{j}^{i}$ satisfying
\begin{equation*}
\sum_{i=1}^{\mathcal{N}^{j}_{dof}} || u_{i}||_{j}^{2} \leq c_{0} ||u||_{j}^{2}.
\end{equation*}
\end{enumerate}
Then \eqref{equation_smoothing_hypothesis} holds with
\begin{equation}\label{constant_A4}
C_{A5} = 2 c_0 (1 + c_1^2).
\end{equation}
\end{theorem}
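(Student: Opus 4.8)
The statement is the abstract smoothing property for successive subspace correction, so the plan is to reduce Assumption~A\ref{smoothing_hypothesis} to an energy-norm reduction estimate for the SSC iteration and then close it using the two hypotheses of the theorem. First I would pass from $\tilde{R}_{j}$ to energy quantities. Setting $v := A_{j}^{-1}u$ and using $\tilde{R}_{j} = (I - K_{j}^{*}K_{j})A_{j}^{-1}$ together with $(A_{j}w,z)_{j} = \mathcal{A}_{j}(w,z)$ and the fact that $K_{j}^{*}$ is the $\mathcal{A}_{j}$-adjoint of $K_{j}$, a direct computation gives
\[
(\tilde{R}_{j}u,u)_{j} = \mathcal{A}_{j}(v,v) - \mathcal{A}_{j}(K_{j}v,K_{j}v).
\]
Since $\|u\|_{j} = \|A_{j}v\|_{j}$, proving \eqref{equation_smoothing_hypothesis} is then equivalent to establishing
\[
\frac{\|A_{j}v\|_{j}^{2}}{\lambda_{j}} \le C_{A5}\,\big(\mathcal{A}_{j}(v,v) - \mathcal{A}_{j}(K_{j}v,K_{j}v)\big) \qquad \forall v \in V_{j}.
\]

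Next I would make the product structure of the smoother explicit. From the definition of the SSC iterate in \cref{SSC} one checks that $A_{j,i}^{-1}Q_{j}^{i}A_{j} = P_{j}^{i}$, whence the error operator $K_{j} = I - R_{j}A_{j}$ equals the ordered product of the $\mathcal{A}_{j}$-orthogonal projections $P_{j}^{i}$. Introducing the partial products $E^{0} := I$ and $E^{i} := (I - P_{j}^{i})E^{i-1}$, so that $K_{j} = E^{\mathcal{N}^{j}_{dof}}$, and using that each $P_{j}^{i}$ is $\mathcal{A}_{j}$-self-adjoint and idempotent, the Pythagorean identity $\mathcal{A}_{j}(E^{i-1}v,E^{i-1}v) - \mathcal{A}_{j}(E^{i}v,E^{i}v) = \vertiii{P_{j}^{i}E^{i-1}v}_{1,j}^{2}$ telescopes to
\[
\mathcal{A}_{j}(v,v) - \mathcal{A}_{j}(K_{j}v,K_{j}v) = \sum_{i=1}^{\mathcal{N}^{j}_{dof}} \vertiii{P_{j}^{i}E^{i-1}v}_{1,j}^{2}.
\]

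To bound the left-hand side of the reduced inequality from above I would invoke condition~(2): decompose $A_{j}v = \sum_{i}\zeta_{i}$ with $\zeta_{i}\in V_{j}^{i}$ and $\sum_{i}\|\zeta_{i}\|_{j}^{2}\le c_{0}\|A_{j}v\|_{j}^{2}$. Writing $\|A_{j}v\|_{j}^{2} = \sum_{i}(A_{j}v,\zeta_{i})_{j} = \sum_{i}\mathcal{A}_{j}(P_{j}^{i}v,\zeta_{i})$, then applying Cauchy--Schwarz together with $\vertiii{\zeta_{i}}_{1,j}^{2} = (A_{j}\zeta_{i},\zeta_{i})_{j}\le \lambda_{j}\|\zeta_{i}\|_{j}^{2}$, and dividing through by $\|A_{j}v\|_{j}$, yields
\[
\frac{\|A_{j}v\|_{j}^{2}}{\lambda_{j}} \le c_{0}\sum_{i=1}^{\mathcal{N}^{j}_{dof}} \vertiii{P_{j}^{i}v}_{1,j}^{2}.
\]

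The main obstacle --- and the only step where condition~(1) is used --- is to bridge the two sums, i.e. to control $\sum_{i}\vertiii{P_{j}^{i}v}_{1,j}^{2}$ by $\sum_{i}\vertiii{P_{j}^{i}E^{i-1}v}_{1,j}^{2}$. Here I would split $P_{j}^{i}v = P_{j}^{i}E^{i-1}v + P_{j}^{i}(v - E^{i-1}v)$ and use the telescoping $v - E^{i-1}v = \sum_{k<i}P_{j}^{k}E^{k-1}v$, so that $P_{j}^{i}(v-E^{i-1}v) = \sum_{k<i}P_{j}^{i}P_{j}^{k}E^{k-1}v$ with only the indices $k$ satisfying $\kappa_{ik}=1$ surviving. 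A strengthened Cauchy--Schwarz over the at most $n_{0}\le c_{1}$ nonzero terms in each row, the contraction property $\vertiii{P_{j}^{i}w}_{1,j}\le\vertiii{w}_{1,j}$, summation over $i$, and the symmetry of $\kappa_{ik}$ (which bounds each column sum by $c_{1}$ as well) then give $\sum_{i}\vertiii{P_{j}^{i}v}_{1,j}^{2}\le 2(1+c_{1}^{2})\sum_{i}\vertiii{P_{j}^{i}E^{i-1}v}_{1,j}^{2}$. Chaining the three displayed estimates delivers the claim with $C_{A5} = 2c_{0}(1+c_{1}^{2})$; I expect this coloring/strengthened Cauchy--Schwarz argument to be the genuinely delicate point, the remainder being routine manipulation of $\mathcal{A}_{j}$-orthogonal projections.
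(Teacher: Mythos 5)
Your argument is correct. Note that the paper does not prove this statement at all: it is quoted verbatim from the cited reference (Theorem 3.2 of Bramble--Pasciak--Wang--Xu) and used as a black box, with the paper's only original contribution being the verification of conditions (1) and (2) for the Lagrangian-basis decomposition. Your reconstruction is exactly the standard proof from that reference: the identity $(\tilde{R}_{j}u,u)_{j}=\mathcal{A}_{j}(v,v)-\mathcal{A}_{j}(K_{j}v,K_{j}v)$ with $v=A_{j}^{-1}u$, the factorization $K_{j}=(I-P_{j}^{N})\cdots(I-P_{j}^{1})$ with the telescoping Pythagorean identity, the duality/Cauchy--Schwarz step using condition (2) together with $\vertiii{\zeta_i}_{1,j}^{2}\le\lambda_{j}\|\zeta_i\|_{j}^{2}$, and the limited-interaction estimate bridging $\sum_i\vertiii{P_j^i v}_{1,j}^2$ and $\sum_i\vertiii{P_j^i E^{i-1}v}_{1,j}^2$ (where the symmetry of $\kappa_{im}$, which follows from $P_j^iP_j^m=0$ being equivalent to $V_j^i\perp_{\mathcal{A}_j}V_j^m$, is indeed needed to bound the column sums). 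All steps are sound and the constant $C_{A5}=2c_0(1+c_1^2)$ comes out exactly.
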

In our particular case, it turns out that $\kappa_{im}$ is different from zero only if $\Omega_{j}^{i} \cap \Omega_{j}^{m} \neq \emptyset$, where we denote by $\Omega_{j}^{i}$ the support of the Lagrangian basis function $\varphi^{j}_{i}, i = 1, \dots, \mathcal{N}^{j}_{dof}$. Consequently,  we can take $c_1$ as the maximum number of supports $\{\Omega_{j}^{m}\}$ of the basis functions $\{\varphi^{j}_{m}\}$ that intersect $\Omega_{j}^{i}$. Due to the mesh regularity requirements of Assumption A\ref{mesh_assumption}, $c_1$ is a bounded quantity. Moreover, we can set $c_0 = 1$. Therefore, the two conditions stated in \cref{Smoothing_theorem} are satisfied and we conclude that Assumption A\ref{smoothing_hypothesis} holds with $C_{A5}$ defined as in \eqref{constant_A4} in case we choose $R_{j}$ to be the linear smoothing operator induced by to the Gauss-Seidel smoother.

\subsection{Convergence analysis of the V-cycle and W-cycle methods} \label{uniformly_bounded_constant}
In this section we briefly deal with the convergence of V-cycle and W-cycle, i.e., when $J>2$, by generalizing the proof of the convergence of the two-level method. To this aim, let us first remark that a closer inspection to the proofs of 
Hypotheses H\ref{hypothesis3H}, H\ref{hypothesis5H}, H\ref{hypothesis6H} and H\ref{hypothesis7H} reveals that the constants appearing in \eqref{hypothesis3}, \eqref{equivalence_scalar_product}, \eqref{hypothesis4} and  \eqref{hypothesis7} depend on the considered level $j$. Moreover, the constants $C_{A3}$, $C_{H4}$ and $C_{H7}$ appearing in Assumption A\ref{bilinearform_assumption} and Hypotheses H\ref{hypothesis4H} and H\ref{hypothesis8H} depend on $\frac{\delta_{j}}{\delta_{j-1}} $, $\frac{\delta_{j-1}}{\delta_{j}}$ and $\# \mathcal{C}_{E_{j-1}}$, respectively. Therefore, we denote by $C^{j}$, $C_{A3}^{j}$, $C_{H4}^{j}$ and $C_{H7}^{j}$ such constants.
Since as explained in Assumption A\ref{quasi_uniform_mesh}, we assume a bounded variation hypothesis between subsequent levels, then both $\frac{\delta_{j}}{\delta_{j-1}}$ and  $\frac{\delta_{j-1}}{\delta_{j}}$ are bounded. Moreover, if the fine tessellation $\mathcal{T}_{J}$ consisting of triangles is a shape-regular tessellation, then $\# \mathcal{C}_{E_{j-1}}$ is uniformly bounded by $\# \mathcal{C}_{E_{J}}$. Indeed, due to the agglomeration procedure, the cardinality of the set of elements $E_{j} \in \mathcal{T}_{E_{j-1}}$ having a certain node as vertex cannot increase. Hence, all the involved constants are uniformly bounded independently of the level $j$. Consequently, Assumption A\ref{bilinearform_assumption} is satisfied setting $C_{A3} = \max_{j}\{C_{A3}^j\}$ and Assumption A\ref{regularity_approximation_assumption} is satisfied setting $C = \max_{j}\{C^j\}$, $C_{H4} = \max_{j} \{ C_{H4}^{j}\}$ and $C_{H7} = \max_{j} \{ C_{H7}^{j}\}$. Furthermore, Assumption A\ref{smoothing_hypothesis} is satisfied with $C_{A5}$ defined as in \eqref{constant_A4} independently of the level $j$.
To conclude it is sufficient to invoke \cref{V_cycle_convergence,W_cycle_convergence}.

\section{Numerical results} \label{Numerical_experiments}
In this section we describe the implementation of the multigrid method introduced in \Cref{Multigrid_algorithms} and then we present some numerical results to assess the convergence properties of our h-multigrid virtual element algorithm.

\subsection{Implementation details}\label{Implementation}
The algebraic linear system of equations stemming from the virtual element discretization \eqref{PoissonVEM} of the Poisson equation on the finest grid $\mathcal{T}_{J}$ is in the form
\begin{equation}\label{algebraic_problem}
\mathbf{A}_{J} \mathbf{u}_{J} = \mathbf{f}_{J}, 
\end{equation} 
where $\mathbf{u}_{J}\in \mathbb{R}^{\mathcal{N}^{J}_{dof}}$ represents the vector of the degrees of freedom of $u_{J} \in V_{J}$ with respect to the VEM basis, $\mathbf{A} \in \mathbb{R}^{\mathcal{N}^{J}_{dof},\mathcal{N}^{J}_{dof}}$ represents the matrix associated to the operator $A_{J}$ defined in \eqref{operator_A} and $\mathbf{f}_{J} \in \mathbb{R}^{\mathcal{N}^{J}_{dof}}$ is the vector associated to $f_{J} \in V_{j}$ defined as $(f_{J}, v)_{L^{2}(\Omega)} = \sum_{E_J \in \mathcal{T}_J} (f, \Pi^{0}_{0, E_{J}} v)_{L^{2}(E_{J})} \  \forall v \in V_{J}$.

The algebraic counterpart $\mathbf{I}^{j}_{j-1}\in \mathbb{R}^{\mathcal{N}^{j}_{dof},\mathcal{N}^{j-1}_{dof}}$ of the prolongation operator $I_{j-1}^{j}: V_{j-1} \to V_{j}$ is locally defined  $\forall t \in \mathcal{N}(E_{j-1})$ as
\begin{equation*}
{(\mathbf{I}^{j}_{j-1})}_{it} := 
\begin{cases}
\Big ( \sum_{g = 1}^{N_1}  \text{dof}_{\mathbb{P}_1(E_{j-1})}  (\Pi^{0}_{1,E_{j-1}} \varphi_{t}^{E_{j-1}})_{g}     m_{g} (\mathbf{x}_i) \Big ) \ \ \ &i \in \mathcal{N}(\mathcal{T}_{E_{j-1}} \backslash E_{j-1}), \\
\ \ \ 1 \ \ \  & i = t , \ \ \ i \in \mathcal{N}(E_{j-1}),\\
\ \ \ 0 \ \ \  & i \neq t, \ \ \ i \in \mathcal{N}(E_{j-1}),
\end{cases}
\end{equation*}
where  $\text{dof}_{\mathbb{P}_1(E_{j-1})}(\cdot)$ is the operator returning the degrees of freedom with respect to the basis of $\mathbb{P}_1(E_{j-1})$ consisting of the set of scaled monomials $\mathcal{M}_{1}(E_{j-1})$ introduced in \eqref{scaled_monomials}.
The algebraic counterpart of the restriction operator $I_{j-1}^{j}: V_{j} \to V_{j-1}$ is denoted by $\mathbf{I}^{j-1}_{j}\in \mathbb{R}^{\mathcal{N}^{j-1}_{dof},\mathcal{N}^{j}_{dof}}$ and the algebraic counterpart  of the operator $A_{j-1}, \ j = 2, \dots, J$ is denoted by $\mathbf{A}_{j-1} \in \mathbb{R}^{\mathcal{N}^{j-1}_{dof},\mathcal{N}^{j-1}_{dof}}$.

As a smoothing iteration, we have selected the Gauss-Seidel method. The algebraic counterparts of the operators $R_{j} : V_{j} \to V_{j}$  and $R_{j}^{T}$ are the matrix $\mathbf{R}_{j} \in \mathbb{R}^{\mathcal{N}^{j}_{dof},\mathcal{N}^{j}_{dof}}$ and  $\mathbf{R}_{j}^{T} \in \mathbb{R}^{\mathcal{N}^{j}_{dof},\mathcal{N}^{j}_{dof}}$, respectively. We set $\mathbf{R}_{j}^{(l)}$ equals to $\mathbf{R}_{j}$ if $l$ is odd and equals to $\mathbf{R}_{j}^{T} $ if if $l$ is even.

Now, we are ready to introduce the algebraic counterpart of the multigrid method introduced in \Cref{Multigrid_algorithms}. In \cref{Multigrid_algorithm}, we outline the multigrid iteration algorithm for the computation of $\mathbf{u}_{J}$. $\text{MG}_{p}(J, \mathbf{f}_{J}, \mathbf{u}_{k}, \nu)$ represents either one iteration of the non-nested W-cycle ($p = 2$) or one iteration of non-nested V-cycle ($p =  1$).  

\begin{algorithm}
\footnotesize
\caption{Multigrid iteration for the solution of problem \eqref{algebraic_problem}}\label{Multigrid_algorithm}
\begin{algorithmic}
\State{Initialize $\mathbf{u}^{0}$;}
\For{$k = 0, 1, \dots$}\\
\ \ \ \ \ $\mathbf{u}^{k+1} = \text{MG}_{p}(J, \mathbf{f}_{J}, \mathbf{u}^{k}, \nu);$ 
\EndFor
\end{algorithmic}
\end{algorithm}

In particular, \cref{W_cycle} represents the solution obtained after one iteration of either the W-cycle ($p = 2$) or the V-cycle ($p = V$) method with initial guess $\mathbf{x}^{0}$ and $\nu$ Gauss-Seidel iterations of pre-smoothing and post-smoothing. The two-level method is a particular case of \cref{W_cycle} corresponding to $J=2$.

\begin{algorithm}
\footnotesize
\caption{$p$-cycle Multigrid ($p = 1$ or $p = 2$) \ \ \ $\mathbf{y} = \text{MG}_{p}(j,\mathbf{g}, \mathbf{x}^{0}, \nu) $}\label{W_cycle}
\begin{algorithmic}
\State{Set $\mathbf{q}^{0} = 0.$}
\If{ $j = 1$} \\
\ \ \ \ \ $\text{MG}_{p}(1, \mathbf{g}, \mathbf{x}^{0}, \nu) = \mathbf{A}_{1}^{-1} \mathbf{g}.$
\Else
\ \ \ \ \ \State{\underline{\textit{Pre-smoothing:}}}
\For{$ l = 1, \dots, \nu$} \\
\ \ \ \ \ \ \ \ \ \ $\mathbf{x}^{l} = \mathbf{x}^{l-1} + \mathbf{R}_{j}^{(l+\nu)}(\mathbf{g} - \mathbf{A}_{j} \mathbf{x}^{l-1});$
 \EndFor  
 \ \ \ \ \ \State{\underline{\textit{Coarse grid correction:}}} \\
\ \ \ \ \ $\mathbf{r}_{j-1} = \mathbf{I}^{j-1}_{j}(\mathbf{g} - \mathbf{A}_{j} \mathbf{x}^{\nu});$ 
\For{$i = 1, \dots, p$} \\
\ \ \ \ \ \ \ \ \ \ $\mathbf{q}^{i} = \text{MG}_{p}(j-1, \mathbf{r}_{j-1}, \mathbf{{q}}^{i-1}, \nu);$ 
\EndFor

 $\mathbf{y}^{\nu} = \mathbf{x}^{\nu} + \mathbf{I}^{j}_{j-1} \mathbf{q}^{p};$ 
\ \ \ \ \ \State{\underline{\textit{Post-smoothing:}}}
\For{$ l = \nu + 1, \dots, 2 \nu $} \\
\ \ \ \ \ \ \ \ \ \ $\mathbf{y}^{l} = \mathbf{y}^{l-1} + \mathbf{R}_{j}^{(l+\nu)}(\mathbf{g} - \mathbf{A}_{j} \mathbf{y}^{l-1})$;
\EndFor 
\ \ \ \ \ \ \State{ $\text{MG}_{p}(j, \mathbf{g}, \mathbf{x}^{0}, \nu) = \mathbf{y}^{2 \nu}. $ }
\EndIf
\end{algorithmic}
\end{algorithm}

\subsection{Tests}
In this section we present some numerical results to assess the convergence properties of our h-multigrid virtual element algorithm for the solution of the Poisson equation on the unit square $\Omega = (0,1) \times (0,1)$ with $\mu =1 $, $f(x,y) = -2(x(x-1)+y(y-1))$ and homogeneous Dirichlet boundary conditions. We consider both the case in which the bilinear form is inherited and non-inherited, cf. \cref{inherited_non_inherited_bilinear_form}.

We consider the set of agglomerated meshes shown in Figure \ref{Meshes}. The coarsening strategy has been realized through a code developed by the authors. The first row of Figure \ref{Meshes} shows the sequence  of  initial fine grids corresponding to decreasing mesh sizes $\delta_{J}$.  They consist of shape-regular triangle tessellations with $511$ (Figure \ref{a}), $1034$ (Figure \ref{b}), $1939$ (Figure \ref{c}) and $ 3915$ (Figure \ref{d}) elements, respectively. The triangle mesh have been generated using the Triangle library \cite{shewchuk1996triangle}. The remaining rows of Figure \ref{Meshes} show the sequence of agglomerated nested coarsened meshes.

Our aim is to analyse the performance of the two-level, the W-cycle and the  V-cycle h-multigrid schemes based on the virtual element method of order $k=1$. We set a relative tolerance of $10^{-8}$ as a stopping criterion. 


\setcounter{figure}{3}
\begin{figure}
\centering\begin{tabular}{@{ }c@{ }c@{ }c@{ }c@{ }c@{ }}
&\textbf{Set 1} & \textbf{Set 2} & \textbf{Set 3} & \textbf{Set 4} \\
\rowname{}&
\includegraphics[width=0.23\textwidth]{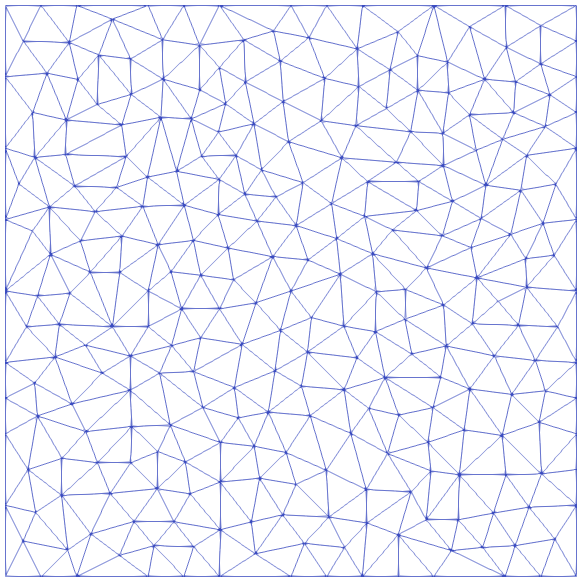}&
\includegraphics[width=0.23\textwidth]{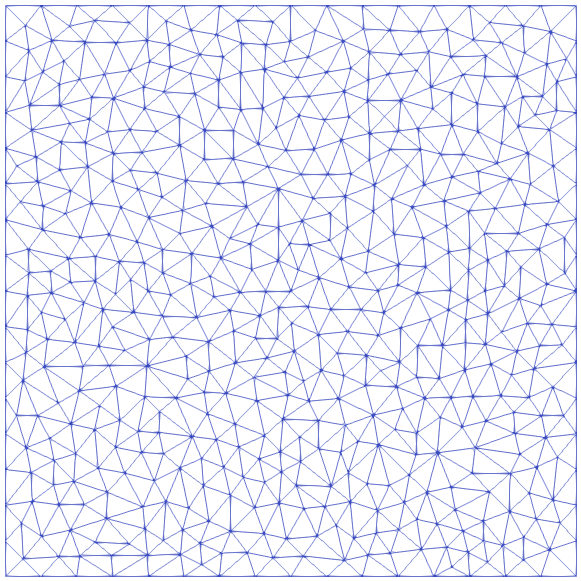}&
\includegraphics[width=0.23\textwidth]{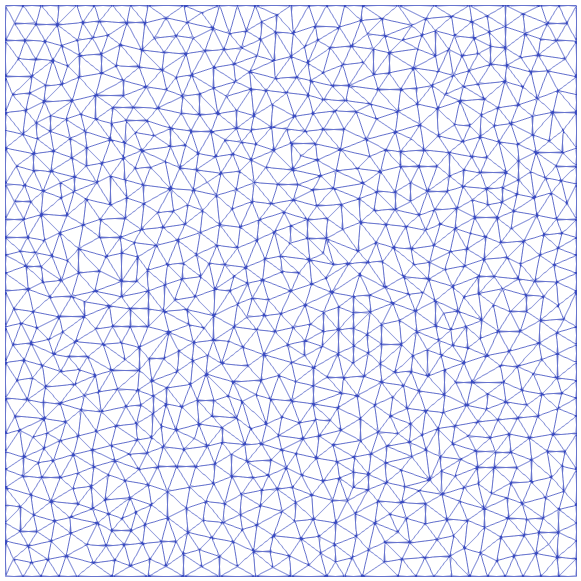}&
\includegraphics[width=0.23\textwidth]{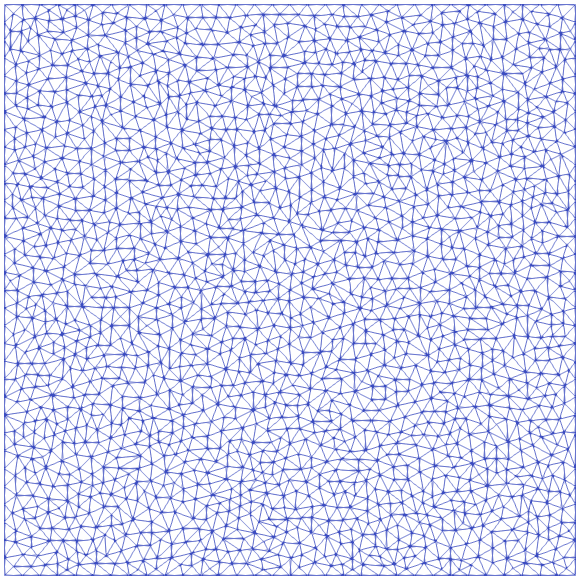}\\
&\mycaption{$\mathcal{T}_{J}, 511$}\label{a} & \mycaption{$\mathcal{T}_{J}, 1034$}\label{b} & \mycaption{$\mathcal{T}_{J}, 1939$}\label{c} & \mycaption{$\mathcal{T}_{J}, 3915$}\label{d}\\
\rowname{}&
\includegraphics[width=0.23\textwidth]{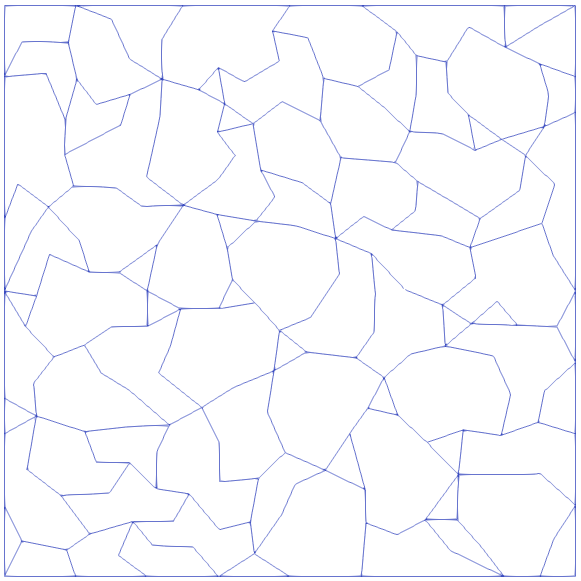}&
\includegraphics[width=0.23\textwidth]{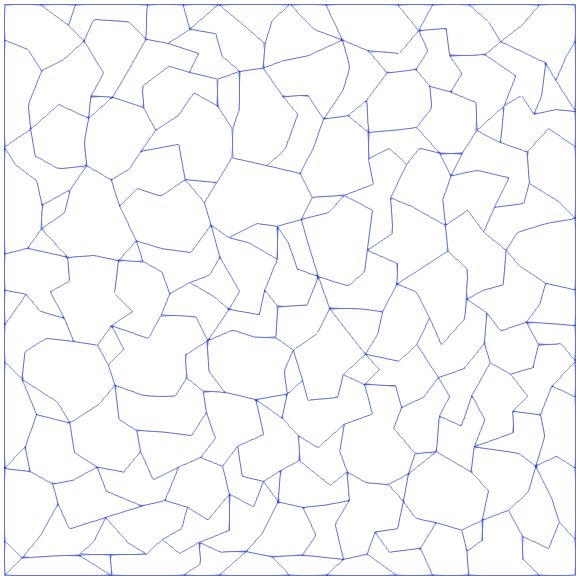}&
\includegraphics[width=0.23\textwidth]{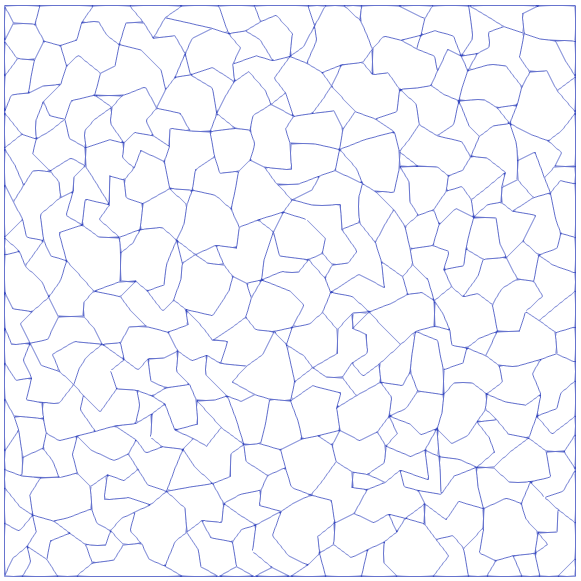}&
\includegraphics[width=0.23\textwidth]{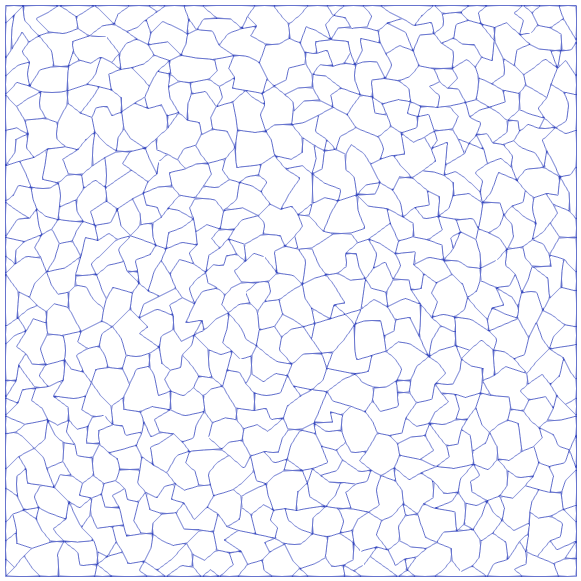}\\
&\mycaption{$\mathcal{T}_{J-1}$} & \mycaption{$\mathcal{T}_{J-1}$} & \mycaption{$\mathcal{T}_{J-1}$} & \mycaption{$\mathcal{T}_{J-1}$}\\
\rowname{}&
\includegraphics[width=0.23\textwidth]{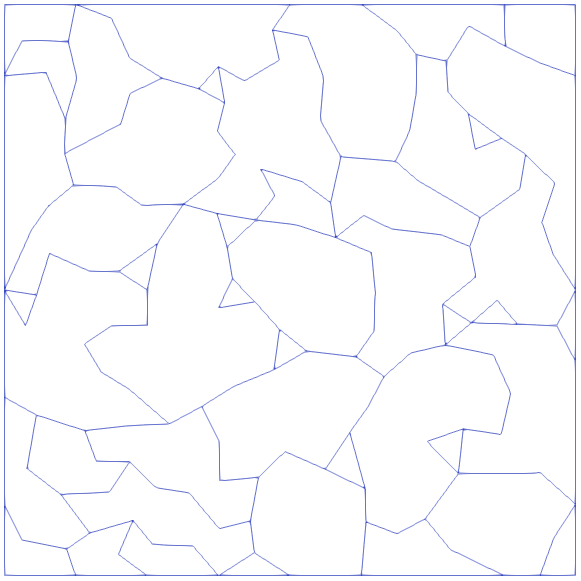}&
\includegraphics[width=0.23\textwidth]{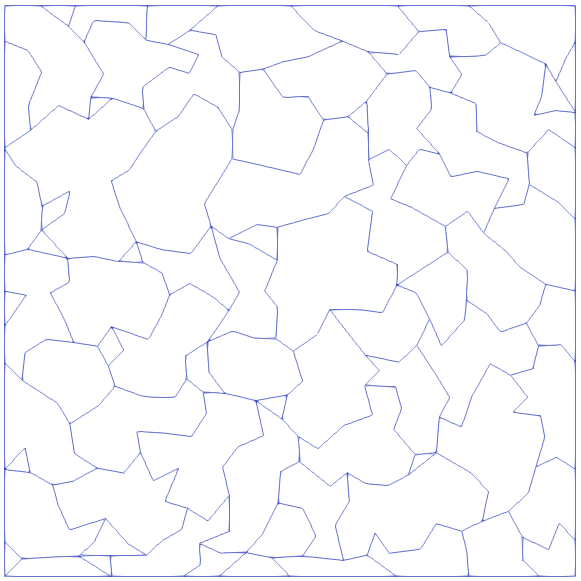}&
\includegraphics[width=0.23\textwidth]{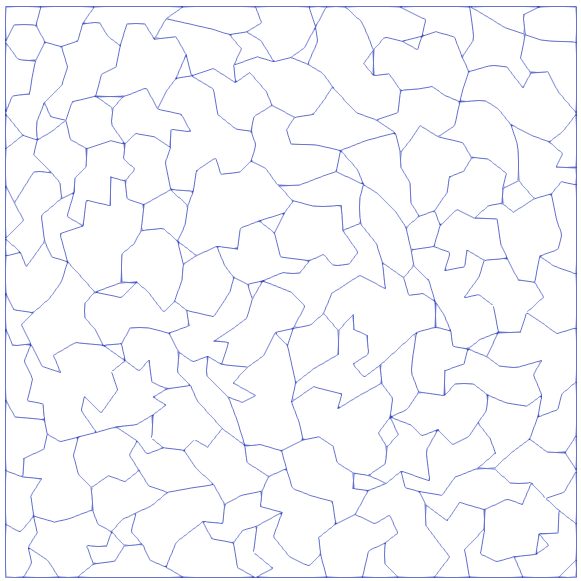}&
\includegraphics[width=0.23\textwidth]{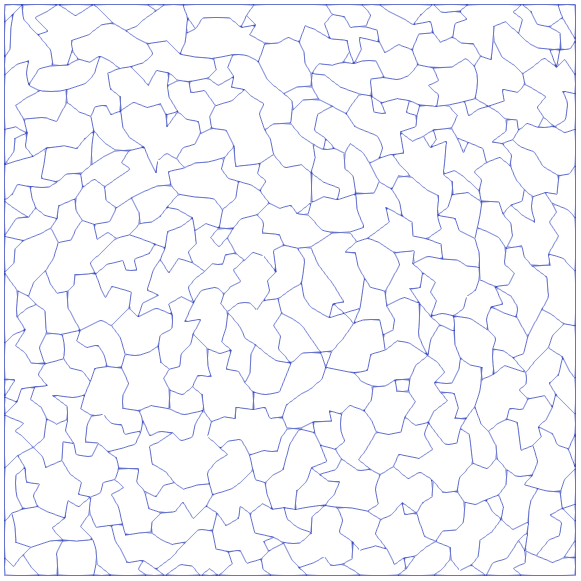}\\
&\mycaption{$\mathcal{T}_{J-2}$} & \mycaption{$\mathcal{T}_{J-2}$} & \mycaption{$\mathcal{T}_{J-2}$} & \mycaption{$\mathcal{T}_{J-2}$}\\
\rowname{}&
\includegraphics[width=0.23\textwidth]{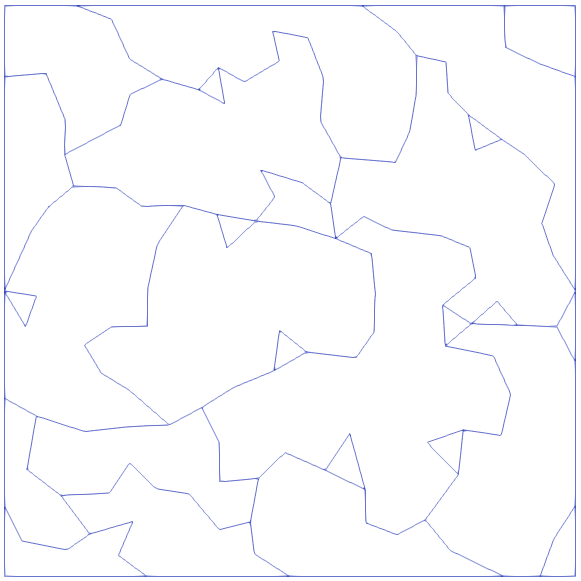}&
\includegraphics[width=0.23\textwidth]{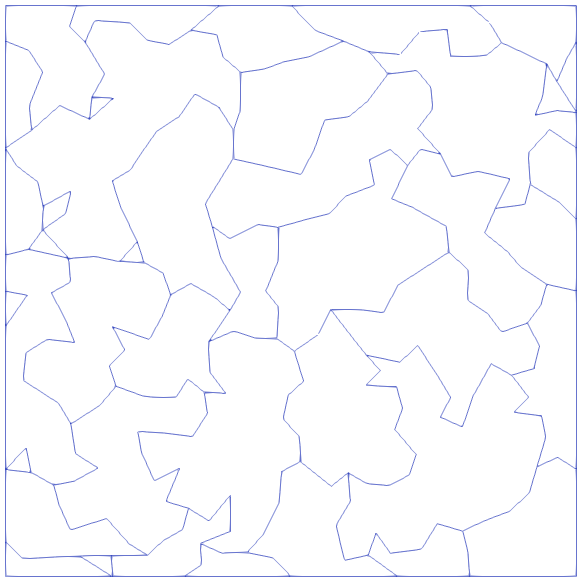}&
\includegraphics[width=0.23\textwidth]{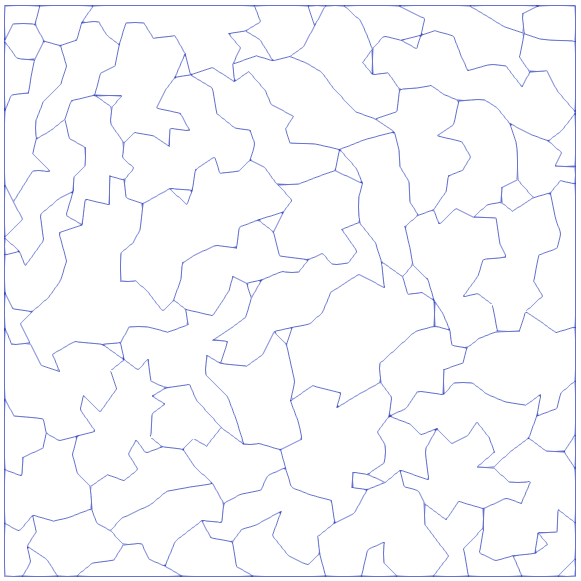}&
\includegraphics[width=0.23\textwidth]{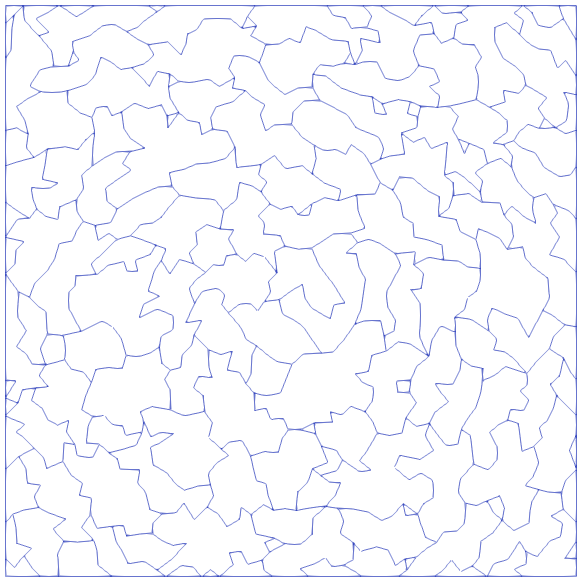}\\
&\mycaption{$\mathcal{T}_{1}$} & \mycaption{$\mathcal{T}_{1}$} & \mycaption{$\mathcal{T}_{1}$} & \mycaption{$\mathcal{T}_{1}$} \\
\end{tabular}
\ContinuedFloat
\caption{Sequences of agglomerated grids for testing the h-multigrid method. The corresponding fine grids $\mathcal{T}_{J}$ consist of 511 (a), 1034 (b), 1939 (c) and 3915 (d) elements, respectively.}%
\label{Meshes}
\end{figure}

In \cref{Num_cycles,Num_cycles2,Num_cycles_noninherited,Num_cycles2_noninherited}, we report the iteration counts (or cycles) needed to reduce the relative residual below the chosen tolerance and the computed convergence factor defined as
$$ \rho := \exp \Big ( \frac{1}{N} \ln \frac{\|\mathbf{r}_{N}\|_2}{\|\mathbf{r}_0\|_2} \Big),$$
where $\mathbf{r}_{N}$ and $\mathbf{r}_0$ are the final and the initial residual vectors, respectively. The number of iterations is presented as function of the number of levels and the number of smoothing steps. The results are shown for the two-level (TL), the W-cycle and the V-cycle multigrid. In particular, in \cref{Num_cycles,Num_cycles2}, we report the results obtained in case the inherited bilinear form is chosen, whereas in \cref{Num_cycles_noninherited,Num_cycles2_noninherited}, we report the results obtained in case the non-inherited bilinear form is selected. From the results of \cref{Num_cycles,Num_cycles2,Num_cycles_noninherited,Num_cycles2_noninherited}, we notice that for a given number of smoothing iterations $\nu$, the  number of iterations needed to reduce the relative residual below the fixed tolerance does not vary significantly with respect to the dimension of the underlying algebraic system, as predicted by  \cref{W_cycle_convergence,V_cycle_convergence}. Moreover, as expected,  the iteration counts decrease for larger values of $\nu$. From \cref{Num_cycles_noninherited,Num_cycles2_noninherited}, we further observe that the assumption on the number of smoothing steps needed to guarantee convergence in the case of non-inherited bilinear form does not seem to play a key role for the considered test case.

In \cref{Num_cycles}, for each set of tessellations, we report also the number of iterations N$_{\text{it}}^{\text{CG}}$ for the Conjugate Gradient (CG) method and the number of iterations N$_{\text{it}}^{\text{PCG}}$ for the Preconditioned Conjugate Gradient (PCG) method accelerated with a Modified Incomplete Cholesky with dual threshold precoditioner. The comparison shows that the proposed method outperforms both the CG and the PCG scheme in terms of number of iterations required to achieve convergence within the prescribed tolerance even for a small value $\nu$ of smoothing steps.

We observe that even if the agglomerated grids obtained by the considered coarsening strategy, in general, do not necessarily strictly satisfy the quasi-uniformity Assumption A\ref{quasi_uniform_mesh}, the numerical results agree with the theoretical expected behaviour. This is probably due to the use of a limited number of agglomeration levels. If a larger number of level $j$ is considered, ad hoc post-processing techniques can improve the quality of the meshes and enforce the satisfaction of Assumption A\ref{quasi_uniform_mesh}. This will be the object of further investigations.

\begin{table}
\begin{center}
\scriptsize
  \begin{tabular}{ l l l l l l l l l  l }
    \hline
    \multirow{2}{*}{} &
      \multirow{2}{*}{TL} &
      \multicolumn{2}{c}{W-cycle} & 
    \multirow{2}{*}{} &
      \multirow{2}{*}{TL} &
      \multicolumn{2}{c}{W-cycle} & \\
          & & 3 level & 4 level   & 
    & & 3 level & 4 level  \\
    \hline
    & \textbf{Set 1} & & &  & \textbf{Set 2} & & & \\
 $\nu = 2$ &  8 (0.092) & 8 (0.092) & 8 (0.092)  & &  9 (0.107) & 9 (0.109) & 9 (0.109)\\
 $\nu = 4$ & 6 (0.035)&  6 (0.036) & 6 (0.036) & &  6 (0.045)& 6 (0.046) & 6 (0.046)\\
 $\nu = 6$ & 5 (0.022) & 5 (0.022) & 5 (0.022)  & & 6 (0.027) & 6 (0.027) & 6 (0.027) \\
 $\nu = 8$ & 5 (0.017) & 5 (0.017) & 5 (0.017)  & &  5 (0.017) & 5 (0.017)  & 5 (0.018) \\
 \hline 
 &  N$^{\text{CG}}_{\text{it}} = 51 $,    & N$^{\text{PCG}}_{\text{it}} = 21$ & &&  N$^{\text{CG}}_{\text{it}} = 71$,  &   N$^{\text{PCG}}_{\text{it}} = 29$ & & \\
 \hline
    \multirow{2}{*}{} &
      \multirow{2}{*}{TL} &
      \multicolumn{2}{c}{W-cycle} & 
    \multirow{2}{*}{} &
      \multirow{2}{*}{TL} &
      \multicolumn{2}{c}{W-cycle} & \\
          & & 3 level & 4 level   & 
    & & 3 level & 4 level  \\
    \hline
    & \textbf{Set 3} & & &  & \textbf{Set 4} & & & \\
 $\nu = 2$ & 8 (0.093) & 8 (0.094) & 8 (0.094) & & 9 (0.105) & 9 (0.105) & 9 (0.105)\\
 $\nu = 4$ & 6 (0.033) & 6 (0.033) & 6 (0.033) & & 6 (0.038)  & 6 (0.038) & 6 (0.038) \\
 $\nu = 6$ & 5 (0.018) & 5 (0.018) & 5 (0.019) & & 5 (0.022) & 5 (0.022) & 5 (0.022) \\
 $\nu = 8$ & 5 (0.013) & 5 (0.013) & 5 (0.013) & & 5 (0.016)  & 5 (0.016) &  5 (0.016) \\
 \hline
 &  N$^{\text{CG}}_{\text{it}} = 102$,    & N$^{\text{PCG}}_{\text{it}} = 40$ &&  & N$^{\text{CG}}_{\text{it}} = 146$, & N$^{\text{PCG}}_{\text{it}} = 58$  & \\
 \hline
  \end{tabular}
  \caption{(\textbf{Inherited case}). Iteration counts and convergence factor (within parentheses) for both the two-level (TL) and the W-cycle algorithms as function of $\nu$ and for the W-cycle scheme as a function of the number of levels. The results are compared with the corresponding iteration counts of the CG/PCG methods. The sequence of agglomerated meshes is shown in Figure \ref{Meshes}.}
  \label{Num_cycles}
  \end{center}
\end{table}

\section{Conclusions}\label{Conclusions}
In this work we have proposed two-level, W-cycle and V-cycle geometric multigrid schemes on agglomeration-based nested polygonal grids and we have theoretically analysed their convergence. In particular, we have focused on the solution of the linear system stemming from a primal Virtual Element discretization of order $k=1$ of the Poisson equations. The novelty of our approach lies in exploiting the flexibility of VEM in dealing with rather general element shapes to generate nested sequences of  tessellations via  a geometric agglomeration procedure. However, the nestedness of the tessellation does not guarantee the nestedness of the virtual element spaces. This crucial aspect has asked for the use of the general BPX framework \cite{bramble1991analysis, duan2007generalized} for non-nested multigrid methods to prove that our multigrid schemes converge uniformly with respect to the mesh size and number of levels. In the case of non-inherited bilinear form the convergence of the  W-cycle scheme is obtained for a sufficiently large number of smoothing steps. Finally, we have validated the effectiveness of our algorithm though numerical experiments.

\begin{table}
\begin{center}
\scriptsize
  \begin{tabular}{ l l l l l l l l l  l }
    \hline
    \multirow{2}{*}{} &
     \multirow{2}{*}{} &
      \multicolumn{2}{c}{V-cycle} & 
    \multirow{2}{*}{} &
     \multirow{2}{*}{} &
      \multicolumn{2}{c}{V-cycle} & \\
          & & 3 level & 4 level   & 
    & & 3 level & 4 level  \\
    \hline
    & \textbf{Set 1} & & &  & \textbf{Set 2} & & & \\
 $\nu = 2$ &  &  9 (0.105) & 9 (0.128)  & & & 10 (0.132)  &  10 (0.150)\\
 $\nu = 4$ &  &  7 (0.050) & 7 (0.066)  & & & \ 7 (0.059)  & \ 8 (0.073)\\
 $\nu = 6$ &  &  6 (0.033) & 6 (0.046) & & & \ 6 (0.034)  & \ 7 (0.048)\\
 $\nu = 8$ &  &  5 (0.025) & 6 (0.034)  & & & \ 5 (0.023)  & \ 6 (0.035) \\
 \hline
    \multirow{2}{*}{} &
      \multirow{2}{*}{TL} &
      \multicolumn{2}{c}{V-cycle} & 
    \multirow{2}{*}{} &
      \multirow{2}{*}{TL} &
      \multicolumn{2}{c}{V-cycle} & \\
          & & 3 level & 4 level   & 
    & & 3 level & 4 level  \\
     \hline
 
    \hline
    & \textbf{Set 3} & &  &  & \textbf{Set 4} & & & \\
 $\nu = 2$ &  & 9 (0.114) & 11 (0.167) &&  & 9 (0.118) & 10 (0.151) \\
 $\nu = 4$ &  & 6 (0.046) & \ 8 (0.077)  &&  & 7 (0.049)  & \ 7 (0.067)  \\
 $\nu = 6$ &  & 6 (0.029) & \ 7 (0.048) &&  & 6 (0.030) & \ 6 (0.042) \\
 $\nu = 8$ &  & 5 (0.020) & \ 6 (0.035)  &&  & 5 (0.021) & \ 6 (0.030) \\
 \hline
  \end{tabular}
  \caption{(\textbf{Inherited case}). Iteration counts and convergence factor (within parentheses) for the  V-cycle scheme as function of $\nu$ and of the number of levels. The sequence of agglomerated meshes is shown in  Figure \ref{Meshes}.}
  \label{Num_cycles2}
  \end{center} 
\end{table}

\begin{table}[H]
\begin{center}
\scriptsize
  \begin{tabular}{ l l l l l l l l l  l }
    \hline
    \multirow{2}{*}{} &
      \multirow{2}{*}{TL} &
      \multicolumn{2}{c}{W-cycle} & 
    \multirow{2}{*}{} &
      \multirow{2}{*}{TL} &
      \multicolumn{2}{c}{W-cycle} & \\
          & & 3 level & 4 level   & 
    & & 3 level & 4 level  \\
    \hline
    & \textbf{Set 1} & & &  & \textbf{Set 2} & & & \\
 $\nu = 2$ & 8 (0.0815) & 8 (0.0817) & 8 (0.0818)  &&  8 (0.0967) & 8 (0.0974) &  8 (0.0975)\\
 $\nu = 4$ & 6 (0.0325) & 6 (0.0327)   & 6 (0.0327)  && 6 (0.0397)  & 6 (0.0399) & 6 (0.0399) \\
 $\nu = 6$ & 5 (0.0206) &    5 (0.0207) & 5 (0.0207) && 5 (0.0207)  & 5 (0.0208) & 5 (0.0208) \\
 $\nu = 8$ & 5 (0.0151) &   5 (0.0151)  & 5 (0.0151) && 5 (0.0123)   & 5 (0.0123) & 5 (0.0123) \\
 \hline 
    \multirow{2}{*}{} &
      \multirow{2}{*}{TL} &
      \multicolumn{2}{c}{W-cycle} & 
    \multirow{2}{*}{} &
      \multirow{2}{*}{TL} &
      \multicolumn{2}{c}{W-cycle} & \\
          & & 3 level & 4 level   & 
    & & 3 level & 4 level  \\
    \hline
    & \textbf{Set 3} & & &  & \textbf{Set 4} & & & \\
 $\nu = 2$ & 8 (0.0885) & 8 (0.0889) & 8 (0.0890)  &&  8 (0.0958) & 8 (0.0958)  & 8 (0.0958)\\
 $\nu = 4$ & 6 (0.0299) & 6 (0.0300) & 6 (0.0300) &&  6 (0.0340) & 6 (0.0341) & 6 (0.0341) \\
 $\nu = 6$ & 5 (0.0160)  & 5 (0.0161)  & 5 (0.0161)  && 5 (0.0189)  & 5 (0.0189)  & 5 (0.0189)\\
 $\nu = 8$ & 5 (0.0112)   & 5 (0.0112) & 5 (0.0112)  &&  5 (0.0126) & 5 (0.0126)   & 5 (0.0126)\\
 \hline
  \end{tabular}
  \caption{(\textbf{Non-inherited case}). Iteration counts and convergence factor (within parenthesis) for both the two-level (TL) and the W-cycle algorithms as function of $\nu$ and for the W-cycle scheme as function of the number of levels. The sequence of agglomerated meshes is shown in Figure \ref{Meshes}.}
  \label{Num_cycles_noninherited}
  \end{center}
\end{table}

\begin{table}[H]
\begin{center}
\scriptsize
  \begin{tabular}{ l l l l l l l l l  l }
    \hline
    \multirow{2}{*}{} &
     \multirow{2}{*}{} &
      \multicolumn{2}{c}{V-cycle} & 
    \multirow{2}{*}{} &
     \multirow{2}{*}{} &
      \multicolumn{2}{c}{V-cycle} & \\
          & & 3 level & 4 level   & 
    & & 3 level & 4 level  \\
    \hline
    & \textbf{Set 1} & & &  & \textbf{Set 2} & & & \\
 $\nu = 2$ & & 8 (0.0924) & 9 (0.1112)    & & &  9 (0.1101) & 9 (0.1259) \\
 $\nu = 4$ & & 6 (0.0421) & 7 (0.0563)      & & &  6 (0.0459) & 7 (0.0600)\\
 $\nu = 6$ & & 6 (0.0273) & 6 (0.0351)    & & & 5 (0.0249) & 6 (0.0375)\\
 $\nu = 8$ & & 5 (0.0197) & 5 (0.0238)   & & & 5 (0.0161) & 6 (0.0267) \\
 \hline
    \multirow{2}{*}{} &
      \multirow{2}{*}{TL} &
      \multicolumn{2}{c}{V-cycle} & 
    \multirow{2}{*}{} &
      \multirow{2}{*}{TL} &
      \multicolumn{2}{c}{V-cycle} & \\
          & & 3 level & 4 level   & 
    & & 3 level & 4 level  \\
     \hline
 
    \hline
    & \textbf{Set 3} & &  &  & \textbf{Set 4} & & \\
 $\nu = 2$ & & 8 (0.0989) & 9 (0.1275)  &&& 8 (0.0972) &  9 (0.1218)  \\
 $\nu = 4$ & & 6 (0.0369) & 7 (0.0619)  &&& 6 (0.0370)  &  7 (0.0565)   \\
 $\nu = 6$ & & 5 (0.0219)& 6 (0.0401) &&& 5 (0.0220) &  6 (0.0352)\\
 $\nu = 8$ & & 5 (0.0162) & 6 (0.0298) &&& 5 (0.0158)  & 6 (0.0256)  \\
 \hline
  \end{tabular}
  \caption{(\textbf{Non-inherited case}). Iteration counts and convergence factor (within parenthesis) for the  V-cycle scheme as function of $\nu$ and of the number of levels. The sequence of agglomerated meshes is shown in Figure \ref{Meshes}.}
  \label{Num_cycles2_noninherited}
  \end{center}
\end{table}

\section*{Acknowledgements}
S. Berrone and M. Busetto acknowledge  that   the   present   research   was   partially   supported   by   MIUR Grant-Dipartimenti di Eccellenza 2018-2022 n. E11G18000350001.  P.F. Antonietti, S.Berrone and M. Verani have been partially funded by MIUR PRIN research grants n. 201744KLJL and n. 20204LN5N5. P.F. Antonientti, S. Berrone, M. Busetto and M. Verani are members of INdAM GNCS.

\vspace{1cm}
\footnotesize
\centering REFERENCES
\vspace{0.5cm}
\bibliography{references.bib}

\end{document}